\definecolor{lgray}{gray}{0.9}
\newcommand\ifpdf
\newcommand\ifpdf
\DeclareMathOperator    \aff                    {aff}
\DeclareMathOperator    \argmax         {arg\,max}
\DeclareMathOperator    \cl                     {cl}
\DeclareMathOperator    \conv           {conv}
\DeclareMathOperator    \cone           {cone}
\DeclareMathOperator    \dist           {dist}
\DeclareMathOperator    \im                    {im}
\DeclareMathOperator    \intr                   {int}
\DeclareMathOperator    \rank                {rank}
\DeclareMathOperator    \relint         {rel\,int}
\DeclareMathOperator    \verts          {vert}
\newcommand{\old}[1]{{}}
\newcommand{\bb}{\mathbb}
\newcommand{\R}{\bb R}
\newcommand{\Q}{\bb Q}
\newcommand{\Z}{\bb Z}
\newcommand{\N}{\bb N}
\newcommand{\shiftedp}{\bar{p}}
\newcommand{\y}{\bar{y}}			
\renewcommand{\a}{\bar{a}}		
\newcommand\st{\mid}
\newcommand\bigst{\mathrel{\big|}}
\newcommand\IH{{\mathrm{I}}}
\newtheorem{theorem}{Theorem}[section]
\newtheorem{corollary}[theorem]{Corollary}
\newtheorem{lemma}[theorem]{Lemma}
\newtheorem{proposition}[theorem]{Proposition}
        \newtheorem{prop}[theorem]{Proposition}
        \newtheorem{obs}[theorem]{Observation}
\newtheorem{definition}[theorem]{Definition}
\def\th@claim{%
  \let\thm@indent\indent 
  \thm@headfont{\itshape}%
  \normalfont 
  \thm@preskip\topsep \divide\thm@preskip2
  \thm@postskip\thm@preskip
}
\theoremstyle{claim}
\newtheorem{claim}{Claim}
\newcommand{\greek}[1]{%
  \expandafter\@greek\csname c@#1\endcsname } 
\newcommand{\@greek}[1]{$\ifcase#1\or\alpha\or\beta\or\gamma\or\delta\or\varepsilon
  \or\zeta\or\eta\or\theta\or\iota\or\kappa\or\lambda \or\mu\or\nu\or\xi\or
  o\or\pi\or\varrho\or\sigma \or\tau\or\upsilon\or\phi\or\chi\or\psi\or\omega
  \else\@ctrerr\fi$} 
\renewenvironment{proof}[1][\proofname]{\par
  \pushQED{\qed}%
  \normalfont \topsep6\p@\@plus6\p@\relax
  \trivlist
  \item[\hskip\labelsep
        \itshape\bfseries
    #1\@addpunct{.}]\ignorespaces
}{%
  \popQED\endtrivlist\@endpefalse
}
\newenvironment{claimproof}[1][\proofname]{\par
  \pushQED{\qedsymbol}%
  \normalfont \topsep1\p@\@plus1\p@\relax
  \trivlist
  \item[\hskip2em
        \itshape
    #1\@addpunct{.}]\ignorespaces
}{%
  \popQED\endtrivlist\@endpefalse
}
\newcommand\Step[1]{\textbf{Step~#1.}
  \def\@currentlabel{#1}}
\newcommand{\Rf}{R_f}           
\newcommand{\T}{\mathcal{T}}                            
\newcommand{\Y}{\mathcal{Y}}                    
\renewcommand{\Psi}{\gamma}
\newcommand{\B}{\mathfrak B}
\newcommand{\M}{M}
\newcommand{\Ny}{\mathcal N(\Y)}
\newcommand{\Ty}{\T(\Y)}
\newcommand{\rvec}[1]{r^{j_#1}}
\newcommand{\pvec}[1]{p^{j_#1}}
\def\st{\,|\,}
\renewcommand{\texttt}[1]{{\tt{\textcolor{blue}{#1}}}}
\begin{document}

\title{The Triangle Closure is a Polyhedron}

\author{
Amitabh Basu\thanks{Dept. of Mathematics, University of California, Davis, 
{\tt{abasu@math.ucdavis.edu}}} \and
Robert Hildebrand\thanks{Dept. of Mathematics, University of California, Davis,
{\tt{rhildebrand@math.ucdavis.edu}}} \and
Matthias K\"oppe\thanks{Dept. of Mathematics, University of California, Davis, 
{\tt{mkoeppe@math.ucdavis.edu}} }
}

\date{\today\thanks{$\relax$Revision: 716 $ - \ $Date: 2013-01-08 16:15:15 -0700 (Tue, 08 Jan 2013) $ $}}

\maketitle

\begin{abstract}
Recently, cutting planes derived from maximal lattice-free convex sets have
been studied intensively by the integer programming community. An important
question in this research area has been to decide whether the closures
associated with certain families of lattice-free sets are polyhedra. For a
long time, the only result known was the celebrated theorem of Cook, Kannan
and Schrijver who showed that the split closure is a polyhedron. Although some
fairly general results were obtained by Andersen, Louveaux and Weismantel
[{\em An analysis of mixed integer linear sets based on lattice point free
  convex sets}, Math. Oper. Res. \textbf{35} (2010), 233--256]
and Averkov [\emph{On finitely generated closures in the theory of cutting
  planes}, Discrete Optimization \textbf{9} (2012), no.~4, 209--215], some basic questions have
remained unresolved. For example, maximal lattice-free triangles are the
natural family to study beyond the family of splits and it has been a standing
open problem to decide whether the triangle closure is a polyhedron. In this
paper, we show that when the number of integer variables $m=2$ the triangle
closure is indeed a polyhedron and its number of facets can be bounded by a
polynomial in the size of the input data. The techniques of this proof
are also used to give a refinement of necessary conditions for valid
inequalities being facet-defining due to Cornu\'ejols and
Margot [\emph{On the facets of mixed integer programs
  with two integer variables and two constraints}, Mathematical Programming
\textbf{120} (2009), 429--456] and obtain polynomial complexity results about
the mixed integer hull. 
\end{abstract}

\section{ Introduction}

We study the following system, introduced by Andersen et al.~\cite{alww}:
\begin{equation}
\begin{aligned}\label{M_fk}
&x  =  f + \sum_{j = 1}^k r^js_j \\
&x  \in  \mathbb{Z}^m \\
&s_j \geq  0 \quad \textrm{for all } j=1, \dots, k.
\end{aligned}
\end{equation}

This model has been studied extensively with the purpose of providing a unifying theory for cutting planes and exploring new families of cutting planes~\cite{alww, bbcm, rohtua,bc, cm, dl, dw2008}.
In this theory, an interesting connection is explored between valid inequalities for the convex hull of solutions to~\eqref{M_fk} (the \emph{mixed integer hull}) and maximal lattice-free convex sets in $\R^m$. A {\em lattice-free} convex set is a convex set that does not contain any integer point in its interior. A~{\em maximal} lattice-free convex set is a lattice-free convex set that is maximal with respect to set inclusion. Since the $x$ variables are uniquely determined by the $s_j$ variables, only the values of the $s_j$ variables need to be recorded for the system~\eqref{M_fk}, as done with the following notation:

\begin{equation}\label{eq:R_f}
R_f = \{ s \in \R^k_+ \st f + \sum_{j = 1}^k r^js_j \in \Z^m\}
\end{equation}
where $\R^k_+$ denotes the nonnegative orthant in $\R^k$. The mixed integer
hull is then denoted by $\conv(R_f)$ and can be obtained by intersecting all
valid inequalities derived using the Minkowski functional of maximal
lattice-free convex sets containing $f$ in their interior \cite{alww, bal,
  corner_survey, zambelli}. 
We explain this more precisely after introducing some notation.

Let $B \in \R^{n\times m}$ be a matrix with $n$ rows $b^1,\dots,b^n\in\R^m$.
We write $B = (b^1; \dots; b^n)$. Let 
\begin{equation}\label{eq:M(B)}
M(B) =\{\,x \in \R^m \st B\cdot (x-f) \leq e\,\},
\end{equation}
where $e$ is the vector of all ones. This is a polyhedron with $f$ in its
interior. We will denote the set of its vertices by $\verts(B)$. In fact, any
polyhedron with $f$ in its interior can be given such a description. We will
mostly deal with matrices $B$ such that $M(B)$ is a maximal lattice-free
convex set in $\R^m$.  
The Minkowski functional for the set $M(B)$ can be defined as $$\psi_B(r) = \max_{i \in \{1,\ldots, n\}} b^i\cdot r \quad\text{for $r
  \in \R^m$}.$$ 

\begin{prop}
If $B \in \R^{n\times m}$ is a matrix such that $M(B)$ is a lattice-free
convex set in $\R^m$ with $f$ in its interior, then the inequality $\sum_{j=1}^k\psi_B(r^j)s_j \geq 1$ is a valid inequality for~\eqref{M_fk}.
\end{prop}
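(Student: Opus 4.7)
The proof is short and rests on a single observation: integer points must lie outside the interior of the lattice-free set $M(B)$, and this translates directly into the desired inequality via the Minkowski functional.

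The plan is as follows. Let $(x,s)$ be any feasible solution to \eqref{M_fk}, so $x \in \Z^m$, $s \geq 0$, and $x - f = \sum_{j=1}^k r^j s_j$. Since $M(B)$ is lattice-free, the integer point $x$ cannot lie in $\intr(M(B))$. Unpacking the description \eqref{eq:M(B)}, this means there exists some index $i \in \{1,\dots,n\}$ such that $b^i \cdot (x-f) \geq 1$. Substituting the expression for $x-f$ gives
\begin{equation*}
\sum_{j=1}^k (b^i \cdot r^j) \, s_j \;\geq\; 1.
\end{equation*}

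Next, I would use the fact that $\psi_B(r^j) = \max_{i'} b^{i'} \cdot r^j \geq b^i \cdot r^j$ for every $j$, together with the nonnegativity $s_j \geq 0$, to deduce $\psi_B(r^j)\, s_j \geq (b^i \cdot r^j)\, s_j$ termwise. Summing over $j$ and chaining with the previous inequality yields
\begin{equation*}
\sum_{j=1}^k \psi_B(r^j)\, s_j \;\geq\; \sum_{j=1}^k (b^i \cdot r^j)\, s_j \;\geq\; 1,
\end{equation*}
which is exactly what needs to be shown.

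There is essentially no obstacle; the only subtlety to check is that the case $x = f$ does not arise as a counterexample, but this is automatic: $f \in \intr(M(B))$ and $M(B)$ lattice-free together force $f \notin \Z^m$, so a feasible $x \in \Z^m$ cannot equal $f$. In any event, the argument above does not actually need $x \neq f$, since if $x - f = 0$ then $\sum_j r^j s_j = 0$ and the existence of $i$ with $b^i \cdot 0 \geq 1$ would fail---confirming again that this degenerate scenario is vacuous under the standing hypotheses.
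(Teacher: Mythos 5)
Your proof is correct and follows essentially the same route as the paper's: pick an index $i$ with $b^i\cdot(x-f)\geq 1$ (the paper picks the maximizer of $\psi_B(x-f)$, which is one such index), substitute $x-f=\sum_j r^j s_j$, and then bound termwise using $\psi_B(r^j)\geq b^i\cdot r^j$ together with $s_j\geq 0$. The closing remark about $x=f$ is an unnecessary but harmless aside; the main argument is sound.
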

\begin{proof}
Let $s \in R_f$.  Then $x = f + \sum_{j=1}^k r^j s_j \in \Z^m$.  Consider $\psi_B(x - f)$ and let $b \in \{b^1, \dots, b^n\}$ such that $\psi_B(x-f) = b \cdot (x-f)$.  Since $x \in \Z^m$ and $M(B)$ is lattice-free, by definition of $M(B)$, we have $b \cdot (x - f) \geq 1$.  Thus
$$
1 \leq b \cdot (x-f) = b \cdot \sum_{j=1}^k r^j s_j  = \sum_{j=1}^k(b \cdot r^j) s_j \leq \sum_{j=1}^k \psi_B(r^j) s_j.
$$
Therefore, the inequality $\sum_{j=1}^k\psi_B(r^j)s_j \geq 1$ holds for~$s$.
Since $s \in R_f$ was chosen arbitrarily, it is a valid inequality
for~\eqref{M_fk}. 
\end{proof}

We define the vector of coefficients as
\[
\gamma(B) = (\psi_B(r^j))_{j=1}^k
\]
and therefore can write the mixed integer hull as 
\begin{equation}\label{eq:mixed-int-hull-by-lattice-free}
  \conv(R_f) = \bigg\{\,s
\in \R^k_+ \mathrel{\bigg|} \begin{array}{@{}l@{}} \gamma(B)\cdot s \geq 1
  \textrm{ for all } B\in\R^{n\times m}\textrm{ such that } \\ M(B)\textrm{ is a maximal }\textrm{lattice-free convex set }\end{array}\,\bigg\}.
\end{equation}

\paragraph{Motivation.} All maximal lattice-free convex sets are polyhedra~\cite{rohtua, lovasz}. 
The most primitive type of maximal lattice-free convex set in $\R^m$ is the {\em split}, which is of the form $\pi_0 \leq \pi\cdot x \leq \pi_0 + 1$ for some $\pi \in \Z^m$ and $\pi_0 \in \Z$. A famous theorem due to Cook, Kannan and Schrijver~\cite{cook-kannan-schrijver} implies that the intersection of all valid inequalities for~\eqref{M_fk} derived from splits, known as the \emph{split closure}, is a polyhedron. 
The split closure result has been used repeatedly as a theoretical as well as practical tool in many diverse settings within the integer programming community. This motivates the following question: For which families of lattice-free convex sets is the associated closure a polyhedron?
 Not much was known about this question until very recently when some elegant results of a more general nature were obtained in~\cite{andersen-louveaux-weismantel} and~\cite{averkov}. Even so, some basic questions have remained open. Consider the case $m=2$. For this case, the different types of maximal lattice-free convex sets have been classified quite satisfactorily. Lov\'asz characterized the maximal lattice-free convex sets in $\R^2$ as follows.


\begin{theorem}[Lov\'asz \cite{lovasz}]
\label{mlfcb2}
In the plane, a maximal lattice-free convex set with non-empty interior is one of the following:
\begin{enumerate}[\rm(a)]
\item A split $c \leq a x_1 + b x_2 \leq c+1$ where $a$ and $b$ are co-prime integers and $c$ is an integer;
\item A triangle with an integral point in the interior of each of its edges;
\item A quadrilateral containing exactly four integral points, with exactly one of them in the interior of each of its edges.  Moreover, these four integral points are vertices of a parallelogram of area 1.
\end{enumerate}
\end{theorem}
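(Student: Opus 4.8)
The plan is to follow the classical three-step route: (i)~show that such a set $K$ is a polyhedron whose combinatorics are tightly constrained; (ii)~bound its number of facets by four, using the integer points that maximality forces onto the facets; (iii)~go through the resulting shapes --- strip, triangle, quadrilateral --- the last being the delicate one.

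\emph{Reductions.} Since $\intr(\cl K)=\intr K$ for a convex set, $\cl K$ is again lattice-free, so maximality gives $K=\cl K$; thus $K$ is closed, and (using the known fact recalled above, or rederiving it as a first step) $K$ is a polyhedron, say $K=M(B)$ with the rows of $B$ indexing the facets $F_1,\dots,F_t$. I will also use the standard fact that the recession cone of a maximal lattice-free convex set is a linear subspace (otherwise $K$ contains a translate of a $2$-dimensional cone, hence an interior integer point, or can be enlarged along a recession ray); in $\R^2$ this cone is thus $\{0\}$ or a line. If it is a line, $K$ is a strip between two parallel lattice lines $\{x:\pi\cdot x=c\}$ and $\{x:\pi\cdot x=c'\}$; if $\pi$ were not parallel to an integer vector, $\{\pi\cdot z:z\in\Z^2\}$ would be dense in $\R$ and $K$ would contain an interior integer point, so $\pi$ may be normalized to have coprime integer entries, and then lattice-freeness together with maximality forces $c\in\Z$ and $c'=c+1$ --- this is case~(a).

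\emph{Bounding the facets.} Now assume $K$ is bounded, hence a polygon with edges $F_1,\dots,F_t$ in cyclic order. Maximality forces an integer point $z_i\in\relint F_i$ on each edge: if some $F_i$ contained none, pushing its supporting line outward by $\varepsilon>0$ would give $K_\varepsilon\supsetneq K$, which by maximality is not lattice-free for any $\varepsilon$; since the $K_\varepsilon$ (for $\varepsilon\le1$) are uniformly bounded, only finitely many integer points are involved, so one of them lies in $\intr K_\varepsilon$ for arbitrarily small $\varepsilon$, hence in $\bigcap_{\varepsilon>0}\intr K_\varepsilon=\intr K\cup\relint F_i$, and since $K$ is lattice-free this point lies in $\relint F_i$. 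Distinct edges receive distinct points $z_i$. Put $Q:=\conv\{z_1,\dots,z_t\}$, so $\intr Q\subseteq\intr K$ and $Q$ is a lattice-free lattice polygon. Each $z_i$ is in fact a \emph{vertex} of $Q$: for $i\ne j$ the open chord $(z_i,z_j)$ can meet $\partial K$ only if the line through $z_i$ and $z_j$ contains an edge of $K$, which would force the distinct edges $F_i,F_j$ to coincide; hence $(z_i,z_j)\subseteq\intr K$, so $z_i$ lies neither in $\intr Q$ (lattice-freeness) nor in the relative interior of an edge of $Q$ (which is one of these chords). Therefore $t\le\#\verts(Q)\le4$, the last inequality being the classical fact that a lattice-free lattice polygon in the plane has at most four vertices (it either lies between two consecutive parallel lattice lines, with at most two vertices on each, or is unimodularly equivalent to $\conv\{(0,0),(2,0),(0,2)\}$).

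\emph{The cases.} A bounded $2$-dimensional polyhedron has at least three facets, so only $t=3$ and $t=4$ remain. If $t=3$, $K$ is a triangle with an integer point in the relative interior of each edge --- case~(b). If $t=4$, I claim $K\cap\Z^2=\{z_1,z_2,z_3,z_4\}$: a fifth integer point $w$ lies on $\partial K$, cannot lie in $Q$ (else $w$ lies on a chord $(z_i,z_j)\subseteq\intr K$), and a short case analysis of the segments $(z_i,z_j)$ and $(z_i,w)$ (using the chord observation above and the incidences of $w$ to the edges of $K$) shows that adding $w$ keeps all of $z_1,\dots,z_4$ as vertices; thus $\conv\{z_1,z_2,z_3,z_4,w\}$ would be a lattice-free lattice polygon with five vertices, contradicting the four-vertex bound. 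Hence $Q$ is a lattice quadrilateral whose only lattice points are its four vertices, so Pick's formula gives $\mathrm{area}(Q)=0+\tfrac42-1=1$; splitting $Q$ along each diagonal exhibits four sub-triangles, each a lattice triangle with only its vertices as lattice points and hence unimodular, so with $a=z_2-z_1$, $b=z_4-z_1$, $c=z_3-z_1$ one gets $|\det(a,c)|=|\det(c,b)|=|\det(a,b)|=1$, forcing $c=\pm a\pm b$, and convexity of $Q$ in the cyclic order $z_1,z_2,z_3,z_4$ singles out $c=a+b$, i.e.\ $z_1+z_3=z_2+z_4$: $Q$ is a parallelogram of area $1$, with one vertex in the relative interior of each edge of $K$ and no other integer points in $K$ --- case~(c). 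The main obstacle is exactly this quadrilateral case: both the four-vertex bound (used twice) and the verification that a maximal lattice-free quadrilateral admits no fifth integer point, the latter requiring a careful bookkeeping of which chords among the $z_i$ and $w$ pass through $\intr K$.
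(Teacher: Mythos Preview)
The paper does not prove this theorem; it is quoted as a background result of Lov\'asz and cited without proof, so there is no in-paper argument to compare your proposal against.

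For what it is worth, your argument follows the standard route and is correct. The step you flag as delicate --- ruling out a fifth lattice point~$w$ when $t=4$ --- goes through using exactly the chord observation you already have. If some $z_i$ failed to be a vertex of $\conv\{z_1,\dots,z_4,w\}$, then $z_i$ lies either on an open segment $(a,b)$ or in the interior of a triangle on points $a,b,c$ from $\{z_j:j\ne i\}\cup\{w\}$. In the segment case, either $a,b$ lie on distinct edges of~$K$, so $(a,b)\subseteq\intr K$ and $z_i\in\intr K$; or $a,b$ share an edge~$F_j$, whence $z_i\in F_j$ forces $j=i$, impossible since $z_l\notin F_i$ for $l\ne i$ and $a,b$ cannot both equal~$w$. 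In the triangle case, at least one pair among $a,b,c$ lies on distinct edges (else the three are collinear), so writing $z_i$ on the open segment from the remaining vertex to a point of the opposite open side places that side in $\intr K$, and hence $z_i\in\intr K$. Either way $z_i\in\intr K$, a contradiction; so all five points are vertices, contradicting the four-vertex bound for lattice-free lattice polygons that you invoke.
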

Following Dey and Wolsey \cite{dw2008}, the maximal lattice-free triangles can be further partitioned into
three canonical types:
\begin{itemize}
\item \emph{Type 1 triangles}: triangles with integral vertices and exactly one integral point in the
relative interior of each edge;
\item \emph{Type 2 triangles}: triangles with at least one fractional vertex $v$, exactly one integral
point in the relative interior of the two edges incident to $v$ and at least two integral
points on the third edge;
\item \emph{Type 3 triangles}: triangles with exactly three integral points on the boundary, one in
the relative interior of each edge.
\end{itemize}

Figure \ref{fig:lattice_free_sets} shows these three types of triangles as well as a maximal lattice-free quadrilateral and a split satisfying the properties of Theorem \ref{mlfcb2}.

\begin{figure}
\label{fig:lattice_free_sets}
\centering
\scalebox{.5}{%
\ifpdf
\includegraphics{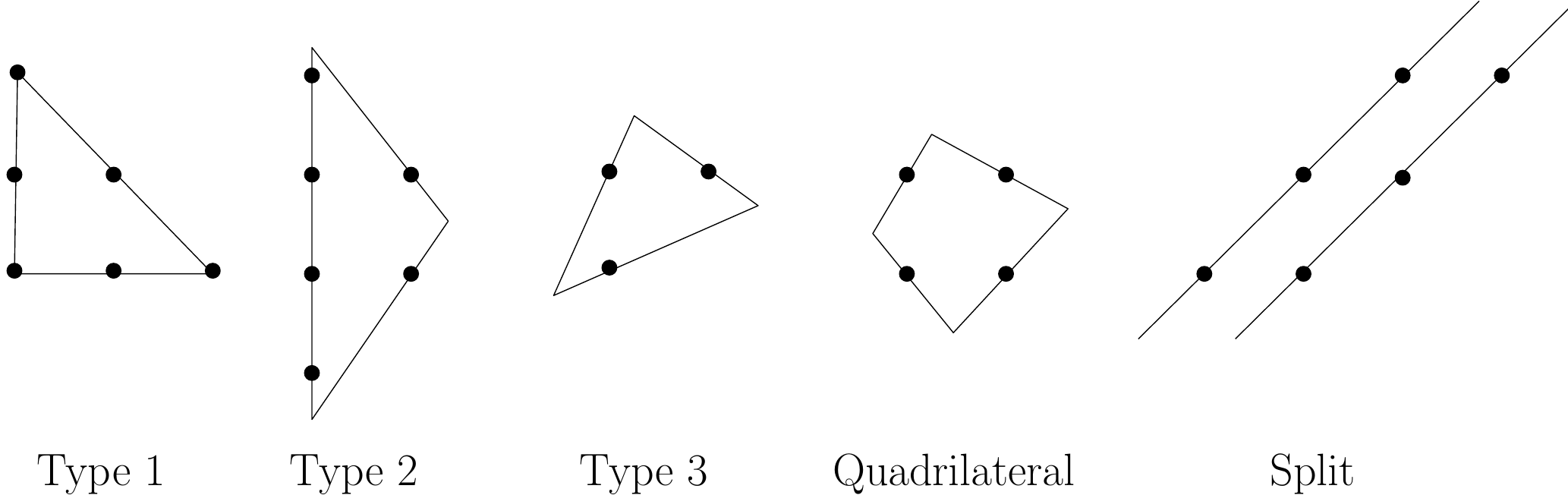}
\else
\includegraphics{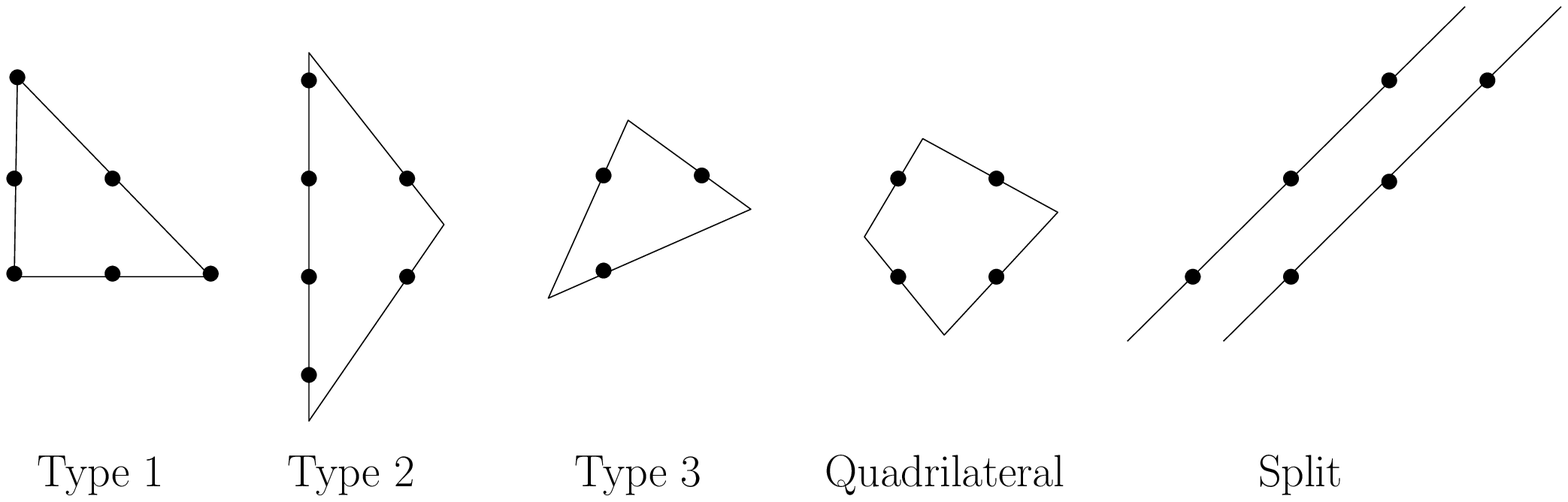}
\fi
}
%
\caption{Types of maximal lattice-free convex sets in $\R^2$.}
\end{figure}

For this simple case of $m=2$, it was not even known whether the triangle closure (the convex set formed by the intersection of all inequalities derived from maximal lattice-free triangles) is a polyhedron. The results from~\cite{andersen-louveaux-weismantel} and~\cite{averkov} cannot be used as they use an assumption of the so-called bounded lattice-width, which is not applicable here. In this paper, we settle this question in the affirmative under the assumption of rationality of all the data. The techniques used are substantially different from those in~\cite{andersen-louveaux-weismantel} and~\cite{averkov}. 


\label{sec:gen-lift}

\paragraph{Statement of Results.} Given a matrix $B \in \R^{3 \times 2}$, if $M(B)$ is a lattice-free set, then it will be either a triangle or a split in $\R^2$ (not necessarily maximal); the latter case occurs when one row of~$B$ is a scaling of another row. 

We define the {\em split closure}  as 
$$S = \{\,s \in \R^k_+ \st \gamma(B)\cdot s \geq 1 \textrm{ for all } B\in\R^{3\times 2}\textrm{ such that }M(B)\textrm{ is a lattice-free split\,}\}.$$
 Note that we are using a redundant description of convex sets that are splits, i.e., using 3 inequalities to describe it, instead of the standard 2 inequalities.
It follows from the result of Cook, Kannan and Schrijver~\cite{cook-kannan-schrijver}  that the split closure is a polyhedron.  We are interested in the closure using all inequalities derived from lattice-free triangles.

We define the \emph{triangle closure}, first defined in~\cite{bbcm}, as 
\[
T = \{\,s \in \R^k_+ \st \gamma(B)\cdot s \geq 1 \textrm{ for all } B\in\R^{3\times 2}\textrm{ such that }M(B)\textrm{ is a lattice-free triangle}\,\}.
\]
It is proved in~\cite{bbcm} that $T \subseteq S$, and therefore, $T = T \cap S$. This is because we can write a sequence of triangles whose limit is a split, and therefore all split inequalities are limits of triangle inequalities. Hence, using the fact that $T=T\cap S$, we can write the triangle closure as 
\begin{equation}\label{eq:generic_rep}
T= \{\,s \in \R^k_+ \st \gamma(B)\cdot s \geq 1 \textrm{ for all } B\in\R^{3\times 2}\textrm{ such that }M(B)\textrm{ is lattice-free\,}\}.
\end{equation}
The reason we  describe split sets using 3 inequalities is to write the triangle closure in a uniform manner using $3\times 2$ matrices as in \eqref{eq:generic_rep}. We note here that in the definition of $T$, we do not insist that the lattice-free set $M(B)$ is maximal.

We will prove the following theorem.

\begin{theorem}\label{thm:main}
Let $m=2$. Suppose that the data in~\eqref{M_fk} is rational, i.e., $f \in \Q^2$ and $r^j \in \Q^2$ for all $j =1, \ldots, k$. Then the triangle closure $T$ is a polyhedron with only a polynomial number of facets with respect to the binary encoding sizes of $f, r^1, \dots, r^k$.

\end{theorem}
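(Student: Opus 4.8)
The plan is to show that only finitely many lattice-free triangles $M(B)$ give rise to ``useful'' inequalities $\gamma(B)\cdot s \geq 1$, and that finitely many of those suffice, with the count bounded polynomially in the encoding size. The first step is to fix a reference object: I would take the split closure $S$, which by Cook--Kannan--Schrijver is a rational polyhedron with polynomially many facets (one should check the known bounds on the encoding size of its facet description, and in particular that its recession cone equals that of $\overline{R_f}=\{\,s\in\R^k_+ \st \sum_j r^j s_j = 0 \pmod{\text{lattice}}\,\}$ appropriately interpreted). Since $T = T \cap S \subseteq S$, every point of $S \setminus T$ must be cut off by some triangle inequality, and it suffices to understand, for each candidate point, which triangles do the cutting. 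The key geometric fact I would exploit is the classification in Theorem~\ref{mlfcb2} together with the Dey--Wolsey typing: a maximal lattice-free triangle in $\R^2$ is determined by a bounded amount of combinatorial data (which integer points lie on which edges), and its vertices are rational with denominators controlled by the ``spread'' of those integer points.

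The core of the argument is a \emph{boundedness / quantization} lemma: I claim that if a triangle inequality $\gamma(B)\cdot s \geq 1$ is not implied by $S$ together with the already-considered triangle inequalities, then the triangle $M(B)$ cannot be too ``large'' or too ``thin'' relative to $f$ and the rays $r^j$. Concretely, I would argue that a very large or very elongated maximal lattice-free triangle has its Minkowski functional $\psi_B$ pointwise close (on the relevant rays, after normalization) to that of a split, so its inequality is within $\varepsilon$ of a split inequality and hence dominated on $S$ once we pass to the closure — this is exactly the mechanism behind $T \subseteq S$ quoted from \cite{bbcm}, and I would make it quantitative. That reduces attention to triangles whose vertices lie in a bounded region (a box whose size is polynomial in the data), and whose defining integer points therefore come from a bounded set; combined with the rationality of $f$ and the $r^j$, the set of ``relevant'' normalized coefficient vectors $\gamma(B)$ is then seen to lie in a bounded region and to be semialgebraic of bounded description, so that the intersection of the corresponding halfspaces is a polyhedron.

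To get the \emph{polynomial} facet bound rather than mere finiteness, I would not enumerate triangles directly but instead parametrize them: up to the finitely many combinatorial types, a triangle is given by continuous parameters (the locations of its vertices subject to the constraint that the prescribed integer points sit on the prescribed edges), and $\gamma(B)$ depends on these parameters piecewise-rationally. The facets of $T$ contributed by a single combinatorial type then correspond to the ``lower envelope'' of this parametrized family of halfspaces over a polyhedral parameter domain; a projection/Fourier--Motzkin-style or LP-duality argument bounds the number of such facets by a polynomial in $k$ and in the number of breakpoints of the piecewise-rational dependence, which in turn is polynomial in the encoding sizes of $f$ and the $r^j$. Summing over the constantly many combinatorial types (here $m=2$ is essential — the classification has only three triangle types) gives the claimed polynomial bound; one must also include the polynomially many facets of $S$ itself.

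The main obstacle I anticipate is the quantitative version of $T \subseteq S$: proving that ``all sufficiently large or thin lattice-free triangles give redundant inequalities'' with an \emph{explicit polynomial} bound on how large/thin is too large/thin. The subtlety is that a triangle inequality can be very close to a split inequality on most rays but differ substantially on a ray $r^j$ pointing almost exactly toward a vertex of the triangle; one has to show that such ``bad'' rays force the triangle to actually be one of the bounded ones, or else that the resulting inequality is still dominated by a combination of the split inequality and one other bounded-triangle inequality. Handling this ray-by-ray interaction cleanly — essentially controlling $\psi_B(r^j)$ as $M(B)$ degenerates to a split in each of the finitely many ``directions'' of degeneration — is where I expect the real work to lie, and it is presumably where the authors' techniques diverge most from \cite{andersen-louveaux-weismantel, averkov}.
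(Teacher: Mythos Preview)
Your proposal outlines a genuinely different strategy from the paper's, and it has a real gap at exactly the point you yourself flag.

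The paper does \emph{not} argue via a quantitative degeneration-to-splits bound. Instead it passes to the ``blocker'' side: it studies $\Delta' = \cl(\conv(\Delta)) + \R^k_+$ where $\Delta = \{\gamma(B) : M(B)\text{ lattice-free}\}$, shows that $T$ is defined by the extreme points of~$\Delta'$, and then proves that $\Delta'$ has only polynomially many extreme points. The mechanism for the last step is a \emph{tilting} argument: for a given maximal lattice-free triangle~$M(B)$, one perturbs one or more facets (keeping the required integer points on them and preserving which rays hit which facets) to exhibit $\gamma(B)$ as a strict convex combination of two other points of~$\Delta$, or as dominated by one. The only triangles that survive are those satisfying rigid combinatorial constraints --- essentially, enough corner rays together with integer points that are forced to be vertices or lie on facets of the integer hulls $(C(r^{j_1},r^{j_2}))_\IH$ of cones spanned by pairs of rays. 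These are then counted using the Cook--Hartmann--Kannan--McDiarmid bound on integer-hull complexity in fixed dimension, which is where the polynomial bound actually comes from. A short Bolzano--Weierstrass argument shows the extreme points of~$\Delta'$ lie in~$\Delta$ (not just its closure), and a ``ghost ray'' trick handles the case $\cone\{r^j\}\neq\R^2$.

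Your degeneration idea, by contrast, tries to discard unbounded families of triangles by comparing them to splits. The obstacle you name is fatal as stated: closeness of $\psi_B$ to a split gauge on the finite set $\{r^1,\dots,r^k\}$ does \emph{not} yield domination, only approximate domination, and an $\varepsilon$-approximate inequality need not be redundant in the exact closure~$T$. Worse, a Type~2 triangle can be arbitrarily elongated while still contributing a facet of~$T$: the paper's Case~b in Proposition~\ref{prop:type2-abstractly} explicitly allows such triangles, provided a corner ray and a suitable configuration of integer points on $F_3$ are present. So there is no uniform ``box'' containing all relevant triangles; the correct finiteness comes from the \emph{combinatorial} data (which rays are corner rays, which vertices of which $(C(r^{j_i},r^{j_{i'}}))_\IH$ sit on which facets), not from a metric bound on the triangle itself.

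Your second step --- a lower-envelope/parametric-LP argument over continuous triangle parameters --- is also underdetermined. Even within a fixed combinatorial type, the map $B\mapsto\gamma(B)$ is piecewise linear with pieces indexed by the assignment of rays to facets, and controlling the number of faces of the resulting envelope polynomially requires precisely the kind of structural analysis (corner rays, integer-hull vertices) that the paper carries out directly on the blocker side. In short, the paper's tilting-and-counting route sidesteps both obstacles you identify; your route would need a substantially new idea to close the quantitative gap.
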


We will first use convex analysis in Section \ref{preliminaries} to
illuminate the convex geometry of~$T$ by studying a set obtained from the defining inequalities of~$T$.  We will then demonstrate in Lemma \ref{lem:extreme_rep}
that it suffices to show that an associated convex set has finitely many extreme
points. In Section \ref{sec:finiteness}, we prove that there are indeed only
finitely many such extreme points, and in Section \ref{final}, we complete the
proof of Theorem \ref{thm:main}.  

The tools developed in Section~\ref{sec:finiteness} for proving Theorem~\ref{thm:main} will then be used to prove the following result about the mixed integer hull in Section~\ref{sec:polynomiality}.

\begin{theorem}
\label{THM:POLYFACETS}
For $m=2$, the number of facets of the mixed integer hull $\conv(R_f)$ is polynomial in the size of the
binary encoding sizes of $f, r^1, \dots, r^k$.
\end{theorem}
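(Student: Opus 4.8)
The plan is to split the facet-defining inequalities of $\conv(R_f)$ according to the type of maximal lattice-free set that produces them. First, $\conv(R_f)$ is a polyhedron: the set of $(x,s)$ with $x=f+\sum_j r^j s_j$, $x\in\Z^m$ and $s\ge 0$ has polyhedral convex hull by Meyer's theorem, and $\conv(R_f)$ is its projection onto the $s$-space. By~\eqref{eq:mixed-int-hull-by-lattice-free} and standard results in this theory~\cite{alww}, every facet of $\conv(R_f)$ is either one of the at most $k$ coordinate inequalities $s_j\ge 0$, or is an inequality $\gamma(B)\cdot s\ge 1$ with $M(B)$ a maximal lattice-free set; by Lov\'asz's classification (Theorem~\ref{mlfcb2}) the latter is, for $m=2$, a split, a triangle, or a quadrilateral. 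It therefore suffices to bound, by a polynomial in the encoding size of $f,r^1,\dots,r^k$, the number of facet-defining inequalities of each of these three kinds.

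The split and triangle inequalities are already controlled by Theorem~\ref{thm:main}. Each such inequality $\gamma(B)\cdot s\ge 1$ occurs among the inequalities defining $T$ in~\eqref{eq:generic_rep}, so it is valid for $T$; since $\conv(R_f)\subseteq T$ and $T$ is full-dimensional, if the inequality defines a facet of $\conv(R_f)$ then $\{\,s\in T : \gamma(B)\cdot s=1\,\}$ is a $(k-1)$-dimensional proper face of $T$, i.e.\ a facet of $T$. Distinct facet-defining inequalities of $\conv(R_f)$, normalized with right-hand side $1$, have distinct supporting hyperplanes and hence correspond to distinct facets of $T$; so their number is at most the number of facets of $T$, which is polynomial by Theorem~\ref{thm:main}.

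It remains to bound the number of facet-defining inequalities coming from maximal lattice-free quadrilaterals, and this is where I would reuse the machinery of Section~\ref{sec:finiteness}. The guiding principle is the one already used for triangles via Lemma~\ref{lem:extreme_rep}: a facet-defining inequality corresponds to an extreme point of the relevant set of coefficient vectors, and an extreme coefficient vector forces its underlying maximal lattice-free set to be rigidly pinned down. Concretely, I would argue, by the same reasoning as in Section~\ref{sec:finiteness}, that any maximal lattice-free quadrilateral $Q$ that contributes a facet of $\conv(R_f)$ can be taken so that all the lattice points governing it---the one in the relative interior of each of its four edges (which by Theorem~\ref{mlfcb2}(c) are the vertices of a unit-area parallelogram), together with the lattice points in a bounded neighbourhood that prevent the edges from moving outward---lie in a box about $f$ of size polynomial in the input, and that, given this polynomially bounded discrete data together with a bounded-size subset of the rays $r^j$ whose directions fix the locations of the vertices of $Q$, the quadrilateral $Q$ is determined up to finitely many choices. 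This produces a polynomial bound on the number of quadrilateral facets; combined with the previous paragraph and the at most $k$ coordinate facets, the proof is complete.

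The main obstacle is this last step: transplanting the finiteness analysis of Section~\ref{sec:finiteness}, which is organized around triangles, to four-edged lattice-free sets. Two points need care. First, the localization argument bounding the positions of the relevant lattice points must be re-run when the polygon has four edges rather than three; the geometric case distinction grows and has to accommodate, for instance, rays parallel to an edge and quadrilaterals that nearly degenerate into triangles (the latter being already subsumed in the triangle count). Second, one must make precise the rigidity claim---that a quadrilateral inequality defining a facet of $\conv(R_f)$ cannot be perturbed to a nearby quadrilateral inequality without violating validity or losing tightness on some ray---so that $Q$ really is pinned down by finitely many lattice points and rays. This is the quadrilateral analogue of the extreme-point characterization behind Lemma~\ref{lem:extreme_rep}, and establishing it for four-edged sets is where the substantive work lies.
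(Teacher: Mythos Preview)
Your reduction of the triangle and split case to Theorem~\ref{thm:main} is correct and is a nice shortcut: if a triangle or split inequality $\gamma(B)\cdot s\ge 1$ is facet-defining for $\conv(R_f)$, then since it is valid for $T\supseteq\conv(R_f)$ and both polyhedra are full-dimensional, it supports a facet of $T$, so there are only polynomially many such inequalities. The paper instead takes a uniform route that does not pass through Theorem~\ref{thm:main}: it observes that the set $\Gamma$ of all coefficient vectors $\gamma(B)$ with $M(B)$ lattice-free is the blocking polyhedron of $\conv(R_f)$, so facets of $\conv(R_f)$ correspond to extreme points of $\Gamma$, and then bounds these extreme points by invoking the counting Propositions~\ref{prop:counting_type3}--\ref{prop:type2-abstractly} directly together with a quadrilateral analogue. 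Your shortcut avoids re-quoting those propositions, but of course their content is already inside Theorem~\ref{thm:main}.

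The quadrilateral part, however, is where the gap lies. Your sketch speaks of lattice points ``in a box about $f$ of size polynomial in the input'' and of ``lattice points in a bounded neighbourhood that prevent the edges from moving outward,'' but this is not how the argument goes, and a naive bounding-box argument is not obviously available. The paper's quadrilateral count (Proposition~\ref{prop:counting_quads}) runs as follows. First, Lemma~\ref{lemma:corner_rays_almost}---which was already stated for both Type~3 triangles \emph{and} maximal lattice-free quadrilaterals---shows that if $P\not\subset\Z^2$ and fewer than four corner rays are present, then $\gamma(B)$ is a strict convex combination of other valid coefficient vectors and hence not extreme. So an extreme quadrilateral inequality forces all four corner rays. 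Second, with four corner rays present, the paper uses the \emph{ratio condition} of Cornu\'ejols and Margot: if it fails, then $\dim\mathcal N(\Y)\ge 1$ and the tilting Lemma~\ref{obs:dimension} again writes $\gamma(B)$ as a strict convex combination; if it holds, then the quadrilateral is \emph{uniquely} determined by the four corner rays and the four boundary integer points (Lemma~\ref{lemma:ratio_condition} in the appendix). Finally, each boundary integer point $y^i$ is located as a vertex of the integer hull $(C(r^{j_i},r^{j_{i+1}}))_{\mathrm I}$ via Lemma~\ref{rem:cone_vertex}\,(i), and Lemma~\ref{rem:ray_cone} bounds the number of such vertices polynomially. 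So the rigidity you were groping for is delivered precisely by the ratio condition, and the discrete data are localized not inside a polynomial-size box but as vertices of integer hulls of cones between consecutive corner rays---both ingredients that your sketch is missing.
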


We prove the following result in Section~\ref{sec:polynomiality} as a direct consequence of our proof for Theorem~\ref{THM:POLYFACETS}.
\begin{theorem}\label{thm:enumerate}
There exists a polynomial time algorithm to enumerate all the facets of $\conv(\Rf)$ when $m=2$.
\end{theorem}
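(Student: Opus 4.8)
The plan is to turn the (constructive) proof of Theorem~\ref{THM:POLYFACETS} into an algorithm. By~\eqref{eq:mixed-int-hull-by-lattice-free}, every facet-defining inequality of $\conv(\Rf)$ is either one of the coordinate inequalities $s_j \ge 0$, $j = 1,\dots,k$, or of the form $\gamma(B)\cdot s \ge 1$ for some matrix $B \in \R^{3\times 2}$ with $M(B)$ maximal lattice-free. The proof of Theorem~\ref{THM:POLYFACETS} shows that only polynomially many such $B$, each of polynomial binary encoding size, are needed; the first step is to verify that this argument is \emph{effective}, i.e., that it explicitly produces a list $\mathcal L$ of such matrices in time polynomial in the encoding sizes of $f, r^1, \dots, r^k$, such that $\{\,\gamma(B)\cdot s \ge 1 : B \in \mathcal L\,\} \cup \{\, s_j \ge 0 : j = 1,\dots,k\,\}$ contains every facet-defining inequality of $\conv(\Rf)$.

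Given such a list $\mathcal L$, the algorithm is straightforward. For each $B \in \mathcal L$, compute $\gamma(B) = (\psi_B(r^j))_{j=1}^k$; by definition $\psi_B(r^j)$ is just the maximum of the three inner products $b^i \cdot r^j$, so this is a polynomial-time computation producing rational numbers of polynomial size. We now have a (possibly redundant) system of polynomially many rational inequalities of polynomial size that describes $\conv(\Rf)$. Finally, apply a standard redundancy-elimination routine: compute the dimension and affine hull of $\conv(\Rf)$ by linear programming, and then, for each inequality in the system, test by a single linear program whether the face it induces has dimension one less than $\dim \conv(\Rf)$; keep exactly those inequalities, which are precisely the facet-defining ones. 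All of these are polynomially many linear programs of polynomial size, hence the whole procedure runs in polynomial time.

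The real work — and the main obstacle — is making the first step effective. One must revisit the case analysis of Sections~\ref{sec:finiteness} and~\ref{sec:polynomiality} and check that each witnessing maximal lattice-free set (triangle, quadrilateral, or split) used there is obtained by solving systems of polynomial size: in particular, that the Diophantine and continued-fraction computations implicit in locating the integral points on the boundary of a candidate lattice-free set, and the vertex and separation computations used to bound the facet count, all admit polynomial-time implementations. Once this bookkeeping is in place, the polynomial bound on $|\mathcal L|$ together with the polynomial bit-sizes of its entries makes the remaining steps routine, giving the claimed polynomial-time enumeration of the facets of $\conv(\Rf)$.
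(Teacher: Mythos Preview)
Your approach is essentially the paper's own: enumerate the candidate inequalities coming from the finite sets $\Xi_0,\dots,\Xi_4$ produced by the counting arguments, then use linear programming to discard the non-facet-defining ones. Two small corrections: facet-defining inequalities of $\conv(\Rf)$ can also come from maximal lattice-free quadrilaterals, so restricting to $B\in\R^{3\times2}$ is too narrow (you need $n\le4$, consistent with~\eqref{eq:mixed-int-hull-by-lattice-free} and the set~$\Xi_4$ of Proposition~\ref{prop:counting_quads}); and the concrete algorithmic tool that makes the ``Diophantine and continued-fraction computations'' you allude to effective is Hartmann's polynomial-time algorithm for enumerating vertices and facets of integer hulls in fixed dimension, already packaged in Lemma~\ref{rem:ray_cone}.
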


Apart from being the main machinery behind Theorems~\ref{thm:main},
\ref{THM:POLYFACETS} and \ref{thm:enumerate}, the results in
Section~\ref{sec:finiteness} also shed light on the classification results of
Cornu\'ejols and Margot for the facets of the mixed integer hull~\cite{cm}. In
Section~\ref{sec:finiteness}, we provide a more detailed set of necessary
conditions for a maximal lattice-free convex set to give a facet-defining
inequality. This avoids the use of an algorithm for a statement of such
necessary conditions (as was done in~\cite{cm} via the {\em Reduction
  Algorithm}) and also provides a completely different proof technique for
such classifications. This might also help towards obtaining such
results in dimensions higher than two, i.e., $m \geq 3$.  On the other
hand, we do not provide sufficient conditions, as was done in~\cite{cm}.

We make a remark about the proof structure of Theorem~\ref{thm:main} here. In this context, the most important result from Section~\ref{sec:finiteness} is Theorem~\ref{thm:polynomial}. Theorem~\ref{thm:polynomial} can be viewed as
the bridge between Section \ref{preliminaries} and Section \ref{final}. The
reader can follow the proof of Theorem~\ref{thm:main} by reading only
Sections~\ref{preliminaries} and~\ref{final}, if Theorem~\ref{thm:polynomial}
is assumed true. One can then return to
Section~\ref{sec:finiteness} to see the proof of Theorem~\ref{thm:polynomial},
which is rather technical. 
 
\section{\boldmath Preliminaries: Convex Analysis and the  Geometry of $T$}
\label{preliminaries}
 We will  prove several preliminary convex analysis lemmas relating to the  geometry of $T$.
We show that we can write the triangle closure $T$\ using a smaller set of inequalities.  We begin by defining the set of vectors which give the inequalities defining $T$,
\[
\Delta = \{\,\gamma(B) \st B\in \R^{3\times 2}\textrm{ such that }M(B)\textrm{ is lattice-free (not necessarily maximal)\,} \}.
\]
It is easily verified that for any matrix $B\in \R^{3\times 2}$, if $M(B)$ is lattice-free, then $\psi_B(r) \geq 0$ for all $r\in \R^2$ and therefore  $\Delta \subseteq \R^k_+$.

Let $\Delta' = \cl(\conv(\Delta)) + \R^k_+$ where $ \cl(\conv(\Delta))$ denotes the closed convex hull of $\Delta$ and $+$ denotes the Minkowski sum. Then $\Delta \subseteq \Delta'$ and $\Delta'$ is convex as it is the Minkowski sum of two convex sets (see Theorem 3.1 in \cite{rock}). In general the Minkowski sum of two closed sets is not closed (for example, $X = \{\,(x,y) \st y \geq 1/x, x > 0\,\}, Y = \{\,(x,y) \st x=0\,\}, X + Y = \{\,(x,y)  \st x > 0\,\}$). However, in this particular case, we show now that $\Delta'$ is closed. We will use the well-known fact that the Minkowski sum of two compact sets is indeed closed.  We prove the following more general result.

\begin{lemma}
\label{lem:closed}
Let $X,Y \subseteq \R^k_+$ be closed subsets of $\R^k_+$. Then $X + Y$ is closed.
\end{lemma}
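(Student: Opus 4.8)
The plan is to exploit the nonnegativity constraint to reduce the general closedness question to the well-known fact that the Minkowski sum of two \emph{compact} sets is closed. The obstruction to closedness in the Minkowski sum of two closed sets is always a ``runaway to infinity'': a sequence $x_n + y_n \to z$ where neither $x_n$ nor $y_n$ converges. I would rule this out by showing that, since both summands lie in the nonnegative orthant, boundedness of the sum forces boundedness of each summand.

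Concretely, let $z \in \cl(X+Y)$, and pick sequences $(x_n) \subseteq X$, $(y_n) \subseteq Y$ with $x_n + y_n \to z$. Since $x_n, y_n \in \R^k_+$, for every coordinate $i$ we have $0 \le (x_n)_i \le (x_n)_i + (y_n)_i = (x_n + y_n)_i$, and similarly $0 \le (y_n)_i \le (x_n+y_n)_i$. The sequence $(x_n + y_n)$ is convergent, hence bounded, say $\|x_n + y_n\|_\infty \le R$ for all $n$; therefore $\|x_n\|_\infty \le R$ and $\|y_n\|_\infty \le R$ for all $n$. Thus $(x_n)$ and $(y_n)$ are bounded sequences. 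Passing to a subsequence (first extract a convergent subsequence of $(x_n)$, then a further subsequence making $(y_n)$ converge), we may assume $x_n \to x$ and $y_n \to y$. Since $X$ and $Y$ are closed, $x \in X$ and $y \in Y$, and $x + y = \lim (x_n + y_n) = z$. Hence $z \in X + Y$, so $X + Y$ is closed.

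An equivalent and perhaps cleaner phrasing: for $R \ge 0$ let $X_R = X \cap [0,R]^k$ and $Y_R = Y \cap [0,R]^k$, which are compact, so $X_R + Y_R$ is closed (and compact). The coordinatewise domination argument above shows $(X + Y) \cap [0,R]^k = (X_R + Y_R) \cap [0,R]^k$ for every $R$, since any representation $z = x + y$ of a point $z \in [0,R]^k$ with $x,y \in \R^k_+$ automatically has $x, y \in [0,R]^k$. Therefore $(X+Y) \cap [0,R]^k$ is closed for every $R$; since $X + Y \subseteq \R^k_+$ is covered by the cubes $[0,R]^k$ as $R \to \infty$ and a set whose intersection with every member of such an exhausting family of closed sets is closed is itself closed, we conclude $X + Y$ is closed.

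There is no real obstacle here; the only thing to be careful about is the double subsequence extraction (or, in the second phrasing, the elementary point-set topology fact that a subset of $\R^k_+$ is closed iff its intersection with every cube $[0,R]^k$ is closed, which holds because $\{[0,R]^k\}_{R \in \N}$ is a locally finite — indeed nested and exhausting — closed cover). Both ingredients are standard, so the proof is short.
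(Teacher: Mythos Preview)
Your proof is correct and takes essentially the same approach as the paper: both exploit that nonnegativity of $X,Y$ forces any decomposition $z=x+y$ with $\|z\|_\infty\le R$ to have $\|x\|_\infty,\|y\|_\infty\le R$, thereby reducing to the compact case. Your second phrasing in particular is nearly identical to the paper's argument, which intersects with the cube $A=\{x\in\R^k_+ : \|x\|_\infty\le \|z\|_\infty+1\}$ and notes $Z\cap A\subseteq (X\cap A)+(Y\cap A)$ is contained in a sum of compacts.
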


\begin{proof}
Let $Z = X + Y$ and let $(z^n) \in Z$ such that $z^n \to z \in \R^k$.  
We want to show that $z \in Z$. 
Let $A = \{\, x \in \R^k_+ \st \|x\|_\infty \leq \|z\|_\infty + 1 \,\}$.  Since $\|z^n - z\|_\infty \to 0$ as $n\to \infty$, for some $N \in \mathbb{N}$, we must have that $\|z^n\|_\infty \leq \|z\|_\infty + 1$, that is, $z^n\in Z \cap A$, for all $n\geq N$.  
Since $X,Y \subseteq \R^k_+$, we see that $Z\cap A\subseteq (X \cap A) + (Y \cap A)$.  
Since $(X \cap A) + (Y \cap A)$ is a Minkowski sum of two closed and bounded subsets of $\R^k$, i.e., compact, $(X \cap A) + (Y \cap A)$ is closed. Therefore, the tail of $(z^n)$ is contained in a closed set, so it must converge to a point in the set, that is, $z \in (X \cap A) + (Y \cap A)$.  Since  $(X \cap A) + (Y \cap A) \subseteq X + Y = Z$,  we have that $z\in Z$. Therefore, $Z$ is closed.
\end{proof}

\begin{lemma}\label{lem:T_and_P'}
$T = \{\,s \in \R^k_+ \st \gamma\cdot s \geq 1 \textrm{ for all } \gamma \in \Delta'\,\}$.
\end{lemma}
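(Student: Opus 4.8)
The plan is to show both inclusions between $T$ and the set $T' := \{\,s\in\R^k_+ \st \gamma\cdot s\geq 1 \text{ for all }\gamma\in\Delta'\,\}$. The inclusion $T'\subseteq T$ is immediate: since $\Delta\subseteq\Delta'$, every defining inequality of $T$ (those coming from $\gamma=\gamma(B)$ with $M(B)$ lattice-free) is one of the inequalities defining $T'$, so any $s$ satisfying all of them a fortiori satisfies the subcollection defining $T$; combined with $s\in\R^k_+$ in both definitions, this gives $T'\subseteq T$.

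For the reverse inclusion $T\subseteq T'$, fix $s\in T$; then $s\in\R^k_+$ and $\gamma(B)\cdot s\geq 1$ for every $B$ with $M(B)$ lattice-free, i.e. $\gamma\cdot s\geq 1$ for all $\gamma\in\Delta$. I must upgrade this to $\gamma\cdot s\geq 1$ for all $\gamma\in\Delta' = \cl(\conv(\Delta)) + \R^k_+$. First, the linear functional $\gamma\mapsto\gamma\cdot s$ is affine, hence the set $\{\,\gamma \st \gamma\cdot s\geq 1\,\}$ is a closed half-space; since it contains $\Delta$, it contains $\conv(\Delta)$ and then its closure $\cl(\conv(\Delta))$. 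Next, because $s\geq 0$, for any $w\in\R^k_+$ we have $w\cdot s\geq 0$, so for $\gamma = \gamma_0 + w$ with $\gamma_0\in\cl(\conv(\Delta))$ and $w\in\R^k_+$ we get $\gamma\cdot s = \gamma_0\cdot s + w\cdot s \geq 1 + 0 = 1$. Thus every $\gamma\in\Delta'$ satisfies $\gamma\cdot s\geq 1$, which together with $s\in\R^k_+$ shows $s\in T'$.

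There is no real obstacle here; the only point requiring a moment of care is that $\Delta'$ is genuinely closed (so that the statement's ``$\gamma\in\Delta'$'' ranges over the intended set), but this is exactly Lemma~\ref{lem:closed} applied with $X=\cl(\conv(\Delta))$ and $Y=\R^k_+$, both closed subsets of $\R^k_+$ — recall $\Delta\subseteq\R^k_+$ was observed above, hence so are its convex hull and closure. Assembling the two inclusions yields $T = T'$, which is the claimed identity.
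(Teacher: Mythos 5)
Your proof is correct and follows essentially the same route as the paper: one inclusion from $\Delta\subseteq\Delta'$, and the other by decomposing $\gamma=\gamma_0+w$ with $\gamma_0\in\cl(\conv(\Delta))$, $w\in\R^k_+$, and using $s\geq 0$. The only cosmetic difference is that you invoke the closed-half-space principle directly where the paper unwinds it via an explicit sequence of convex combinations; also, the aside about $\Delta'$ being closed is not actually needed for this lemma (it matters later, in Lemma~\ref{lem:rec_cone}).
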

\begin{proof}
Since $\Delta \subseteq \Delta'$, we have that $$\{\,s \in \R^k_+ \st
\gamma\cdot s \geq 1  \textrm{ for all } \gamma \in \Delta'\,\} \subseteq
\{\,s \in \R^k_+ \st \gamma\cdot s \geq 1 \textrm{ for all } \gamma \in \Delta\,\} = T.$$ We now show the reverse inclusion. Consider any $s \in T$ and $\gamma \in \Delta'$. We show that $\gamma \cdot s \geq 1$.

Since $\Delta' = \cl(\conv(\Delta)) + \R^k_+$, there exists $r\in \R^k_+$ and $a \in \cl(\conv(\Delta))$ such that $\gamma = a + r$. Moreover, there exists a sequence $(a^n)$ such that $(a^n)$ converges to $a$ and $(a^n)$ is in the convex hull of points $q^j \in \Delta$, $j\in J$. Since $q^j \cdot s \geq 1$ for all $j\in J$, we have that $a^n \cdot s \geq 1$ for all $n \in \mathbb{N}$. Therefore $a\cdot s = \lim_{n \to \infty} a^n\cdot s \geq 1$. Since $r \in \R^k_+$, $r\cdot s \geq 0$ and so $\gamma\cdot s = (a+r)\cdot s \geq a\cdot s \geq 1$.
\end{proof}

We say that $a\in \Delta'$ is a {\em minimal} point if there does not exist $x \in \Delta'$ such that $a - x \in \R^k_+\setminus\{0\}$. If such an $x$ exists then we say that $a$ is {\em dominated} by~$x$.
We introduce some standard terminology from convex analysis. Given a convex set $C \subseteq \R^k$, a {\em supporting hyperplane} for~$C$ is a hyperplane $H = \{\,x \in \R^k \st h\cdot x = d\,\}$ such that $h\cdot c \leq d$ for all $c\in C$ and $H \cap C \neq \emptyset$. A point $x \in C$ is called {\em extreme} if there do not exist $y^1$ and~$y^2$ in $C$ different from $x$ such that $x = \frac12(y^1 + y^2)$. If such $y^1\neq y^2$ exist, we say that $x$~is a \emph{strict convex combination} of $y^1$ and~$y^2$.  A point~$x$ is called {\em exposed} is there exists a supporting hyperplane~$H$ for~$C$ such that $H \cap C = \{x\}$. We will denote the closed ball of radius~$r$ around a point~$y$ as~$ \B(y,r)$. We denote the boundary of this ball by~$\partial \B (y,r)$.

\begin{lemma}\label{lem:rec_cone}
$\Delta'$ is a closed convex set with $\R_+^k$ as its recession cone.
\end{lemma}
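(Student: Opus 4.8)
The plan is to establish the three claims bundled into Lemma~\ref{lem:rec_cone}: that $\Delta'$ is closed, that it is convex, and that its recession cone is exactly $\R_+^k$. Closedness is already essentially done: $\Delta' = \cl(\conv(\Delta)) + \R_+^k$ is the Minkowski sum of two closed subsets of $\R_+^k$ (recall $\Delta \subseteq \R_+^k$, hence $\cl(\conv(\Delta)) \subseteq \R_+^k$ as well, and $\R_+^k$ is trivially a closed subset of $\R_+^k$), so Lemma~\ref{lem:closed} applies directly. Convexity was already noted right before the lemma statement: $\Delta'$ is a Minkowski sum of two convex sets.

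The substantive part is identifying the recession cone. First I would show $\R_+^k \subseteq \operatorname{rec}(\Delta')$: since $\Delta'$ is nonempty (it contains $\Delta$, which is nonempty because one can always choose $B$ with $M(B)$ lattice-free) and since $\Delta' + \R_+^k = \cl(\conv(\Delta)) + \R_+^k + \R_+^k = \cl(\conv(\Delta)) + \R_+^k = \Delta'$, any $r \in \R_+^k$ satisfies $\Delta' + r \subseteq \Delta'$, which for a nonempty closed convex set is exactly the statement that $r$ is in the recession cone. For the reverse inclusion $\operatorname{rec}(\Delta') \subseteq \R_+^k$, I would use that $\Delta' \subseteq \R_+^k$: if $d$ is a recession direction and $a \in \Delta'$, then $a + \lambda d \in \Delta' \subseteq \R_+^k$ for all $\lambda \ge 0$, so every coordinate of $a + \lambda d$ is nonnegative for all $\lambda \ge 0$, which forces every coordinate of $d$ to be nonnegative, i.e., $d \in \R_+^k$.

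The only place requiring a little care is the claim that the recession cone of a closed convex set $C$ equals $\{\,d \st C + d \subseteq C\,\}$ (equivalently $\{\,d \st x + \lambda d \in C \ \forall x \in C, \forall \lambda \ge 0\,\}$); this is the standard characterization (see, e.g., Theorem~8.3 in~\cite{rock}), valid precisely because $C$ is nonempty, closed, and convex — which we have just verified. I do not anticipate a genuine obstacle here; the lemma is a packaging of facts that are either immediate from the construction of $\Delta'$ or from Lemma~\ref{lem:closed} together with the elementary observation $\Delta \subseteq \R_+^k$. The one subtlety worth stating explicitly in the write-up is why $\cl(\conv(\Delta))$ is itself contained in $\R_+^k$ (needed to invoke Lemma~\ref{lem:closed}), which follows because $\R_+^k$ is closed and convex and contains $\Delta$.
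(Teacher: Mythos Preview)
Your proposal is correct and follows essentially the same approach as the paper: closedness via Lemma~\ref{lem:closed} (after noting $\cl(\conv(\Delta))\subseteq\R_+^k$), convexity as a Minkowski sum of convex sets, and the recession cone identification from $\Delta'\subseteq\R_+^k$ together with $\Delta'+\R_+^k=\Delta'$. The paper's proof is simply terser on the recession-cone step, asserting it in one line from $\Delta'\subseteq\R_+^k$, whereas you spell out both inclusions explicitly.
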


\begin{proof}
Recall that $\Delta \subseteq \R_+^k$. Since $\R^k_+$ is closed and convex, $\cl(\conv(\Delta)) \subseteq \R^k_+$ and so $\Delta' = \cl(\conv(\Delta)) + \R_+^k$ is closed by Lemma~\ref{lem:closed}. Since the Minkowski sum of two convex sets is convex, $\Delta'$ is convex. Moreover since $\Delta' \subseteq \R^k_+$, the recession cone of $\Delta'$ is $\R^k_+$.
\end{proof}

\begin{lemma}\label{lem:extreme_rep}
Let $C$ be the set of extreme points of $\Delta'$. Then $$T =  \{\,s \in \R^k_+ \st a\cdot s \geq 1  \textrm{ for all } a \in C\,\}.$$
\end{lemma}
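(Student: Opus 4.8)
The plan is to show the two inclusions separately, using Lemma~\ref{lem:T_and_P'} as the starting point, so that it suffices to compare the set $\{\,s \in \R^k_+ \st a\cdot s \geq 1 \text{ for all } a \in C\,\}$ with $\{\,s \in \R^k_+ \st \gamma\cdot s \geq 1 \text{ for all } \gamma \in \Delta'\,\}$. One inclusion is trivial: since $C \subseteq \Delta'$, every $s$ satisfying $\gamma \cdot s \geq 1$ for all $\gamma \in \Delta'$ in particular satisfies $a \cdot s \geq 1$ for all $a \in C$, so $T$ is contained in the right-hand side.

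For the reverse inclusion, fix $s \in \R^k_+$ with $a\cdot s \geq 1$ for all extreme points $a \in C$; I want to show $\gamma \cdot s \geq 1$ for an arbitrary $\gamma \in \Delta'$. The idea is to represent $\gamma$ in terms of the extreme points of $\Delta'$ plus its recession cone. By Lemma~\ref{lem:rec_cone}, $\Delta'$ is a closed convex set whose recession cone is $\R^k_+$; moreover $\Delta' \subseteq \R^k_+$ contains no line (it is pointed, lying in the nonnegative orthant), so by the Minkowski--Weyl / Krein--Milman-type theorem for closed convex sets without lines (e.g.\ Rockafellar, \emph{Convex Analysis}, Theorem~18.5), $\Delta' = \cl\bigl(\conv(C)\bigr) + \R^k_+$. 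Hence there is a point $a \in \cl(\conv(C))$ and a vector $r \in \R^k_+$ with $\gamma = a + r$. Writing $a$ as a limit of convex combinations $a^n = \sum_{i} \lambda_i^n c_i$ of extreme points $c_i \in C$ with $\lambda_i^n \geq 0$, $\sum_i \lambda_i^n = 1$, and using $c_i \cdot s \geq 1$, we get $a^n \cdot s \geq 1$ for all $n$, so $a \cdot s = \lim_n a^n \cdot s \geq 1$. Since $r \in \R^k_+$ and $s \in \R^k_+$, we have $r \cdot s \geq 0$, and therefore $\gamma \cdot s = a\cdot s + r\cdot s \geq 1$. This is essentially the same argument as in the proof of Lemma~\ref{lem:T_and_P'}, with $C$ in place of $\Delta$.

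The one point that requires care — and which I would flag as the main technical obstacle — is the appeal to the representation $\Delta' = \cl(\conv(C)) + \R^k_+$: one must verify that $\Delta'$ actually \emph{has} extreme points and that the closed-convex-hull-plus-recession-cone representation is valid. This is exactly where it matters that $\Delta'$ contains no line, which follows because $\Delta' \subseteq \R^k_+$; without this, a closed convex set need not be generated by its extreme points together with its recession cone. Once this structural fact is in hand, the rest is the routine limiting argument above. (In fact, once Theorem~\ref{thm:main} is targeted, this lemma is precisely what reduces the polyhedrality of $T$ to showing that $\Delta'$ has only finitely many extreme points, which is the content of the later sections.)
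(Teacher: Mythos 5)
Your proof is correct and follows essentially the same route as the paper: reduce via Lemma~\ref{lem:T_and_P'}, use line-freeness of $\Delta'$ (from Lemma~\ref{lem:rec_cone}) together with Rockafellar's Theorem 18.5 to express an arbitrary $\gamma\in\Delta'$ in terms of extreme points of $\Delta'$ plus a recession direction, and conclude. The only cosmetic difference is that the paper invokes Theorem 18.5 in its finite-convex-combination form, writing $a = z + \sum_j \lambda_j v^j$ with $v^j$ extreme and $z$ a recession direction, so the extra passage through $\cl(\conv(C))$ and the limiting step in your argument are not needed.
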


\begin{proof}
Let $\hat T = \{\,s \in \R^k_+ \st a\cdot s \geq 1 \text{ for all } a \in C\,\}$. Since $C \subseteq \Delta'$, we have that $T \subseteq \hat T$. We show the reverse inclusion. Consider any $s\in \hat T$.

By Lemma~\ref{lem:rec_cone}, $\Delta'$ is a closed convex set with $\R_+^k$ as its the recession cone. Therefore, $\Delta'$ contains no lines. This implies that any point $a \in \Delta'$ can be represented as $a = z + \sum_j \lambda_jv^j$ where $z$ is a recession direction of $\Delta'$, $v^j$'s are extreme points of $\Delta'$, $\lambda_j \geq 0$ and $\sum_j \lambda_j = 1$ (see Theorem 18.5 in \cite{rock}). Moreover, since the $v^j$'s are extreme points, $v^j \in C$ and therefore $v^j\cdot s \geq 1$ for all $j$ because $s \in \hat T$. Since $z \in \R^k_+$, $s\in \R^k_+$, $\lambda_j \geq 0$ for all $j$ and $\sum_j \lambda_j = 1$, $a\cdot s = z\cdot s + \sum_j \lambda_j (v^j\cdot s) \geq 1$. Therefore, for all $a \in \Delta'$, $a\cdot s \geq 1$. By Lemma~\ref{lem:T_and_P'}, $s \in T$.
\end{proof}

\begin{obs}\label{obs:minimal}
Since the recession cone of $\Delta'$ is $\R^k_+$ by Lemma~\ref{lem:rec_cone}, every extreme point of $\Delta'$ is minimal.
\end{obs}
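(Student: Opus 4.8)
The plan is to prove the contrapositive: a point of $\Delta'$ that is \emph{not} minimal cannot be an extreme point of $\Delta'$. So suppose $a \in \Delta'$ is dominated, i.e., there is some $x \in \Delta'$ with $d := a - x \in \R^k_+ \setminus \{0\}$. The crucial point is that $d$ is a nonzero vector of $\R^k_+$, which by Lemma~\ref{lem:rec_cone} is precisely the recession cone of the closed convex set $\Delta'$; hence translating any point of $\Delta'$ by a nonnegative multiple of $d$ keeps it inside $\Delta'$. In particular $x + 2d \in \Delta'$.

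Next I would exhibit $a$ as a strict convex combination of two distinct points of $\Delta'$. Writing $y^1 = x = a - d$ and $y^2 = x + 2d = a + d$, both lie in $\Delta'$ by the previous paragraph, and since $d \neq 0$ they are distinct and each differs from $a$. Since $\frac12(y^1 + y^2) = x + d = a$, the point $a$ is not extreme. Taking the contrapositive gives that every extreme point of $\Delta'$ is minimal.

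I do not anticipate a genuine obstacle here: the only thing to be careful about is invoking the right fact, namely that in a \emph{closed} convex set a recession direction can be added to \emph{any} point of the set (not merely used to escape to infinity from one fixed point), which is standard, together with the identification of the recession cone of $\Delta'$ with $\R^k_+$ supplied by Lemma~\ref{lem:rec_cone}.
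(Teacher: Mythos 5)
Your proof is correct and is precisely the argument the paper leaves implicit in the observation's phrasing: a dominated point $a$ has a nonzero difference $d=a-x\in\R^k_+$ with some $x\in\Delta'$, and since $\R^k_+$ is the recession cone of the closed convex set $\Delta'$, both $a-d=x$ and $a+d=x+2d$ lie in $\Delta'$, exhibiting $a$ as a midpoint of two distinct points of $\Delta'$. This matches the paper's intended reasoning, just spelled out.
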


We end this section with the following technical lemma.

\begin{lemma}\label{lem:seq_extreme} Let $A$ be any subset of $\R^k$ and let $A' = \cl(\conv(A))$. Then for any extreme point $x$ of $A'$, there exists a sequence of points $(a^n) \in A$ converging to $x$. 
\end{lemma}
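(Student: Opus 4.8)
The plan is to argue by contradiction: suppose $x$ is an extreme point of $A' = \cl(\conv(A))$ but no sequence of points of $A$ converges to $x$. Then there is some radius $\rho > 0$ such that the open ball $\B(x,\rho)$ contains no point of $A$ (if every ball around $x$ contained a point of $A$, picking radii $1/n$ would yield the desired sequence). Hence $A \subseteq \R^k \setminus \B(x,\rho)$, and since the closed complement of an open ball is itself a closed set, I would like to pass this containment through the closed convex hull operation. The subtlety is that $\R^k \setminus \B(x,\rho)$ is not convex, so one cannot simply say $A' \subseteq \R^k \setminus \B(x,\rho)$. This is the main obstacle, and it is resolved by a separation/Carathéodory-type argument localizing near $x$.

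The key step is to show that $x \notin \cl(\conv(A \cap \B(x,2\rho)))$ forces $x$ to be a strict convex combination of two other points of $A'$, contradicting extremality. More carefully, I would proceed as follows. Since $x$ is extreme in $A'$, it is in particular in $A'$, so $x = \lim_n c^n$ where each $c^n \in \conv(A)$; write $c^n = \sum_{i} \lambda_i^n a_i^n$ with $a_i^n \in A$, $\lambda_i^n \ge 0$, $\sum_i \lambda_i^n = 1$, and by Carathéodory we may take at most $k+1$ terms. Split each convex combination into the part using points $a_i^n$ lying in $\B(x,\rho)$ — which is empty, since $A \cap \B(x,\rho) = \emptyset$ — versus the rest. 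So in fact every $a_i^n$ satisfies $\|a_i^n - x\| \ge \rho$. Now I would use the standard fact that if a convex combination of points all lying outside an open ball centered at $x$ converges to $x$, then the "spread" of the points must shrink in a controlled way; concretely, one shows the points $a_i^n$ with nonnegligible weight $\lambda_i^n$ must cluster on the sphere $\partial\B(x,\rho)$ near an exposed direction, and then two of them (or suitable limits) give points $y^1 \neq y^2$ in $A'$ with $x = \tfrac12(y^1+y^2)$.

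A cleaner route that avoids the messy clustering argument: use a supporting hyperplane. Since $x \in A'$ and $x \notin \B(x,\rho) \supseteq$ (nothing from $A$), consider for each unit vector $u$ the supremum $\sup\{\,u\cdot a \st a \in A'\,\}$. Actually, the slickest argument is: if no sequence from $A$ converges to $x$, then there is $\rho>0$ with $A \cap \B(x,\rho)=\emptyset$, hence $A \subseteq \bigcup_{u \in S^{k-1}} H_u^+$ where $H_u^+ = \{\,y \st u\cdot(y-x) \ge \rho\,\}$ — but more usefully, for the specific point $x$, convexity gives that $\cl(\conv(A))$ restricted to a neighborhood of $x$ equals $\cl(\conv(A \cap \B(x,2\rho)))$ near $x$ only up to adding far-away mass, which near $x$ can only contribute through convex combinations that "pass over" the hole $\B(x,\rho)$. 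I would formalize this by showing $x$ lies in the relative interior of a segment $[y^1, y^2] \subseteq A'$: take any $c^n \to x$ with $c^n \in \conv(A)$, $c^n \ne x$ eventually; then $c^n$ lies on a segment through $x$ with endpoints in $\conv(A) \subseteq A'$ whose far endpoint stays bounded away from $x$ (using $A \cap \B(x,\rho) = \emptyset$ to bound the barycentric structure), pass to a convergent subsequence of these endpoints using boundedness on a compact ball, and obtain in the limit two points of $A'$ with $x$ strictly between them. This contradicts $x$ being extreme, completing the proof.
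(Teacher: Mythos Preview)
Your plan aims to contradict extremality of $x$ directly by producing $y^1\ne y^2$ in $A'$ with $x=\tfrac12(y^1+y^2)$, using only the hypothesis $A\cap\B(x,\rho)=\emptyset$. The final step you propose is where the gap lies: you assert that each $c^n\in\conv(A)$ (with $c^n\to x$) ``lies on a segment through $x$ with endpoints in $\conv(A)$.'' There is no reason such a segment exists when $x\notin\conv(A)$, and nothing rules this out --- only $x\in\cl(\conv(A))$ is known. One endpoint can be $c^n$, but you give no mechanism to place a second endpoint of $\conv(A)$ on the opposite side of $x$; the appeal to ``barycentric structure'' does not supply one, since a convex combination of points all at distance $\ge\rho$ from $x$ need not split into two sub-combinations on opposite sides of $x$ (indeed $c^n$ itself is such a combination and is arbitrarily close to $x$). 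Even granting such segments, the boundedness needed to extract a convergent subsequence of the far endpoints is never established --- the Carath\'eodory witnesses $a_i^n$ may run off to infinity.

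The paper sidesteps this by first treating \emph{exposed} points of $A'$, where a supporting hyperplane $H$ with $H\cap A'=\{x\}$ is available. That hyperplane is exactly the missing ingredient: any $b\in\conv(A)$ near $x$ must have a Carath\'eodory vertex $v^j\in A$ with $h\cdot v^j$ nearly equal to $d$, and the segment $[b,v^j]$ then crosses the sphere $\partial\B(x,\epsilon)$ at a point of $A'$ lying too close to the compact set $D=H\cap\partial\B(x,\epsilon)$ --- contradicting the fact (from $H\cap A'=\{x\}$ and compactness of $D$) that $D$ sits at uniformly positive distance from $A'$. The passage from exposed to extreme points is then a one-line diagonal argument via Straszewicz's theorem. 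Your earlier instinct to ``use a supporting hyperplane'' was the right one; carrying it through essentially reproduces the paper's exposed-point argument, after which Straszewicz is the natural bridge to arbitrary extreme points.
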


\begin{proof}
We first show the following claim.

\begin{claim}\label{claim:exposed} For any exposed point $a$ of $A'$, there exists a sequence of points $(a^n) \in A$ converging to $a$.\end{claim}
\begin{claimproof}
Let $H = \{\,x \in \R^k \st h \cdot x = d\,\}$ be a supporting hyperplane for $A'$ such that $H \cap A' = \{a\}$. 
Suppose to the contrary that there does not exist such a sequence in $A$. 
This implies that there exists $\epsilon > 0$ such that $\B(a,\epsilon) \cap A = \emptyset$. Let $D = \partial \B(a,\epsilon) \cap H$. 
Since $H \cap A' = \{a\}$, for any point $c\in D$, $\dist(c, A') > 0$. Since $D$ is a compact set and the distance function is a Lipschitz continuous function, there exists $\delta > 0$ such that $\dist(c, A') > \delta$ for all $c \in D$. 
We choose $\delta'$ such that for any $y \in \partial \B(a,\epsilon)$ satisfying $d \geq h\cdot y > d - \delta'$, there exists $c \in D$ with $\dist(c,y) < \delta$.

Since $a \in \cl(\conv(A))$, there exists a sequence of points $(b^n) \in \conv(A)$ converging to $a$. 
This implies that $(h\cdot b^n)$ converges to $h\cdot a = d$. Therefore, we can choose $b$ in this sequence such that $h\cdot  b > d - \delta'$ and $b \in \B(a, \epsilon)$. 
Since $b \in \conv(A)$ there exist $v^j \in A, j = 0, \ldots, k$ such that $b = \conv(\{v_0, \ldots, v_k\})$. Therefore, for some $j$, $h\cdot v^j > d- \delta'$.
 Moreover, since $v^j \in A$ and $\B(a,\epsilon) \cap A = \emptyset$, $v^j \not\in \B(a,\epsilon)$. 
 Since $b \in \B(a, \epsilon)$ and $v^j \not\in \B(a,\epsilon)$, there exists a point $p \in \partial \B(a,\epsilon)$ such that $p$ is a convex combination of $b$ and $v^j$. 
 Since $h\cdot  b > d - \delta'$ and $h\cdot v^j > d- \delta'$, we have that $h\cdot p > d- \delta'$. 
Moreover $b \in \conv(A)$ implying $b \in A'$ and $v^j \in A'$, so we have $p \in A'$ and so $d \geq h\cdot p$ since $H$ is a supporting hyperplane for $A'$. So by the choice of $\delta'$, we have that there exists $c \in D$ with $\dist(c, p) < \delta$.
 However, $\dist(c, A') > \delta$ for all $c \in D$ which is a contradiction because $p \in A'$.
\end{claimproof}

By Straszewicz's theorem (see for example Theorem 18.6 in \cite{rock}), for any extreme point $x$ of $A'$, there exists a sequence of exposed points converging to $x$. 
So for any $n \in \mathbb{N}$, there exists an exposed point $e^n$ such that $\dist(e^n, x) < \frac{1}{2n}$ and using Claim~\ref{claim:exposed}, there exists $a^n \in A$ such that $\dist(e^n, a^n) < \frac{1}{2n}$. Now the sequence $(a^n)$ converges to $x$ since $\dist(a^n, x) < \frac{1}{n}$.
\end{proof}


\section{\boldmath Polynomially Many Extreme Points in $\Delta'$}
\label{sec:finiteness}
In this section, we will introduce certain tools and use them to prove the following proposition. 

\begin{proposition}\label{prop:almost-extreme}
There exists a finite set $\Xi{} \subseteq \Delta$, such that if $\gamma \in \Delta\setminus \Xi{}$, then $\gamma$ is dominated by some $\gamma' \in \Delta$, or $\gamma$ is the strict convex combination of $\gamma^1$ and $\gamma^2 \in \Delta$. Furthermore, the cardinality of $\Xi$ is bounded polynomially in the binary encoding size of $f, r^1, \ldots, r^k$.
\end{proposition}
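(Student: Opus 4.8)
The plan is to build $\Xi$ from the coefficient vectors of a polynomial-size family of ``rigid'' maximal lattice-free triangles, together with the coefficient vectors of the finitely many facet-defining split inequalities, and to show that any other $\gamma = \gamma(B) \in \Delta$ is reducible by a continuous deformation of $M(B)$ inside the class of lattice-free triangles. Two elementary remarks get the reduction started. First, if $M(B) \subseteq M(B')$ then $\psi_{B'}(r) \leq \psi_B(r)$ for every $r$, so $\gamma(B') \leq \gamma(B)$; hence we may enlarge $M(B)$ to a maximal lattice-free convex set, and if the result is a triangle we either obtain strict domination or $\gamma$ is unchanged and we may assume $M(B)$ itself is a maximal lattice-free triangle (Type 1, 2, or 3 in the sense of Dey and Wolsey), with an integral point $z^i$ in the relative interior of each edge $i \in \{1,2,3\}$. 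Enlargements that are splits are subsumed by the (polyhedral, polynomially faceted) split closure $S$, since $T = T \cap S$; enlargements that are quadrilaterals, and triangles strictly inscribed in them, are handled by the deformation argument below, which in that situation has extra room to operate.

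Second, $\gamma(B)$ depends on very little geometric data. Each ray $r^j$ leaves $M(B)$ through a well-defined edge $\iota(j) \in \{1,2,3\}$ (or, exceptionally, through a vertex), and $\psi_B(r^j) = b^{\iota(j)} \cdot r^j$, so $\gamma(B)$ is determined by the assignment $\iota$ and the three edge-normals $b^1, b^2, b^3$. The assignment $\iota$ is the partition of the $k$ ray-directions into the three angular arcs delimited by the directions $f \to v^1, f \to v^2, f \to v^3$, so there are only $O(k^3)$ possibilities for $\iota$. Fixing the lattice points $z^1, z^2, z^3$, each normal $b^i$ is constrained only by $b^i \cdot (z^i - f) = 1$, an affine line which we parametrize by a single scalar $t_i$ (geometrically the rotation of edge $i$ about $z^i$); along it, $b^i$, and hence every $\psi_B(r^j)$ with $\iota(j) = i$, is affine in $t_i$. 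Therefore, for fixed $(z^1, z^2, z^3)$ and fixed $\iota$, the map $(t_1, t_2, t_3) \mapsto \gamma(B)$ is affine, and the set of admissible $(t_1, t_2, t_3)$ — those for which $M(B)$ is a bounded lattice-free triangle with $f$ inside and with ray-to-edge assignment $\iota$ — is a semialgebraic set cut out by a bounded number of inequalities of bounded degree in the $t_i$ (boundedness uses signs of $2 \times 2$ determinants bilinear in the $t_i$; lattice-freeness is a conjunction, over the relevant candidate integer points, of disjunctions of affine conditions).

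The deformation argument is then convex-analytic and short. If $M(B)$ sits at a point of the admissible region that lies on a positive-dimensional face, we move within that face: since $\gamma$ is affine there, either $\gamma$ is constant along a feasible direction (and we push toward the boundary of the region without changing $\gamma$) or we can move a small amount in two opposite feasible directions, exhibiting $\gamma(B) = \tfrac12(\gamma(B^1) + \gamma(B^2))$ with $B^1, B^2 \in \R^{3 \times 2}$ defining lattice-free triangles and $\gamma(B^1) \neq \gamma(B^2)$, so $\gamma(B)$ is a strict convex combination of elements of $\Delta$. Likewise, if for some edge $i$ all rays assigned to $i$ lie on one side of $z^i$ and the edge can be rotated outward about $z^i$, that rotation decreases those coordinates of $\gamma$ and leaves the rest fixed, so $\gamma(B)$ is dominated. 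Iterating, it suffices to keep, for each of the $O(k^3)$ assignments $\iota$ and each relevant triple $(z^1, z^2, z^3)$, only the $\gamma$-images of the vertices of the corresponding admissible region — and, because of the exceptional situations (a ray through a vertex, a lattice point $z^i$ sliding to a vertex, a triangle degenerating to a split or quadrilateral during a deformation), a polynomial number of additional breakpoint configurations. All of these go into $\Xi$.

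The crux — and the reason Section~\ref{sec:finiteness} is technical — is the polynomial (not merely finite) counting in the last step: one must show that a vertex triangle which is \emph{not} dominated cannot have its edges pinned from outside by integer points that are far from $f$ relative to the ray geometry, for otherwise some edge could be rotated in a direction that strictly decreases $\gamma$ on all rays touching it; combined with rationality of $f$ and the $r^j$, this confines the admissible triples $(z^1, z^2, z^3)$ (and the pinning integer points) to a region of polynomial bit-size, so that only polynomially many triples arise and, since each admissible region is a bounded-complexity semialgebraic set in $\R^3$, contributes only polynomially many vertices. This quantitative step is exactly the refinement of the necessary conditions of Cornu\'ejols and Margot announced in the introduction, and it, rather than the convex-analytic skeleton above, is where the main work lies; a secondary delicate point is making the interface with splits and quadrilaterals rigorous, i.e., showing that every triangle that wants to grow into one of those sets is already reducible or recorded in $\Xi$.
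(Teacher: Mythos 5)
Your proposal captures the right high-level mechanism — parametrize deformations of the triangle by a rotation parameter per facet, note that $\gamma$ is affine in those parameters once the ray-to-facet assignment is fixed, and deform to exhibit domination or a strict convex combination — and this does mirror the paper's tilting-space machinery (Definitions of $\T(B,\Y,R)$ and $\mathcal N(B,\Y,R)$ together with Lemma~\ref{obs:dimension}). However, there are three real gaps. First, the claim that the admissible region in $(t_1,t_2,t_3)$-space is ``cut out by a bounded number of inequalities of bounded degree'' is false as stated: lattice-freeness is a constraint over infinitely many integer points, and as you rotate a facet outward, new integer points become relevant. The paper sidesteps this by working only \emph{locally}: Lemma~\ref{lemma: S(B) full-dimensional} (via Martin's parametric-LP result) guarantees that for small enough $\epsilon$ the perturbed set $M(B+\epsilon\bar A)$ contains no integer points outside $Y(B)$, so only finitely many, already-known lattice-freeness conditions need to be preserved. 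Without some such lemma your admissible region is not a finitely-described semialgebraic set at all.

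Second, even where it is described, your admissible region is not a polytope and not even convex (you yourself introduce bilinear determinant conditions, and lattice-freeness is a conjunction of disjunctions), so ``keeping only the $\gamma$-images of its vertices'' is not a legitimate step; the affine-image-of-a-polytope intuition simply does not apply. Third — and this is the heart of the proposition — the polynomial bound is acknowledged but not proven, and your proposed mechanism for it (confining the relevant integer points to ``a region of polynomial bit-size'') does not yield polynomiality: a region describable with polynomially many bits can contain exponentially many lattice points. The mechanism the paper actually uses is entirely different: the Cook--Hartmann--Kannan--McDiarmid theorem (Lemma~\ref{rem:ray_cone}) that the integer hull of a polyhedron in fixed dimension has only polynomially many vertices and facets, combined with the geometric observation (Lemma~\ref{rem:cone_vertex}) that a non-reducible triangle's pinning integer points must be vertices, or facet supports, of the integer hulls of cones $C(r^{j_1},r^{j_2})$ generated by pairs of rays. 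This substitution — from ``small region'' to ``few integer-hull vertices'' — is not a detail; it is the entire content of the polynomial bound. The case analysis you wave at (Type~2 triangles where a facet carries multiple integer points and therefore cannot be rotated; the interplay with splits and quadrilaterals) is also where most of the proof's length lives, and your sketch does not show how your single uniform parametrization handles those configurations.
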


This proposition will be proved by carefully counting the
triangles and splits $M(B)$ such that $\gamma(B)$~is not dominated by a point
in $\Delta$ and is not a strict convex combination of points in~$\Delta$. To this end, we define the following subsets of $\R^k_+$:
\begin{align*}
  \Delta_i &= \{\, \gamma(B) \st B\in\R^{3\times2},\, M(B)\text{ is a Type $i$
    triangle}\,\}\quad\text{for $i=1,2,3$}\\\intertext{and}
  \Pi &= \{\, \gamma(B) \st B\in\R^{3\times2},\, M(B)\text{ is a maximal lattice-free split}\,\}.
\end{align*}
Note that these sets are not disjoint, as the same vector~$\gamma$ can be
realized by maximal lattice-free convex sets of different kinds.

We first develop some important concepts in Subsection~\ref{sec:tilt-space},
followed by the main counting arguments in Subsection~\ref{sec:counting}. In
Subsection~\ref{sec:proofs}, we prove Proposition~\ref{prop:almost-extreme}.  
It has the following theorem as a consequence, which is the most important
ingredient for the triangle closure result. 
\begin{theorem}\label{thm:polynomial}
There  exists a finite set $\Xi{} \subset \Delta$ such that if $\gamma$ is an extreme point of $\Delta '$, then $\gamma \notin \Delta \setminus \Xi{}$.
Furthermore, the cardinality $\#\Xi{}$ is polynomial in the encoding sizes of $f, r^1, \dots, r^k$.
\end{theorem}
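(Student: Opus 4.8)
The plan is to let $\Xi$ be precisely the finite set furnished by Proposition~\ref{prop:almost-extreme} and to argue by contradiction. Suppose $\gamma$ is an extreme point of $\Delta'$ and, contrary to the claim, $\gamma \in \Delta \setminus \Xi$. By Proposition~\ref{prop:almost-extreme}, one of two things must happen: either $\gamma$ is dominated by some $\gamma' \in \Delta$, or $\gamma$ is a strict convex combination of two distinct points $\gamma^1, \gamma^2 \in \Delta$, i.e.\ $\gamma = \tfrac12(\gamma^1 + \gamma^2)$. I would rule out both possibilities.

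In the first case, since $\Delta \subseteq \Delta'$ we have $\gamma' \in \Delta'$, and $\gamma - \gamma' \in \R^k_+ \setminus \{0\}$ by the definition of domination; hence $\gamma$ is not a minimal point of $\Delta'$. This contradicts Observation~\ref{obs:minimal}, which asserts that every extreme point of $\Delta'$ is minimal. In the second case, $\gamma^1, \gamma^2 \in \Delta \subseteq \Delta'$ are distinct, so neither can equal $\gamma = \tfrac12(\gamma^1+\gamma^2)$; thus $\gamma$ is a strict convex combination of two points of $\Delta'$ different from itself, contradicting extremality of $\gamma$. Either way we reach a contradiction, so no extreme point of $\Delta'$ lies in $\Delta \setminus \Xi$. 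The required bound — $\#\Xi$ polynomial in the encoding sizes of $f, r^1, \dots, r^k$ — is exactly the one asserted in Proposition~\ref{prop:almost-extreme}, so it carries over verbatim.

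The deduction above is short: it is essentially a one-line consequence of Proposition~\ref{prop:almost-extreme} combined with Observation~\ref{obs:minimal} and the definition of an extreme point. Accordingly, the real content, and the step I expect to be by far the hardest, is Proposition~\ref{prop:almost-extreme} itself — showing that all but polynomially many coefficient vectors $\gamma(B)$ arising from lattice-free $M(B)$ are either dominated within $\Delta$ or decompose as a strict convex combination inside $\Delta$. That is where the tilt-space machinery of Subsection~\ref{sec:tilt-space} and the case-by-case counting of Subsection~\ref{sec:counting}, carried out separately for the three triangle families $\Delta_1$, $\Delta_2$, $\Delta_3$ and for the split set $\Pi$, do all the work.
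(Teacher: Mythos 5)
Your proof is correct and essentially identical to the paper's: both take $\Xi$ from Proposition~\ref{prop:almost-extreme} and observe that, since $\Delta\subseteq\Delta'$, a dominated $\gamma$ would violate Observation~\ref{obs:minimal} while a strict convex combination within $\Delta\subseteq\Delta'$ would violate extremality. You merely spell out the two-case contradiction that the paper leaves to the reader.
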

\begin{proof}
  Let $\Xi$ be the set from  Proposition~\ref{prop:almost-extreme}. 
  Since $\Delta \subseteq \Delta'$, together with
  Observation~\ref{obs:minimal} and the definition of extreme point, this
  implies that $\Delta\setminus \Xi{}$ does not contain any extreme points of
  $\Delta'$. 
\end{proof}

\subsection{Tools}\label{sec:tilt-space} 


For fixed $f$,  given a set of rays $R \subseteq \R^m$ and a matrix $B=(b^1 ; \dots ; b^n)
\in \R^{n\times m}$, we refer to the set of \emph{ray intersections} $$ P(B,R) = \bigl\{\,p(B,r) \in \R^m\bigst
 r \in R,\ \psi_B(r) > 0 \bigr\}$$ 
where $p(B,r) =  f + \tfrac{1}{\psi_B(r)} r$ is the point where $r$ meets the boundary of the set $M(B)$.    Therefore, $P(B,R) \subset \partial M(B)$.  Furthermore, if $P(B^1, \{r^1, \dots, r^k\}) = P(B^2, \{r^1, \dots, r^k\})$, then $\gamma(B^1) = \gamma(B^2)$.  
If $p(B,r)$ is a vertex of $M(B)$, then we call $r$ a \emph{corner ray} of $M(B)$. 

Whenever
$\psi_B(r) > 0$, the set
$I_B(r) = \argmax_{i=1, \dots, n} b^i\cdot  r$ is the index set of all defining inequalities
of the polyhedron $M(B)$ that the ray intersection $p(B,r)$
satisfies with equality. In particular, for $m=2$, when all the inequalities corresponding to the rows
of $B$ are different facets of $M(B)$, we have $\#I_B(r)= 1$ when $r$ points to the
relative interior of a facet, and $\#I_B(r)= 2$ when $r$ points to a vertex of
$M(B)$, where $\# X$ denotes the cardinality of the set~$X$. 

Let $F_i(B) = M(B)\cap\{x \in \R^m \st b^i\cdot(x-f) = 1 \}$ for each $i = 1, \ldots, n$ (in what follows, $F_i(B)$ will usually be a facet of $M(B)$). Let $Y(B)$ be the set of integer points contained in $M(B)$.  Recall that if $M(B)$ is a maximal lattice-free convex set, then each facet $F_i(B)$ contains at least one integer point in its relative interior.  
In our proofs, it is convenient to choose, for every $i=1, \ldots, n$, 
a certain subset $Y_i\subseteq Y(B)\cap F_i(B)$ of the integer points on~$F_i(B)$. 

\begin{definition}
Let $\Y$ denote the tuple $(Y_1, Y_2, \ldots, Y_n)$.  The \emph{tilting
  space}  $\T(B, \Y, R)\subset \R^{n\times m}$ is defined as  
the set of matrices $A = (a^1; a^2; \ldots ;a^n)\in\R^{n\times m}$ that satisfy the
following conditions:
\begin{subequations}
  \begin{align}
    a^i\cdot (y - f) &= 1 && \text{for} \ y \in Y_i,\ i=1, \dots, n,\label{eq:tilting-1}\\
    a^{i}\cdot  r &= a^{i'}\cdot  r   && \text{for} \ i,i' \in I_B(r), \text{and all } r \in R\label{eq:tilting-2}\\
    a^{i}\cdot r&> a^{i'}\cdot r && \text{for} \ i \in I_B(r),\ i' \notin I_B(r), \text{and  all }r \in R
    .\label{eq:tilting-3}
  \end{align}
\end{subequations}
\end{definition}

Note that if $R' \supseteq R$, then $\T(B,\Y,R') \subseteq \T(B, \Y, R)$.   The tilting space~$\T(B, \Y, R)$ is defined for studying perturbations of the lattice-free set~$M(B)$. This is done by changing or tilting the facets of~$M(B)$ subject to certain constraints to construct a new lattice-free set $M(A)$. Constraint~\eqref{eq:tilting-1} requires that $F_i(A)$ must contain subset~$Y_i$ of integer points. Constraints~\eqref{eq:tilting-2} and~\eqref{eq:tilting-3} together imply that for any $r \in R$, the  ray intersection $p(A,r) = f + \frac{1}{\psi_A(r)} r$ lies on~$F_i(A)$ of~$M(A)$ if and only if the ray intersection $p(B,r) = \smash[t]{f + \frac{1}{\psi_B(r)} r}$ for~$M(B)$ lies on the corresponding $F_i(B)$ of~$M(B)$. Thus we have $I_A(r) = I_B(r)$.  In particular, this
means that if $r\in R$~is a corner ray of $M(B)$, then $r$ must also be a
corner ray for~$M(A)$ if $A \in \T(B,\Y, R)$.

\smallbreak

Note that $\T(B,\Y,R)$ is defined by linear equations and strict linear inequalities and,
since  $B\in \T(B,\Y, R)$, it is non-empty.  Thus it is a convex set
whose dimension is the same as that of the affine space
given by the equations \eqref{eq:tilting-1}~and~\eqref{eq:tilting-2} only. This motivates the following definition.

\begin{definition}
$\mathcal{N}(B, \Y, R)$ will denote the nullspace of the equations~\eqref{eq:tilting-1} and~\eqref{eq:tilting-2}, i.e., $$\mathcal{N}(B, \Y, R) = \bigg\{A =(a^1; \dots; a^n) \in \R^{n\times m} \mathrel{\bigg|}  \begin{array}{@{}cc@{}l@{}} a^i\cdot (y - f) = 0 && \text{for} \ y \in Y_i,\ i=1, \dots, n,\\
    a^{i}\cdot  r = a^{i'}\cdot  r   && \text{for} \ i,i' \in I_B(r) \text{ and all } r \in R\end{array}\,\bigg\}.$$ 
\end{definition}

If $R' \supseteq R$, then $\mathcal N(B,\Y, R') \subseteq N(B,\Y,R)$.  For many cases when $\gamma(B)$ is not extreme, we will find a matrix $\bar A \in \mathcal N(B,\Y, R)$   such that $\gamma(B)$ can be expressed as the convex combination of $\gamma(B + \epsilon \bar A)$ and $\gamma(B - \epsilon \bar A)$ and $M(B + \epsilon \bar A), M(B - \epsilon \bar A)$ are lattice-free polytopes.  If $\gamma(B) \in \Delta$, then $\gamma(B + \epsilon \bar A), \gamma(B - \epsilon \bar A)$ will also be in $\Delta$.   


For convenience, we shorten notation in the following way: for the rest of the paper we fix the input data $f$ and the set of rays $R = \{r^1, \dots, r^k\}$.  Furthermore, whenever the matrix $B$ is clear from context, we write $\Ny = \mathcal N(B, \Y, R)$, $\Ty = \T(B, \Y, R)$,  $P = P(B, R)$,
$p^j = p(B, r^j)$  for $j = 1, \dots, k$, and
$F_i = F_i(B)$.

Next we introduce a tool that helps to ensure that sets $M(B \pm \epsilon \bar A)$ are lattice-free. This will be done by utilizing now-classic results in the theory of parametric linear
programming.  Specifically, consider a parametric linear program, \begin{equation}\label{eq:param_lp}\sup\{\,
c(t)\cdot x : A(t) x \leq b(t)\,\} \in\R\cup\{\pm\infty\},\end{equation}
where all coefficients
depend continuously on a parameter vector $t$ within some parameter region
$\mathcal R \subseteq \R^q$.  
\begin{theorem}[D.~H.~Martin~\cite{martin-1975-continuity-maximum}, Lemma~3.1]
  \label{thm: locally-bounded-solution-set}
  Suppose that the solution set of \eqref{eq:param_lp} for $t=t_0$ is non-empty and bounded.  Then,
  in parameter space, there is an open neighborhood~$\mathcal O$ of~$t_0$ such that the
  union of all solution sets for $t\in \mathcal O$ is bounded. 
\end{theorem}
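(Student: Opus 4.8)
The plan is to argue by contradiction. If the conclusion fails, then for every $n$ the union of the solution sets of~\eqref{eq:param_lp} over the ball $\|t-t_0\|<1/n$ is unbounded, so we obtain parameters $t_n\to t_0$ and points $x_n$ in the solution set $S(t_n)$ with $\|x_n\|\to\infty$ (in particular each $S(t_n)$ is nonempty). Passing to a subsequence, the unit vectors $d_n:=x_n/\|x_n\|$ converge to some $d$ with $\|d\|=1$. Dividing the feasibility relation $A(t_n)x_n\le b(t_n)$ by $\|x_n\|$ and using continuity of the data together with $\|x_n\|\to\infty$, I get $A(t_0)d\le 0$; thus $d$ is a nonzero recession direction of $P(t_0):=\{x : A(t_0)x\le b(t_0)\}$. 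Since the solution set at $t_0$ is nonempty and bounded, the optimal value $v(t_0)$ is finite, so $c(t_0)\cdot d'\le 0$ for \emph{every} recession direction $d'$ of $P(t_0)$; and boundedness of $S(t_0)$ strengthens this to $c(t_0)\cdot d'<0$ for every \emph{nonzero} $d'$, since otherwise the ray $x_0+\R_{\ge 0}d'$ based at a point $x_0$ of $S(t_0)$ would be contained in $S(t_0)$. In particular $c(t_0)\cdot d<0$.

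To produce the contradiction I would then use optimality of the $x_n$: since $c(t_n)\cdot x_n=v(t_n)$ and $v(t_n)\ge c(t_n)\cdot z$ for every $z\in P(t_n)$, if I can exhibit points $z_n\in P(t_n)$ lying in a fixed ball $\B(0,R)$, then $c(t_n)\cdot x_n\ge c(t_n)\cdot z_n\ge -M$ for a constant $M$ (the coefficient vectors are bounded near $t_0$). On the other hand, $c(t_n)\cdot x_n=\|x_n\|\,(c(t_n)\cdot d_n)\to-\infty$, because $\|x_n\|\to\infty$ while $c(t_n)\cdot d_n\to c(t_0)\cdot d<0$. This contradiction finishes the proof.

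So the whole argument reduces to the following local stability statement for the feasible region, which is where the boundedness hypothesis on $S(t_0)$ is really used: \emph{there exist a neighbourhood $\mathcal O$ of $t_0$ and a radius $R$ such that $P(t)\cap\B(0,R)\neq\emptyset$ whenever $t\in\mathcal O$ and $P(t)\neq\emptyset$.} First, boundedness of $S(t_0)$ forces $A(t_0)$ to have full column rank: if $A(t_0)d_0=0$ for some $d_0\neq 0$, then a whole line through a point of $S(t_0)$ lies in $P(t_0)$, which makes either $v(t_0)$ infinite or $S(t_0)$ unbounded. Since matrix rank is lower semicontinuous, $A(t)$ then has full column rank for all $t$ near $t_0$, so every nonempty $P(t)$ is line-free. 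Fixing a point $x_0\in P(t_0)$, the only constraints that $x_0$ can violate for $t$ close to $t_0$ are those active at $x_0$ (the rest remain strictly satisfied by continuity), and a Hoffman-type error bound for linear inequality systems -- with the relevant Hoffman constants kept bounded by means of the full-column-rank property -- yields a uniform bound on $\dist(x_0,P(t))$, which supplies the desired $z_n$. I expect this last step to be the main obstacle: one must carefully handle the degenerate case in which $P(t_0)$ is not full-dimensional (i.e.\ has implicit equalities), where an arbitrarily small perturbation of the coefficients can send a vertex of $P(t)$ off to infinity; ruling this out is precisely the content of the classical stability theory of polyhedral systems on which this lemma rests. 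Everything else is a routine compactness-and-continuity argument.
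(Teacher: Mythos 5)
Note first that the paper does not supply a proof of this result: it is quoted as Lemma~3.1 of D.~H.~Martin's paper and invoked as a classical fact, so there is no in-paper argument to compare yours against. The remarks below therefore concern the soundness of your sketch on its own.

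The global strategy is reasonable and most of the steps are correct. Extracting the escape direction $d$ by normalizing a diverging sequence of optimal solutions, obtaining $A(t_0)d\le 0$ from feasibility, deducing $c(t_0)\cdot d'\le 0$ for every recession direction $d'$ of $P(t_0)$ from finiteness of the value, and sharpening this to $c(t_0)\cdot d'<0$ for every nonzero $d'$ from boundedness of $S(t_0)$ --- all of this is right, as is the observation that boundedness of $S(t_0)$ forces $A(t_0)$ to have trivial kernel, hence full column rank, and that this rank is preserved for $t$ near $t_0$. The closing contradiction (given bounded feasible points $z_n\in P(t_n)$, compare $c(t_n)\cdot x_n = v(t_n)\ge c(t_n)\cdot z_n\ge -M$ against $c(t_n)\cdot x_n=\|x_n\|\,c(t_n)\cdot d_n\to-\infty$) is also valid.

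The genuine gap is exactly where you flag it: you reduce the lemma to a local stability statement --- \emph{there exist $\mathcal O\ni t_0$ and $R$ with $P(t)\cap\B(0,R)\neq\emptyset$ for every $t\in\mathcal O$ with $P(t)\neq\emptyset$} --- and you do not prove it. This is not a small finishing touch; it is the heart of the lemma, and it is precisely where the boundedness hypothesis is doing its real work, so deferring it to ``classical stability theory of polyhedral systems'' leaves the proof incomplete. Moreover, the specific route you suggest does not obviously go through: a Hoffman-type error bound would indeed supply the $z_n$ if the Hoffman constants for the systems $A(t)x\le b(t)$ were uniformly bounded near $t_0$, but full column rank of $A(t_0)$ does \emph{not} guarantee this. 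The Hoffman constant is governed by the conditioning of the active $m\times m$ basis submatrices $A_I(t)$, and such a submatrix can be arbitrarily close to singular while $A(t_0)$ itself has full rank (think of three nearly concurrent lines in $\R^2$); indeed the ``implicit equality'' degeneracy you worry about is exactly the scenario in which the naive Hoffman-constant argument breaks. So as written, the feasibility step is asserted rather than proved, and the proposal should be regarded as an outline with a missing core lemma rather than a complete proof.
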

We use this theorem of parametric linear programming to prove the following lemma.
\begin{lemma}
\label{lemma: S(B) full-dimensional}
Let $B\in \R^{n\times m}$ be such that $\M(B)$ is a bounded maximal
lattice-free set.  Then for every $\bar A \in \R^{n\times m}$, there exists
$\delta > 0$ such that for all $0 < \epsilon < \delta$, the set $Y(B+\epsilon\bar A)$ of integer points contained
in $\M(B + \epsilon\bar A)$ is a subset of $Y(B)$.
\end{lemma}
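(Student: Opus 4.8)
The plan is to use Theorem~\ref{thm: locally-bounded-solution-set} to control the geometry of $\M(B+\epsilon\bar A)$ as $\epsilon\to 0$, and then argue that any integer point appearing in all these perturbed sets must already lie in $\M(B)$, hence in $Y(B)$. First I would set up the parametric linear program whose feasible region is $\M(B+t\bar A) = \{\,x \st (B+t\bar A)(x-f)\le e\,\}$, with parameter $t$ ranging in a neighborhood of $t_0 = 0$; here the objective can be taken trivial (or, if we want to invoke Martin's lemma literally in the form stated, any fixed linear objective, say maximizing some coordinate). Since $\M(B) = \M(B+0\cdot\bar A)$ is bounded by hypothesis, its solution set is non-empty and bounded, so Theorem~\ref{thm: locally-bounded-solution-set} gives an open interval $(-\delta_1,\delta_1)$ around $0$ and a fixed ball $\B(f,\rho)$ containing $\M(B+t\bar A)$ for all $t$ in that interval. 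Thus all the perturbed sets live inside one common compact set, and in particular each $Y(B+\epsilon\bar A)$ is a subset of the finite set $L = \Z^m\cap\B(f,\rho)$.

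Next I would argue that for $\epsilon$ small enough, no integer point outside $Y(B)$ can lie in $\M(B+\epsilon\bar A)$. Take any lattice point $z\in L\setminus Y(B)$. Since $z\notin \M(B)$, there is a row index $i$ with $b^i\cdot(z-f) > 1$, i.e. $b^i\cdot(z-f) = 1+\eta_z$ for some $\eta_z > 0$. The perturbed constraint value is $(b^i + \epsilon\bar a^i)\cdot(z-f) = 1+\eta_z + \epsilon\,\bar a^i\cdot(z-f)$, which stays strictly above $1$ as long as $\epsilon < \eta_z / |\bar a^i\cdot(z-f)|$ (and for all $\epsilon$ if $\bar a^i\cdot(z-f)\ge 0$). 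Hence $z\notin\M(B+\epsilon\bar A)$ for all sufficiently small $\epsilon > 0$. Because $L\setminus Y(B)$ is \emph{finite}, I can take $\delta$ to be the minimum of $\delta_1$ and all these finitely many thresholds $\eta_z/|\bar a^i\cdot(z-f)|$ over $z\in L\setminus Y(B)$; then for $0<\epsilon<\delta$ we have $Y(B+\epsilon\bar A)\subseteq L$ and $Y(B+\epsilon\bar A)\cap(L\setminus Y(B)) = \emptyset$, which together give $Y(B+\epsilon\bar A)\subseteq Y(B)$.

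The main subtlety — the only place one has to be a little careful — is ensuring the perturbed sets remain uniformly bounded, which is exactly what Martin's lemma is for: without it, an arbitrarily small tilt $\epsilon\bar A$ could in principle make $\M(B+\epsilon\bar A)$ unbounded (if a facet of $\M(B)$ becomes "parallel" in a bad way), and then $Y(B+\epsilon\bar A)$ would no longer be contained in a fixed finite set, breaking the finiteness argument that lets us pick a single $\delta$. Everything else is elementary: the containment $Y(B+\epsilon\bar A)\subseteq L$ follows from uniform boundedness, and the exclusion of the spurious lattice points follows from continuity of each linear form $x\mapsto (b^i+\epsilon\bar a^i)\cdot(x-f)$ in $\epsilon$ applied at the finitely many points of $L\setminus Y(B)$. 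Note we do not need $\M(B+\epsilon\bar A)$ itself to be lattice-free or maximal here — the lemma only asserts the inclusion of integer-point sets, which is all that is needed downstream.
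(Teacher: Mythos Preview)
Your proof is correct and follows essentially the same route as the paper's: apply Martin's lemma (Theorem~\ref{thm: locally-bounded-solution-set}) with the trivial objective so that the ``solution set'' coincides with the feasible region $\M(B+\epsilon\bar A)$, obtain a uniform bounded container, and then exclude each of the finitely many lattice points in that container but outside $\M(B)$ by continuity of a violated row in~$\epsilon$. The only cosmetic difference is that the paper parametrizes by the full matrix $A\in\R^{n\times m}$ while you parametrize by the scalar $t$ along the fixed direction~$\bar A$; one small caveat is that your parenthetical alternative (``any fixed linear objective'') would not work as stated, since Martin's lemma then only bounds the union of optimal faces, not of feasible regions---stick with the trivial objective, as the paper does.
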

\begin{proof}
  Consider the parametric linear program
  $$ \max \{\,0 \st a^i\cdot (x-f) \leq 1,\ i=1, \dots, n\,\}$$
  with parameters $t = A = (a^1;\dots;a^n) \in \R^{n\times m}$. 
  By the assumption of the lemma, the solution set for $t_0 = B = (b^1;\dots;b^n)$
  is bounded.  Let $\mathcal O$ be the open neighborhood of~$t_0$ from
  Theorem~\ref{thm: locally-bounded-solution-set},
  and let $\hat S$ be the union of all solution sets for $t\in \mathcal O$, which is by
  the theorem a bounded set.  
  
  For each of the finitely many lattice points $y \in \hat S \setminus \M(B)$, let 
  $i(y)\in\{1,\dots,n\}$ be an index of an inequality that cuts off~$y$, that
  is, $b^{i(y)}\cdot (y - f) > 1$.  Then, given any $\bar A \in \R^{n\times m}$, there exists $\delta > 0$ such that for all $0 < \epsilon < \delta$ such that $(b^{i(y)} + \epsilon \bar a^{i(y)})\cdot(y - f) > 1$ for all $y \in \hat S \setminus \M(B)$. Therefore, for all such $\epsilon$, we have $Y(B + \epsilon A) \subseteq Y(B)$.\end{proof}
 

\begin{lemma}[General tilting lemma] \label{obs:dimension}
Let $B\in\R^{n\times m}$ be such that $M(B)$ is a bounded lattice-free
set.
Suppose $\Y  = (Y_1, \dots, Y_n)$ is a covering of $Y(B)$, i.e., $Y(B)
\subseteq Y_1\cup\dots\cup Y_n$. For any $\bar A\in\Ny \setminus \{0\}$, there exists $\delta > 0$ 
such that for all $0 < \epsilon < \delta$ the following statements hold:
\begin{enumerate}[\rm(i)]
\item 
$I_B(r^j) = I_{B+ \epsilon \bar A}(r^j) =  I_{B- \epsilon \bar A}(r^j)$ for all $j = 1, \ldots, k$.
\item 
$\gamma(B)= \frac{1}{2}\gamma({B + \epsilon \bar A}) + \frac{1}{2} \gamma({B - \epsilon \bar A})$.
\item
Both $M(B \pm \epsilon \bar A)$ are lattice-free.
\item 
Suppose $m=2$ and let $\bar a^1$, \dots, $\bar a^n\in\R^2$ denote the rows
of~$\bar A$. Suppose there exists an index $i \in\{1,\dots, n\}$ such that $\bar
a^i \neq 0$, $\# Y_i = 1$, and $(F_i \cap P) \setminus \Z^2 \neq \emptyset$. 
Then  $\gamma(B)$ is a strict convex combination of $\gamma(B + \epsilon \bar A)$ and $\gamma(B - \epsilon \bar A)$. 
\end{enumerate}
\end{lemma}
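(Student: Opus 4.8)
The plan is to use parametric linear programming (via Lemma~\ref{lemma: S(B) full-dimensional}) together with the defining equations of $\Ny$ to control the effect of the perturbation $B \mapsto B \pm \epsilon\bar A$. First I would fix $\bar A \in \Ny\setminus\{0\}$ and observe that for each ray $r^j$, the index set $I_B(r^j)$ is determined by which inequalities $b^i\cdot r^j$ achieve the maximum; since the maximum is achieved strictly (strict inequality $b^i\cdot r^j > b^{i'}\cdot r^j$ for $i\in I_B(r^j)$, $i'\notin I_B(r^j)$) and there are finitely many rays, there is a $\delta_1>0$ so that for $0<\epsilon<\delta_1$ these strict inequalities are preserved for $B\pm\epsilon\bar A$. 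Here I need that $\bar A\in\Ny$ forces $(b^i + \epsilon\bar a^i)\cdot r^j$ to remain \emph{equal} across $i\in I_B(r^j)$ — this is exactly equation~\eqref{eq:tilting-2} read for the nullspace — so the tie among the maximizers is maintained exactly, not just approximately; combined with the strict-inequality perturbation bound this gives part~(i).

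For part~(ii), once $I_{B\pm\epsilon\bar A}(r^j) = I_B(r^j)$, the value $\psi_{B\pm\epsilon\bar A}(r^j) = (b^i\pm\epsilon\bar a^i)\cdot r^j$ for any $i$ in the common index set, which is affine in $\epsilon$; averaging the $+\epsilon$ and $-\epsilon$ versions recovers $b^i\cdot r^j = \psi_B(r^j)$ coordinatewise, hence $\gamma(B) = \tfrac12\gamma(B+\epsilon\bar A) + \tfrac12\gamma(B-\epsilon\bar A)$. For part~(iii), I would apply Lemma~\ref{lemma: S(B) full-dimensional}: shrinking $\delta$ further if necessary, $Y(B\pm\epsilon\bar A)\subseteq Y(B)$. (One subtlety: Lemma~\ref{lemma: S(B) full-dimensional} is stated for maximal lattice-free $M(B)$, whereas here $M(B)$ is merely a bounded lattice-free set; I would either invoke a maximal lattice-free set containing $M(B)$ and intersect, or note the parametric-LP argument of that lemma only used boundedness of the solution set — this is the point I'd need to be careful about.) Then the equations~\eqref{eq:tilting-1} of $\Ny$ guarantee that every $y\in Y_i$ still satisfies $(b^i\pm\epsilon\bar a^i)\cdot(y-f) = 1$, so no point of $Y(B) \subseteq Y_1\cup\dots\cup Y_n$ lands in the interior of $M(B\pm\epsilon\bar A)$; combined with $Y(B\pm\epsilon\bar A)\subseteq Y(B)$ this shows $M(B\pm\epsilon\bar A)$ is lattice-free.

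For part~(iv), the goal is to upgrade the convex combination of~(ii) to a \emph{strict} one, i.e., to show $\gamma(B+\epsilon\bar A)\neq \gamma(B-\epsilon\bar A)$, equivalently that some coordinate $\psi_{B\pm\epsilon\bar A}(r^j)$ genuinely depends on $\epsilon$, i.e., $\bar a^i\cdot r^j \neq 0$ for some $j$ with $i\in I_B(r^j)$. Pick $i$ as in the hypothesis: $\bar a^i\neq 0$, $\#Y_i = 1$, and there is a ray $r^j\in R$ with $p^j = p(B,r^j)\in F_i\setminus\Z^2$, so $i\in I_B(r^j)$ and $p^j$ is not the unique integer point $y\in Y_i$. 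Now $\bar a^i$ lives in the nullspace of the single equation $a^i\cdot(y-f)=0$ (the only constraint in $\Ny$ of type~\eqref{eq:tilting-1} touching row $i$ when $\#Y_i=1$), which is a line in $\R^2$ through the origin orthogonal to $y-f$; since $\bar a^i\neq 0$, $\bar a^i\cdot x = 0$ precisely on the line $\R(y-f)$. The point $p^j - f$ satisfies $b^i\cdot(p^j-f) = 1 = b^i\cdot(y-f)$, so $p^j - y$ is a nonzero vector in $\R^2$ parallel to $F_i$ (the edge direction), hence not parallel to $y-f$ unless $y-f$ is itself parallel to the edge — which cannot happen because $b^i\cdot(y-f) = 1\neq 0$ means $y-f$ is transverse to the facet $F_i$. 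It follows that $p^j - f$ is not on the line $\R(y-f)$ (since $p^j-f = (y-f) + (p^j-y)$ and $p^j-y\neq 0$ is not parallel to $y-f$), so $\bar a^i\cdot(p^j - f)\neq 0$; recalling $p^j - f = \tfrac1{\psi_B(r^j)}r^j$, this gives $\bar a^i\cdot r^j\neq 0$, hence $\psi_{B+\epsilon\bar A}(r^j)\neq \psi_{B-\epsilon\bar A}(r^j)$ for $0<\epsilon<\delta$, proving strictness. The main obstacle I anticipate is the bookkeeping in part~(iv) — correctly identifying which nullspace constraint pins down $\bar a^i$ when $\#Y_i = 1$ and extracting the non-orthogonality of $\bar a^i$ with $r^j$ — together with the technical gap between "maximal lattice-free" and "bounded lattice-free" in the hypothesis of Lemma~\ref{lemma: S(B) full-dimensional}.
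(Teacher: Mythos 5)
Your proof is correct and follows essentially the same route as the paper: parts (i)--(iii) rest on $B\pm\epsilon\bar A$ remaining in the tilting space together with Lemma~\ref{lemma: S(B) full-dimensional}, and part (iv) reduces to showing $\bar a^i\cdot r^j\neq0$ for a ray $r^j$ with $p^j\in F_i\setminus\Z^2$, which you obtain from the linear independence of $r^j$ and $y-f$ (the paper runs the same argument by contradiction). Your side remark is also on target: Lemma~\ref{lemma: S(B) full-dimensional} is stated for bounded \emph{maximal} lattice-free sets while here $M(B)$ is only assumed bounded lattice-free, but, as you observe, the proof of that lemma uses only boundedness of the solution set, so the mismatch is harmless.
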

\begin{proof}
Since $\bar A \in \Ny \setminus \{0\}$, $B \pm \epsilon
\bar A$ satisfies the equations~\eqref{eq:tilting-1} and~\eqref{eq:tilting-2}
for any~$\epsilon$, and there exists $\delta_1 > 0$ such that
$B \pm \epsilon \bar A$ satisfies the strict inequalities~\eqref{eq:tilting-3} 
for $0\leq \epsilon< \delta_1$.
Thus $B \pm \epsilon \bar A \in \mathcal T(\mathcal Y)$ for $0\leq \epsilon<
\delta_1$.
Let $\delta_2 > 0$ be
obtained by applying Lemma~\ref{lemma: S(B) full-dimensional}. Choose $\delta
= \min\{\delta_1, \delta_2\}$. 

\emph{Part (i).} This follows from the fact that $B \pm
\epsilon \bar A \in \mathcal T(\mathcal Y)$, 
and thus \eqref{eq:tilting-2} and \eqref{eq:tilting-3} hold, for all $0 < \epsilon < \delta$.

\emph{Part (ii).} This is a consequence of (i), since $\gamma(B)_j = \psi_B(r^j) = b^i\cdot r^j$ for any $i\in I_B(r^j)$, and $\gamma(B\pm\epsilon \bar A)_j = \psi_{B\pm\epsilon\bar A}(r^j) = (b^i\pm\epsilon\bar a^i)\cdot r^j$ for any $i\in I_{B\pm\epsilon\bar A}(r^j)$. 

\emph{Part (iii).} This follows from Lemma~\ref{lemma: S(B) full-dimensional} and the fact
that $Y(B)$ remains on the boundary of $M(B\pm \epsilon\bar A)$, due to
constraint~\eqref{eq:tilting-1}.

\emph{Part (iv).}
Since $(F_i \cap P) \setminus \Z^2 \neq \emptyset$, there exists a ray $r^j\in
R$ such that $p^j \in (F_i \cap P) \setminus \Z^2$. 
We will show that $\bar a^i \cdot r^j \neq 0$.
Suppose for the sake of contradiction that 
$\bar a^i \cdot r^j = 0$.  Let $y \in Y_i$.  By definition of $\Ny$,  $\bar
a^i \cdot (y - f) = 0$.  Since $r^j$ does not point to an integer point from
$f$, the vectors  $r^j$ and $y-f$ are not parallel.  Therefore, the system
$\bar a^i \cdot r^j = 0$, $\bar a^i \cdot (y-f) = 0$ has the unique solution
$\bar a^i =0$, which is a contradiction since we assumed $\bar a^i \neq 0$.
Hence, $\bar a^i \cdot r^j \neq 0$ and therefore $\psi_{B + \epsilon \bar A}
(r^j) \neq \psi_{B - \epsilon \bar A}(r^j)$.  Therefore, $\gamma(B)$ is a
strict convex combination of $\gamma(B + \epsilon \bar A)$ and $\gamma(B -
\epsilon \bar A)$. 
\end{proof}

\begin{figure}
\centering
\ifpdf
\input{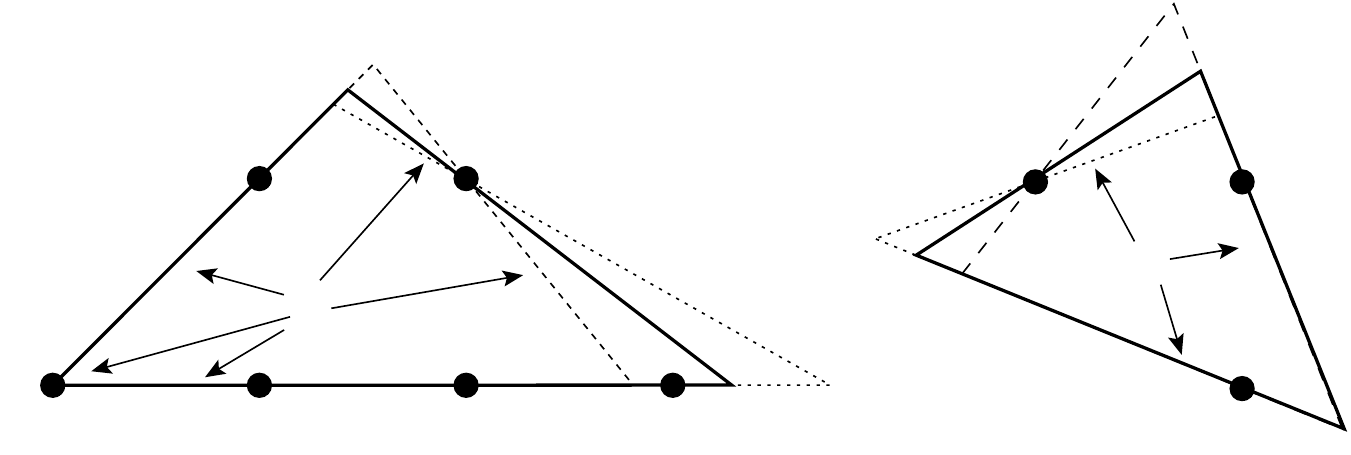tex_t}
\else
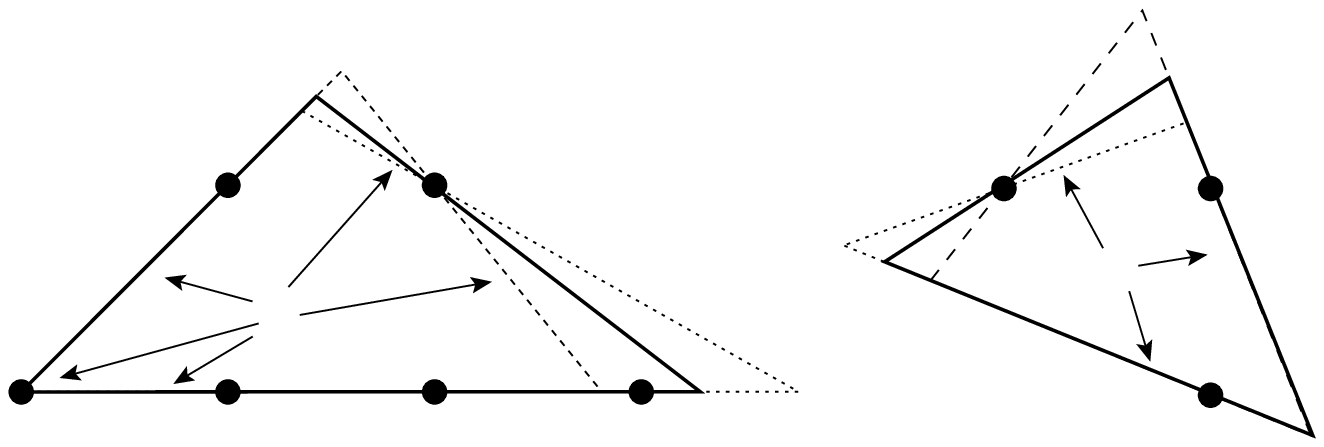
\fi
\caption{Single facet tilts: Tilting one facet of a polytope to generate new inequalities.  In
  both examples, there is a ray pointing to a non-integer point on the interior
  of the facet being tilted.  This ensures that the inequalities from the
  tilted sets are distinct, and therefore we see that $\gamma(B)$ is the strict convex combination of other points in $\Delta$. This is the assertion of
  Lemma~\ref{lemma:simple_tilts}.} 
\label{figure:simple}
\end{figure}

We will now apply this principle to obtain two lemmas for the case
$m=2$. 
The first simple application is to tilt a single facet of a polytope to
show that the corresponding inequality is a strict convex combination of other
inequalities, as shown in Figure
\ref{figure:simple}.  This is summarized in the following lemma. 
 
\begin{lemma}[Single facet tilt lemma]\label{lemma:simple_tilts}
Let $M(B)$ be a  lattice-free polytope for some matrix $B\in \R^{n\times 2}$. 
Suppose that $F_1(B) \cap \Z^2 = \{y^1\}$, $y^1 \in \relint( F_1(B))$, and $P
\cap F_1(B) \subset \relint(F_1(B))$, i.e., there are no ray intersections on
the lower-dimensional faces of $F_1(B)$, and  
$P\cap F_1(B) \setminus \Z^2\neq\emptyset$.  Then there exist $\bar a^1 \neq 0$ and $\epsilon> 0$
such that $\bar A = (\bar a^1; 0; \dots; 0)$ has the property that $\gamma(B)$ is a strict convex combination of $\gamma(B + \epsilon \bar A)$ and $\gamma(B - \epsilon \bar A)$  and $M(B + \epsilon \bar A)$ and $M(B - \epsilon \bar A)$ are lattice-free.
\end{lemma}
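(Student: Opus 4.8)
The plan is to verify the hypotheses of Lemma~\ref{obs:dimension} (the General tilting lemma), specializing to $n \times 2$ matrices, with $\Y$ chosen so that the interesting facet is $F_1$ and $Y_1 = \{y^1\}$. First I would set up the covering $\Y = (Y_1, \dots, Y_n)$ of $Y(B)$ by putting $Y_1 = \{y^1\} = F_1(B) \cap \Z^2$ and letting $Y_2, \dots, Y_n$ be any choice of subsets of the remaining integer points of $M(B)$ so that $Y(B) = Y_1 \cup \dots \cup Y_n$; this is possible since every integer point of $M(B)$ lies on some facet (indeed $M(B)$ is lattice-free, so $Y(B) \subseteq \partial M(B)$). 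With this $\Y$, $I_B(\cdot)$ is determined by $B$ and $R$ as in the definitions of Subsection~\ref{sec:tilt-space}.

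The crux is to exhibit a nonzero matrix $\bar A \in \Ny$ of the special form $\bar A = (\bar a^1; 0; \dots; 0)$. This means I need $\bar a^1 \neq 0$ satisfying the homogeneous system defining $\Ny$: namely $\bar a^1 \cdot (y^1 - f) = 0$, together with the constraints $\bar a^i \cdot r = \bar a^{i'} \cdot r$ for $i, i' \in I_B(r)$ and all $r \in R$ (and $a^i \cdot (y-f) = 0$ for $y \in Y_i$, which is automatic for $i \geq 2$ since $\bar a^i = 0$). Because we force $\bar a^i = 0$ for $i \geq 2$, the coupling constraint $\bar a^i \cdot r = \bar a^{i'} \cdot r$ is only a genuine restriction on $\bar a^1$ when $1 \in I_B(r)$ and simultaneously some $i' \geq 2$ lies in $I_B(r)$; that happens precisely when $r$ is a corner ray of $M(B)$ whose ray intersection is a vertex of $F_1$. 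By hypothesis $P \cap F_1(B) \subset \relint(F_1(B))$, so no ray intersection lands on a lower-dimensional face of $F_1$, hence no such corner ray exists, and the only constraint on $\bar a^1$ is the single linear equation $\bar a^1 \cdot (y^1 - f) = 0$ in $\R^2$. Since $y^1 \neq f$ (as $f$ is in the interior of $M(B)$ and $y^1$ is on the boundary), this equation has a one-dimensional solution space, so we may pick $\bar a^1 \neq 0$ in it. Thus $\bar A \in \Ny \setminus \{0\}$.

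Having produced $\bar A$, I would invoke Lemma~\ref{obs:dimension}: parts (iii) gives that $M(B \pm \epsilon \bar A)$ are lattice-free for all sufficiently small $\epsilon > 0$, and part (iv) applies because $m = 2$, the index $i = 1$ has $\bar a^1 \neq 0$, $\# Y_1 = 1$, and $(F_1 \cap P) \setminus \Z^2 \neq \emptyset$ by hypothesis; this yields exactly the conclusion that $\gamma(B)$ is a strict convex combination of $\gamma(B + \epsilon \bar A)$ and $\gamma(B - \epsilon \bar A)$. I expect the main (and essentially only) obstacle to be the bookkeeping in the previous paragraph: correctly arguing that the special shape of $\bar A$ neutralizes all the tilting-space coupling constraints \emph{except} the ones induced by corner rays sitting on $\partial F_1$, and then using the hypothesis $P \cap F_1(B) \subset \relint(F_1(B))$ to rule those out — everything else is a direct citation of Lemma~\ref{obs:dimension}.
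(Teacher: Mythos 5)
Your proof is correct and takes essentially the same route as the paper's: choose $Y_1=\{y^1\}$, pick $\bar a^1\neq 0$ orthogonal to $y^1-f$, observe that the hypothesis $P\cap F_1(B)\subset\relint(F_1(B))$ eliminates all coupling constraints~\eqref{eq:tilting-2} on $\bar a^1$, and then invoke parts (iii) and (iv) of the General tilting lemma. The only difference is that you spell out explicitly why no constraint of the form~\eqref{eq:tilting-2} restricts $\bar a^1$ (by tracking when $1\in I_B(r)$ and $\#I_B(r)\geq 2$), which the paper states more tersely.
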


\begin{proof}
Let $Y_1 = \{y^1\}$ and $Y_i=Y(B)\cap F_i$, $i = 2, \ldots, n$, so 
that $\Y=(Y_1,\dots,Y_n)$ is a covering of the set $Y(B)$ of integer points in~$\M(B)$.  

Let $\bar a^1 \neq 0$ such that $\bar a^1\cdot (y^1 - f) = 0$ and observe that
$\bar A = (\bar a^1; 0; \dots; 0) \in \Ny$ because there are no
constraints~\eqref{eq:tilting-2} involving~$\bar a^1$, as there are no ray intersections on
the lower-dimensional faces of $F_1(B)$.  By Lemma~\ref{obs:dimension}, for some $\epsilon   > 0$, $\gamma(B)$ is a strict convex combination of $\gamma(B + \epsilon \bar A)$ and $\gamma(B - \epsilon \bar A)$ and $M(B + \epsilon \bar A)$ and $M(B - \epsilon \bar A)$ are lattice-free.
\end{proof}

We give another application of the perturbation arguments to Type 3 triangles and quadrilaterals, before moving on to more specific counting arguments in the next subsection.

\begin{lemma}
\label{lemma:corner_rays_almost}
Let $B \in \R^{n \times 2}$ be such that $M(B)$ is a Type 3 triangle ($n=3$) or a
maximal lattice-free quadrilateral ($n=4$). Let $\relint(F_i) \cap
\Z^2 = \{y^i\}$ and set $Y_i = \{y^i\}$; then $(Y_1, \ldots, Y_n)$ form a covering of $Y(B)$. If $P \not\subset \Z^2$ and $M(B)$ has fewer than $n$ corner rays,
then $\gamma(B)$ is a strict convex combination of $\gamma(B^1)$ and $\gamma(B^2)$, where $B^1$, $B^2 \in \R^{n \times 2}$ are matrices such that $M(B^1)$, $M(B^2)$ are both lattice-free.  
\end{lemma}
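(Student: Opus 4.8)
The plan is to apply the General Tilting Lemma (Lemma~\ref{obs:dimension}, part (iv)) after exhibiting a suitable nonzero matrix $\bar A$ in the nullspace $\Ny$ associated with the covering $\Y = (Y_1,\dots,Y_n)$. First I would set up the linear-algebraic count. Each facet $F_i$ contributes exactly one equation of type~\eqref{eq:tilting-1}, since $\#Y_i = 1$; these are $n$ equations in the $2n$ unknowns given by the entries of $A = (a^1;\dots;a^n)$, and moreover they decouple across the rows $a^i$. The coupling comes only from the equations~\eqref{eq:tilting-2}, one for each corner ray: if $r^j$ is a corner ray with $I_B(r^j) = \{i,i'\}$, it imposes $a^i\cdot r^j = a^{i'}\cdot r^j$. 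Since $M(B)$ has fewer than $n$ corner rays, there are at most $n-1$ such coupling equations. So $\Ny$ is cut out by at most $n + (n-1) = 2n-1$ linear equations in $2n$ variables, hence $\dim \Ny \geq 1$, and we may pick $\bar A \in \Ny \setminus \{0\}$.

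The next step is to arrange that $\bar A$ has a nonzero row $\bar a^i$ sitting on a facet $F_i$ that contains a non-integer ray intersection, so that hypothesis~(iv) of Lemma~\ref{obs:dimension} is met. Because $P \not\subset \Z^2$, there is some ray $r^j$ with $p^j = p(B,r^j) \notin \Z^2$; let $F_{i_0}$ be a facet containing $p^j$ (for a Type 3 triangle or a maximal lattice-free quadrilateral, every ray intersection lies in the relative interior of a facet, since the vertices are the only lower-dimensional faces and a non-integer ray intersection at a vertex would be a corner ray — this needs a small remark, but one can instead simply choose $i_0$ to be \emph{any} index with $(F_{i_0}\cap P)\setminus\Z^2\neq\emptyset$). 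The issue is that the generic $\bar A$ produced above might have $\bar a^{i_0} = 0$. To fix this, I would refine the dimension count: restrict attention to the subspace of matrices $A$ whose rows other than $a^{i_0}$ are zero-extended appropriately, or, more cleanly, argue that if \emph{every} $\bar A \in \Ny$ had $\bar a^{i_0}=0$, then the equations would force $a^{i_0}$ alone to satisfy $a^{i_0}\cdot(y^{i_0}-f)=0$ together with the corner-ray constraints touching $F_{i_0}$; counting again, $F_{i_0}$ is incident to at most two corner rays (it has two vertices), giving at most $1+2 = 3 > 2$ constraints on the two-dimensional vector $a^{i_0}$ only if both vertices are corner rays — but the total corner ray count is $\le n-1$, so at least one facet is incident to at most one corner ray, and a more careful bookkeeping over which facet plays the role of $i_0$ closes this. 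Concretely: among the $n$ facets, the fewer-than-$n$ corner rays are distributed so that some facet $F_{i_0}$ with a non-integer ray intersection is incident to at most one corner ray; then $a^{i_0}$ is constrained by at most $1 + 1 = 2$ equations that, if independent, would force $a^{i_0}=0$, but one can choose $\bar a^{i_0}\neq 0$ in the (at least one-dimensional) solution space and extend it to a full $\bar A \in \Ny$ by solving the remaining decoupled/triangular system for the other rows.

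Finally, with $\bar A \in \Ny\setminus\{0\}$ having $\bar a^{i_0}\neq 0$, $\#Y_{i_0}=1$, and $(F_{i_0}\cap P)\setminus\Z^2\neq\emptyset$, part~(iv) of Lemma~\ref{obs:dimension} directly yields $\delta > 0$ and, for any $0 < \epsilon < \delta$, that $\gamma(B)$ is a strict convex combination of $\gamma(B+\epsilon\bar A)$ and $\gamma(B-\epsilon\bar A)$ with both $M(B\pm\epsilon\bar A)$ lattice-free; setting $B^1 = B+\epsilon\bar A$, $B^2 = B-\epsilon\bar A$ finishes the proof. I expect the main obstacle to be precisely the middle step — guaranteeing that the nonzero tilt can be taken to act on a facet carrying a non-integer ray intersection, rather than on some "useless" facet — which requires an honest pigeonhole argument on how the at-most-$(n-1)$ corner rays are shared among the $n$ facets and their vertices.
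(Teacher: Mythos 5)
Your overall plan is correct --- exhibit a nonzero $\bar A \in \Ny$ with $\bar a^{i_0} \neq 0$ on a facet carrying a non-integer ray intersection, then invoke Lemma~\ref{obs:dimension}(iv) --- and the dimension count giving $\dim\Ny \geq 1$ is fine. But the middle step, which you correctly flag as the obstacle, has a genuine gap and the fix you sketch does not close it. The pigeonhole assertion that ``some facet $F_{i_0}$ with a non-integer ray intersection is incident to at most one corner ray'' is simply false: for $n=4$ with three corner rays pointing to $F_1\cap F_2$, $F_2\cap F_3$ and $F_3\cap F_4$, the facets $F_2$ and $F_3$ each touch two corner rays, and nothing prevents the only non-integer ray intersection from lying on one of them. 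Your fallback sentence (``if every $\bar A\in\Ny$ had $\bar a^{i_0}=0$, then the equations would force $a^{i_0}$ alone to satisfy\dots'') is also not a valid deduction, because the corner-ray equations~\eqref{eq:tilting-2} couple $a^{i_0}$ with its neighbors; there is no isolated subsystem in $a^{i_0}$ alone whose rank you can count this way.

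The paper avoids all of this with a single trick you did not find: it \emph{enlarges} $R$ to a set $R' \supseteq R$ containing \emph{exactly} $n-1$ corner rays pointing to $n-1$ distinct vertices of $M(B)$. This makes the coupling graph on the facets a single path $F_1\text{--}F_2\text{--}\cdots\text{--}F_n$, and then the key claim is that every $\bar A \in \mathcal N(B,\Y,R')\setminus\{0\}$ has \emph{all} rows nonzero: if $\bar a^i = 0$, then the adjacent coupling forces $\bar a^{i\pm 1}\cdot r = 0$, which together with $\bar a^{i\pm 1}\cdot\y^{i\pm 1}=0$ and the linear independence of $r$ and $\y^{i\pm 1}$ (the ray points to a vertex, $y^{i\pm 1}$ is in the relative interior) gives $\bar a^{i\pm1}=0$, hence $\bar A = 0$ by induction. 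Since $\mathcal N(B,\Y,R') \subseteq \Ny$ and all rows of $\bar A$ are nonzero, the facet index $i_0$ is free to be chosen as any $i\in I_B(r)$ for a non-integer ray intersection, with no bookkeeping about how the corner rays are distributed. Your approach could in principle be repaired by observing that fewer than $n$ corner rays means the coupling graph is a forest and then propagating a choice of $\bar a^{i_0}\neq 0$ along its tree, but the chain-padding trick is shorter and eliminates exactly the case analysis you anticipated would be the difficulty.
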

\begin{proof}
Let $Y_i = \{y^i\}$ for $i=1, \dots, n$.  Let $R' \supseteq R = \{r^1, \dots,
r^k\}$ such that $R'$ contains exactly $n-1$ corner rays pointing to different
vertices of $M(B)$ from $f$.  We examine the null space $\mathcal N(B, \Y,
R')$.  
With $n-1$ corner rays, $\mathcal N(B,\Y, R')$ is the set of matrices $A=(a^1;\dots;a^n)$ satisfying the following system of equations,
where, for convenience, we define $\y^i:= y^i - f$:
\begin{equation*}
a^i \cdot \y^i = 0  \ \text{for} \ i=1,\dots,n  \hspace{.5cm} \text{ and } \hspace{.5cm}    a^i \cdot r^i = a^{i+1} \cdot r^i \ \text{for} \  i = 1,\dots, n-1.
\end{equation*}
We have assumed, without loss of generality, that the corner rays and facets are numbered such that we have corner rays $r^i \in F_i \cap F_{i+1}$ for $i=1,\dots, n-1$. Note that $\bar y^i$ is linearly
independent from $r^i$ for $i=1, \dots, n-1$ and linearly independent from
$r^{i-1}$ for $i=2, \dots, n$, because $y^i$ lies in the relative interior of $F_i$ and the rays point to the vertices.   

There are $2n-1$ equations and $2n$ variables, so $\dim  \mathcal N(B,\Y, R') \geq 1$. Choose $\bar A = (\a^1; \dots ; \a^n) \in \mathcal N(B,\Y, R')\setminus\{0\}$. Notice that for $i=1,\dots, n-1$, if $\a^i = 0$, then $\a^{i+1}$ must satisfy $\a^{i+1} \cdot r^i =0$ and $\a^{i+1} \cdot \y^{i+1} = 0$, which implies that $\a^{i+1} = 0$, since $\bar y^{i+1}$ and $r^i$ are linearly independent. Similarly, for $i=2,\dots, n$, if $\a^i = 0$, then $\a^{i-1}$ must satisfy $\a^{i-1} \cdot r^{i-1} = 0$ and $\a^{i-1} \cdot \y^{i-1} = 0$, which implies that $\a^{i-1} = 0$. By induction, this shows that if $\a^i = 0$ for any $i=1,\dots, n$, then $\bar A = 0$, which contradicts our assumption.  Hence, $\bar a^i \neq 0$ for all $i=1,\dots, n$. 

Since $R' \supseteq R$, we have that $\mathcal N(B, \Y, R') \subseteq \mathcal N(B, \Y, R)$; thus, $\bar A \in \mathcal N(B, \Y, R) \setminus \{0\}$. Since $P \not\subset \Z^2$, there exists $r \in R$ that
does not point to an integer point.  Let $i \in I_B(r)$.  Since $\bar a_i \neq 0$, by applying Lemma~\ref{obs:dimension} with $\bar A$ we obtain $\epsilon > 0$ such that $\gamma(B)$ is a strict convex combination of $\gamma(B + \epsilon \bar A)$ and $\gamma(B - \epsilon \bar A)$ and $M(B + \epsilon \bar A), M(B - \epsilon \bar A)$ are lattice-free. 
\end{proof}



\subsection{Counting Arguments}\label{sec:counting}

An important ingredient in the proof of Proposition~\ref{prop:almost-extreme} is the following consequence of the Cook--Hartmann--Kannan--McDiarmid theorem on
the polynomial-size description of the integer hulls of polyhedra in fixed
dimension \cite{cook-hartmann-kannan-mcdiarmid-1992} combined with an algorithm by Hartmann~\cite{hartmann-1989-thesis} for
enumerating all the vertices, which runs in polynomial time in fixed
dimension.  
\begin{lemma}\label{rem:ray_cone}
Given two rays $r^1$ and $r^2$ in $\mathbb{R}^2$, 
we define the affine cone
$$C(r^1, r^2) = \{\,x \in \mathbb{R}^2 \st x = f + s_1r^1 + s_2r^2 \text{ for } 
s_1,s_2 \geq 0\,\}.$$ The number of facets and vertices of the integer hull
$$(C(r^1,r^2))_\IH = \conv(C(r^1, r^2)\cap\Z^2)$$ is bounded by a polynomial in the
binary encoding sizes of $f, r^1,r^2$. Furthermore, the facets and vertices of the integer hull can be enumerated in polynomial time in the binary encoding sizes of $f, r^1, r^2$.
\end{lemma}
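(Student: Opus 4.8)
The plan is to reduce the claim about the affine cone $C(r^1,r^2)$ to the Cook--Hartmann--Kannan--McDiarmid theorem on integer hulls of rational polyhedra in fixed dimension, and then invoke Hartmann's vertex-enumeration algorithm. First I would deal with the degenerate cases: if $r^1$ and $r^2$ are parallel (or one of them is zero), then $C(r^1,r^2)$ is at most a halfline or a line translated from $f$, and the integer hull is trivially described by a constant number of facets and vertices, so from now on assume $r^1, r^2$ are linearly independent. Since the data $f, r^1, r^2$ are rational, $C(r^1,r^2)$ is a rational polyhedral cone with apex $f$: indeed it is the set of $x$ with $A(x-f) \le 0$ for a rational $2\times 2$ matrix $A$ whose rows are the inward normals to the two extreme rays, and these normals have binary encoding size polynomially bounded in the sizes of $r^1, r^2$ (they are obtained by a $90^\circ$ rotation of $r^1$ and $r^2$, with signs chosen so that the cone is on the correct side). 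Thus $C(r^1,r^2)$ is a rational polyhedron whose facet description has encoding size polynomial in that of $f, r^1, r^2$.

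Next I would apply the theorem of Cook, Hartmann, Kannan and McDiarmid~\cite{cook-hartmann-kannan-mcdiarmid-1992}: for a rational polyhedron $Q = \{x \in \R^d \st Cx \le d\}$ with $d$ fixed, the number of vertices of the integer hull $Q_\IH = \conv(Q \cap \Z^d)$ is bounded by a polynomial in the encoding size of $(C,d)$ (the degree of the polynomial depends only on $d$, here $d=2$). Applying this with $Q = C(r^1,r^2)$ gives that $(C(r^1,r^2))_\IH$ has polynomially many vertices in the encoding size of $f, r^1, r^2$. Since the integer hull is a $2$-dimensional polyhedron (or lower-dimensional, handled in the degenerate analysis), its number of facets is at most its number of vertices plus a constant, so the facet bound follows immediately. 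For the ``furthermore'' part, I would invoke Hartmann's algorithm~\cite{hartmann-1989-thesis}, which enumerates all vertices of the integer hull of a rational polyhedron in time polynomial in the input size when the dimension is fixed; running it on $Q = C(r^1,r^2)$ enumerates the vertices of $(C(r^1,r^2))_\IH$ in polynomial time, and the facets can then be recovered in polynomial time by taking the convex hull of these finitely many points in the plane (e.g.\ by a standard planar convex hull computation, or by checking consecutive vertex pairs in angular order around the hull).

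The only genuinely substantive input is the Cook--Hartmann--Kannan--McDiarmid bound together with Hartmann's enumeration procedure; the rest is bookkeeping. Accordingly, the main thing to be careful about is that $C(r^1,r^2)$ is presented in the lemma in the ``inner'' (generator) form $f + s_1 r^1 + s_2 r^2$, whereas the cited theorem and algorithm take a polyhedron in the ``outer'' (inequality) form, so the step that needs attention is verifying that the conversion from generators to facet inequalities for this particular cone is both possible and size-preserving up to polynomial factors---which in dimension two is immediate, since each generating ray contributes exactly one facet inequality obtained by a coordinate rotation. With that conversion in hand, the lemma is a direct citation.
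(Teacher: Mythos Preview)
Your proposal is correct and matches the paper's approach exactly: the paper does not give a formal proof of this lemma, but simply states it as a consequence of the Cook--Hartmann--Kannan--McDiarmid theorem~\cite{cook-hartmann-kannan-mcdiarmid-1992} combined with Hartmann's polynomial-time vertex-enumeration algorithm~\cite{hartmann-1989-thesis} in fixed dimension. Your write-up fills in the routine details (handling degenerate cases, converting the generator description to an inequality description, and recovering facets from vertices) that the paper leaves implicit.
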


In the following, the closed line segment between two points $x^1$ and $x^2$ will be denoted by $[x^1, x^2]$, and the open line segment will be denoted by $(x^1, x^2)$. 

\begin{lemma}\label{rem:cone_vertex}
Consider any lattice-free convex set $M(B)$ for $B\in\R^{n\times 2}$. 
Suppose there exist two rays $r^{j_1}, r^{j_2}$ such that the corresponding
ray intersections $p^{j_1}, p^{j_2}$ are distinct and lie on a facet $F$ of $M(B)$.  
\begin{enumerate}[(i)]
\item If $[p^{j_1}, p^{j_2}] \cap \Z^2 = \{y\}$ and $y \in (p^{j_1},
  p^{j_2})$, then $y$ is a vertex of the integer hull 
  $(C(r^{j_1}, r^{j_2}))_\IH$. Moreover, the line $\aff(F)$ is a supporting hyperplane
  for $(C(r^{j_1}, r^{j_2}))_\IH$, i.e., $(C(r^{j_1}, r^{j_2}))_\IH$ lies on
  one side of this line.
\item 
  If $[p^{j_1}, p^{j_2}] \cap \Z^2$ contains at least two points, then the line
  $\aff(F)$ contains a facet of the integer hull $(C(r^{j_1}, r^{j_2}))_\IH$.
\end{enumerate}
\end{lemma}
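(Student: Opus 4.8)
The plan is to reduce both parts to a single structural fact: the line $\aff(F)$ is a supporting hyperplane of the integer hull $(C(r^{j_1},r^{j_2}))_\IH$, and the face it exposes is exactly $\conv\bigl([p^{j_1},p^{j_2}]\cap\Z^2\bigr)$. Granting this, part~(i) is immediate because that exposed face is the singleton $\{y\}$, so $y$ (which lies in $(C(r^{j_1},r^{j_2}))_\IH$) is an exposed point, hence a vertex; and part~(ii) is immediate because the exposed face is then a segment of positive length, hence a facet of the two-dimensional polyhedron $(C(r^{j_1},r^{j_2}))_\IH$.

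First I would fix the row $b^i$ of $B$ for which $F = M(B)\cap\{x : b^i\cdot(x-f)=1\}$ and put $H^- = \{x : b^i\cdot(x-f)\le1\}$ and $H^+ = \{x : b^i\cdot(x-f)\ge1\}$, so that $M(B)\subseteq H^-$ and $f\in\intr M(B)$. Since $p^{j_\ell} = f + \tfrac{1}{\psi_B(r^{j_\ell})}r^{j_\ell}\in F$ with $\psi_B(r^{j_\ell})>0$, we get $b^i\cdot r^{j_\ell}=\psi_B(r^{j_\ell})>0$ for $\ell=1,2$; and $r^{j_1},r^{j_2}$ must be linearly independent, since $r^{j_2}=\alpha r^{j_1}$ would force $\psi_B(r^{j_2})=\alpha\psi_B(r^{j_1})$, hence $\alpha>0$, hence $p^{j_1}=p^{j_2}$, contradicting distinctness. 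For a cone point $x=f+s_1r^{j_1}+s_2r^{j_2}$ one has $b^i\cdot(x-f)=s_1\psi_B(r^{j_1})+s_2\psi_B(r^{j_2})$, an affine function of $(s_1,s_2)$ with positive coefficients, so the affine bijection $(s_1,s_2)\mapsto x$ carries the corner triangle $\{\,s_1,s_2\ge0,\ s_1\psi_B(r^{j_1})+s_2\psi_B(r^{j_2})\le1\,\}$ onto $C(r^{j_1},r^{j_2})\cap H^-$, which therefore equals $T:=\conv\{f,p^{j_1},p^{j_2}\}$, and it carries the outer edge of that triangle onto $C(r^{j_1},r^{j_2})\cap\aff(F)=[p^{j_1},p^{j_2}]$.

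The lattice-freeness hypothesis enters next: $T\setminus[p^{j_1},p^{j_2}]\subseteq\intr M(B)$, because any such point equals $\lambda f+(1-\lambda)q$ with $\lambda\in(0,1]$ and $q\in[p^{j_1},p^{j_2}]\subseteq M(B)$ and thus lies in $\intr M(B)$ by the line-segment principle; since $\intr M(B)$ is lattice-free, $T$ contains no lattice point off $[p^{j_1},p^{j_2}]$. Hence every lattice point of $C(r^{j_1},r^{j_2})$ lies in $H^+$: it lies either in $[p^{j_1},p^{j_2}]\subseteq\aff(F)$ (when it is in $H^-$), or strictly in the interior of $H^+$ (otherwise). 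Therefore $(C(r^{j_1},r^{j_2}))_\IH\subseteq H^+$, so $\aff(F)$ supports it and the exposed face is $(C(r^{j_1},r^{j_2}))_\IH\cap\aff(F)$. Finally, a convex combination of lattice points of $C(r^{j_1},r^{j_2})$ can lie on $\aff(F)$ only if every lattice point it uses lies on $\aff(F)$ (each contributes $b^i\cdot(x-f)\ge1$, with weighted average exactly $1$); so this exposed face equals $\conv\bigl((C(r^{j_1},r^{j_2})\cap\aff(F))\cap\Z^2\bigr)=\conv\bigl([p^{j_1},p^{j_2}]\cap\Z^2\bigr)$, which yields (i) and (ii) via the first paragraph. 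For (ii) one also records that $(C(r^{j_1},r^{j_2}))_\IH$ is two-dimensional, since the full-dimensional cone $C(r^{j_1},r^{j_2})$ contains balls of arbitrarily large radius and hence three affinely independent lattice points, so a one-dimensional face is genuinely a facet.

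I expect the main obstacle to be pinning down the identity $C(r^{j_1},r^{j_2})\cap H^-=\conv\{f,p^{j_1},p^{j_2}\}$ together with the containment $T\setminus[p^{j_1},p^{j_2}]\subseteq\intr M(B)$ — that is, recognizing that one should slice the cone by the halfspace $H^-$ lying ``below'' $\aff(F)$, where the sliced-off triangle inherits lattice-freeness from $M(B)$ — along with the attendant non-degeneracy bookkeeping (positivity of the $\psi_B(r^{j_\ell})$, linear independence of the two rays, two-dimensionality of the integer hull). Once these are in hand, passing from ``$\aff(F)$ supports $(C(r^{j_1},r^{j_2}))_\IH$'' to the vertex/facet conclusions is routine convex geometry.
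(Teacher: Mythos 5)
Your proof is correct and takes essentially the same route as the paper: both arguments observe that the slice $C(r^{j_1},r^{j_2}) \cap H^-$ (where $H^-$ is the halfspace bounded by $\aff(F)$ containing $f$) sits inside $M(B)$, so it inherits lattice-freeness, which pushes $(C(r^{j_1},r^{j_2}))_\IH$ into the opposite halfspace and makes $\aff(F)$ a supporting hyperplane with exposed face $\conv([p^{j_1},p^{j_2}]\cap\Z^2)$. You do fill in a couple of points the paper glosses over --- the line-segment principle showing the edges $[f,p^{j_\ell}]\setminus\{p^{j_\ell}\}$ (not merely $\intr(T)$) miss lattice points, and the two-dimensionality of the integer hull needed for the one-dimensional exposed face in part~(ii) to count as a facet --- but these are elaborations of the same idea rather than a different approach.
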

\begin{proof}
Suppose $H$ is the halfspace corresponding to $F$ that contains $f$. Then $H \cap C(r^{j_1}, r^{j_2}) \subset M(B)$ and since $M(B)$ does not contain any integer points in its interior, neither does $H \cap C(r^{j_1}, r^{j_2})$. Since we assume $[p^{j_1}, p^{j_2}] \cap \Z^2$ is non-empty and $p^{j_1}, p^{j_2}$ lie on the line defining $H$ (and also $F$), this line is a supporting hyperplane for $(C(r^{j_1}, r^{j_2}))_\IH$. 

If $[p^{j_1}, p^{j_2}] \cap \Z^2$ contains the single point $y$ and $y \in (p^{j_1}, p^{j_2})$, then clearly $y$\ is an extreme point of $(C(r^{j_1}, r^{j_2}))_\IH$. 

If $[p^{j_1}, p^{j_2}] \cap \Z^2$ contains two (or more) points, then the line defining $H$ (and also $F$) defines a facet of $(C(r^{j_1}, r^{j_2}))_\IH$.
\end{proof}

\begin{obs}[Integral ray intersections]\label{obs:int_intersections}
Let $R=\{r^1, \ldots, r^k\}$. Then there is a unique $\gamma \in \R^k$ such that $\gamma = \gamma(B)$,  $M(B)$ is a lattice-free convex set and $P(B, R) \subset \Z^2$. (Note that there may be multiple matrices $B \in \R^{n\times 2}$ yielding $\gamma$.)
\end{obs}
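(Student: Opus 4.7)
The plan is to show that the hypothesis $P(B,R)\subset\Z^2$, combined with the lattice-freeness of $M(B)$, pins down each coordinate $\gamma_j=\psi_B(r^j)$ as a quantity depending only on $f$ and $r^j$. Since $p(B,r^j)=f+\tfrac{1}{\psi_B(r^j)}r^j$ lies on the open ray $L_j=\{\,f+tr^j\st t>0\,\}$ whenever $\psi_B(r^j)>0$, I will perform a case analysis on whether $L_j$ contains any integer points.

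First, suppose $L_j\cap\Z^2=\emptyset$. Then $\psi_B(r^j)>0$ would force $p(B,r^j)\in L_j\setminus\Z^2$, contradicting $P(B,R)\subset\Z^2$. Hence $\psi_B(r^j)=0$, so $\gamma_j=0$ is determined by $f,r^j$ alone. Second, suppose $L_j\cap\Z^2\neq\emptyset$, and let $t_j>0$ be the smallest parameter for which $q^j:=f+t_jr^j\in\Z^2$; this is well-defined from rationality and is intrinsic to $(f,r^j)$. I claim $\psi_B(r^j)=1/t_j$. Indeed, if $\psi_B(r^j)=0$, then $r^j$ is a recession direction of $M(B)$, so $q^j\in f+\cone(r^j)\subseteq M(B)$, and since $f\in\intr M(B)$ the segment $[f,q^j]$ lies in $\intr M(B)$, placing the integer point $q^j$ (or an interior translate thereof) in the interior of $M(B)$—contradicting lattice-freeness. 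So $\psi_B(r^j)>0$, and the hypothesis gives $p(B,r^j)=f+\tfrac{1}{\psi_B(r^j)}r^j\in L_j\cap\Z^2$, which by minimality of $t_j$ forces $1/\psi_B(r^j)\ge t_j$. If $1/\psi_B(r^j)>t_j$, then $q^j$ lies strictly between $f\in\intr M(B)$ and the boundary point $p(B,r^j)\in\partial M(B)$, hence $q^j\in\intr M(B)$ by convexity, again contradicting lattice-freeness. Therefore $\psi_B(r^j)=1/t_j$.

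Combining the two cases coordinatewise yields a single vector $\gamma\in\R^k$ determined entirely by $f$ and $R$, establishing uniqueness. The only subtle point—and it is routine—is the convexity argument that an integer point strictly between an interior point and a boundary point of a convex set lies in the interior, which follows immediately from the fact that $\intr M(B)$ is convex and contains $f$ together with a neighborhood. I do not anticipate a serious obstacle here; the observation is essentially a pigeonhole statement along each ray.
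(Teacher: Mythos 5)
Your proof is correct and takes essentially the same approach as the paper, which argues tersely that $P(B_1,R)=P(B_2,R)$ for any two admissible matrices because lattice-freeness forces each ray intersection onto the first lattice point along that ray. Your coordinate-by-coordinate derivation of $\psi_B(r^j)\in\{0,1/t_j\}$ is simply a more explicit rendering of the same argument, and it cleanly handles the case $\psi_B(r^j)=0$ that the paper's one-line proof glosses over.
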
\begin{proof}
  Let $\mathcal B$ be the family of matrices $B \in \R^{n \times 2}$ such that
  $P(B,R) \subset \Z^2$ and $M(B)$ is a lattice-free set.  Then $P(B_1,R) =
  P(B_2, R)$ for all $B_1,B_2\in\mathcal B$ because $M(B_1)$ and $M(B_2)$ are
  lattice-free.   Hence, $\gamma(B)$ is the same vector for all $B\in\mathcal B$.
\end{proof}

\begin{proposition}[Counting Type 3 triangles]\label{prop:counting_type3} 
There exists a finite subset $\Xi_3\subseteq\Delta_3$ such that for any $\gamma\in\Delta_3\setminus\Xi_3$, 
  there exist $\gamma^1, \gamma^2 \in \Delta$ such that $\gamma$ is a strict
  convex combination of $\gamma^1$ and $\gamma^2$. Moreover, the cardinality
  of $\Xi_3$ is bounded polynomially in the binary encoding sizes of $f, r^1,
  \dots, r^k$.  Specifically, $\Xi_3$ can be chosen as the set of all
  $\gamma(B)$ such that $M(B)$ is a Type 3 triangle and one of
  the following holds, where $R = \{r^1, \ldots, r^k\}$:\\ 
\textbf{Case a.} $P(B , R) \subset \Z^2$.\\
\textbf{Case b.} $M(B)$ has  three corner rays, that is, $\verts(B) \subseteq P(B, R)$.
\end{proposition}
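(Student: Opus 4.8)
The plan is to verify directly that the explicitly prescribed set $\Xi_3$ (the $\gamma(B)$ coming from Type 3 triangles in Case a or Case b) has the two required properties, treating the convex-combination dichotomy and the polynomial cardinality bound separately. The dichotomy will be an essentially immediate consequence of Lemma~\ref{lemma:corner_rays_almost}; the real work is the counting.

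\emph{The dichotomy.} Take $\gamma\in\Delta_3\setminus\Xi_3$ and any representation $\gamma=\gamma(B)$ with $M(B)$ a Type 3 triangle, so $n=3$. Since $\gamma\notin\Xi_3$, the matrix $B$ satisfies neither Case a nor Case b; that is, $P(B,R)\not\subset\Z^2$ and $\verts(B)\not\subseteq P(B,R)$, i.e., $M(B)$ has fewer than three corner rays. Setting $Y_i=\{y^i\}$ where $\{y^i\}=\relint(F_i)\cap\Z^2$, the tuple $(Y_1,Y_2,Y_3)$ covers $Y(B)$ because a Type 3 triangle has exactly three boundary lattice points, one in the relative interior of each edge. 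Hence Lemma~\ref{lemma:corner_rays_almost} with $n=3$ applies and produces lattice-free sets $M(B^1),M(B^2)$ with $\gamma(B)$ a strict convex combination of $\gamma^1:=\gamma(B^1)$ and $\gamma^2:=\gamma(B^2)$, and $\gamma^1,\gamma^2\in\Delta$ by definition of $\Delta$.

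\emph{Cardinality.} For Case a, Observation~\ref{obs:int_intersections} says there is a single vector realized by any lattice-free $M(B)$ with $P(B,R)\subset\Z^2$, so Case a adds at most one element. For Case b, the plan is to show that a Type 3 triangle with three corner rays is rigidly determined by those rays together with its three boundary lattice points. Given such a triangle, label the vertices $v_1,v_2,v_3$ with edges $F_i=[v_i,v_{i+1}]$ (indices mod $3$) and fix rays $r^{j_i}\in R$ with $p^{j_i}=v_i$; these are distinct since a ray from $f$ meets $\partial M(B)$ in a single point. Writing $v_i=f+t_ir^{j_i}$ with $t_i>0$ and $y^i=(1-\lambda_i)v_i+\lambda_iv_{i+1}$ with $\lambda_i\in(0,1)$, the unique expansion $y^i-f=a_ir^{j_i}+b_ir^{j_{i+1}}$ (the two rays are linearly independent because $f\notin\aff(F_i)$) forces $a_i=(1-\lambda_i)t_i>0$ and $b_i=\lambda_it_{i+1}>0$; eliminating $t_i=a_i/(1-\lambda_i)=b_{i-1}/\lambda_{i-1}$ yields, with indices mod $3$, the $3\times3$ linear system
\[
a_i\lambda_{i-1}+b_{i-1}\lambda_i=b_{i-1},\qquad i=1,2,3,
\]
whose coefficient matrix has determinant $a_1a_2a_3+b_1b_2b_3$, which is positive since all $a_i,b_i>0$. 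Hence $(\lambda_1,\lambda_2,\lambda_3)$, then $(t_1,t_2,t_3)$, then the triangle $M(B)$, then $\gamma(B)$, are uniquely determined by $r^{j_1},r^{j_2},r^{j_3},y^1,y^2,y^3$. It remains to count admissible data: there are $O(k^3)$ ordered triples of distinct rays, and for each, Lemma~\ref{rem:cone_vertex}(i) (applicable since $[p^{j_i},p^{j_{i+1}}]\cap\Z^2=\{y^i\}$ and $y^i$ lies in the open segment) shows $y^i$ is a vertex of the integer hull $(C(r^{j_i},r^{j_{i+1}}))_\IH$, of which there are only polynomially many by Lemma~\ref{rem:ray_cone}. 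Therefore Case b contributes only polynomially many vectors, and together with Case a the cardinality of $\Xi_3$ is polynomial in the encoding sizes of $f,r^1,\dots,r^k$.

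\emph{Main obstacle.} The dichotomy is routine bookkeeping on top of Lemma~\ref{lemma:corner_rays_almost}; the crux is the Case-b count, and the decisive step is recognizing the rigidity of a Type 3 triangle with three corner rays, captured by the nonsingularity of the displayed $3\times3$ system. Once that is in place, the count reduces to enumerating vertices of the cone integer hulls, which is handled by Lemma~\ref{rem:ray_cone}.
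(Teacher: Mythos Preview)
Your proof is correct and follows essentially the same approach as the paper: the dichotomy via Lemma~\ref{lemma:corner_rays_almost}, Case~a via Observation~\ref{obs:int_intersections}, and Case~b by combining the rigidity of a Type~3 triangle given three corner rays and three facet-interior lattice points with the vertex count from Lemmas~\ref{rem:cone_vertex} and~\ref{rem:ray_cone}. The only difference is that the paper delegates the rigidity step to Proposition~\ref{prop:unique-triangle} in the appendix (a $6\times 6$ determinant computation on the tilting-space equations), whereas you give a self-contained parametrization leading to a $3\times 3$ system with determinant $a_1a_2a_3+b_1b_2b_3>0$; your argument is a clean and slightly more elementary alternative to the appendix proposition.
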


\begin{proof}
\Step{1}\ 
Let $\gamma = \gamma(B)$ for some fixed $B\in \R^{3 \times 2}$ such that $M(B)$ is a Type 3  triangle. 
By Lemma~\ref{lemma:corner_rays_almost}, if $\gamma \in \Delta_3 \setminus \Xi_3$, then there exist $\gamma^1, \gamma^2 \in \Delta$ such that $\gamma$ is a strict convex combination of $\gamma^1$ and $\gamma^2$.  Therefore, we are left to determine the cardinality of $\Xi_3$.\medskip

\noindent \Step{2}\ 
We now bound the cardinality of $\Xi_3$ by considering each case. \smallskip

\noindent \textit{Case a.} Observation~\ref{obs:int_intersections} shows that
there is a unique $\gamma\in\Xi_3$ corresponding to this case.\smallskip

\noindent \textit{Case b.} $M(B)$ has three corner rays.  
First we pick any triplet of pairwise distinct rays, say $r^{j_1},r^{j_2},r^{j_3} \in R$, as the corner rays; there
are $O(k^3)$ such triplets. 
Because we are constructing a Type 3 triangle, there needs to be an integer
point~$y_3\in(p^{j_1},p^{j_2})$.  By Lemma~\ref{rem:cone_vertex}~(i), $y_3$ is
a vertex of $(C(r^{j_1},r^{j_{2}}))_\IH$.  By the same argument, there exist
integer points $y_2 \in (p^{j_1},p^{j_3})$ and $y_1 \in (p^{j_2},p^{j_3})$
that are vertices of $(C(r^{j_1},r^{j_{3}}))_\IH$ and
$(C(r^{j_2},r^{j_{3}}))_\IH$, respectively.  
By Proposition~\ref{prop:unique-triangle} in the Appendix, 
a triangle is uniquely determined by three corner rays and and one point on the
relative interior of each facet.
Thus, we can use a triplet of rays and a
vertex from each integer hull of the three cones spanned by consecutive rays
to uniquely define the triangle. These are polynomially bounded in number by
Lemma~\ref{rem:ray_cone}. 

Thus the number of elements of $\Xi_3$ corresponding to each case has a
polynomial bound, and the result is proved. 
\end{proof}

\begin{proposition}[Counting splits]\label{prop:counting_splits}
There exists a finite subset $\Xi_0\subseteq\Pi$ such that for any $\gamma\in\Pi\setminus\Xi_0$, 
  there exist $\gamma^1, \gamma^2 \in \Delta$ such that $\gamma$ is a strict
  convex combination of $\gamma^1$ and $\gamma^2$. Moreover, the cardinality
  of $\Xi_0$ is bounded polynomially in the binary encoding sizes of $f, r^1,
  \dots, r^k$. Specifically, $\Xi_0$ can be chosen as the set of all
  $\gamma(B)$ such that $M(B)$ is a maximal lattice-free split and one of the
  following holds, where $R = \{r^1, \ldots, r^k\}$:

\noindent\textbf{Case a.} $P(B, R) \subset \Z^2$.\\
\textbf{Case b.} There exists $j \in \{1, \ldots, k\}$ such that $r^j$ lies in the recession cone of the split.\\
\textbf{Case c.} $\#(\conv(P(B, R)\cap F_1) \cap \Z^2) \geq 2$.
\end{proposition}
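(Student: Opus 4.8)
The plan is to establish the two halves separately: that every $\gamma=\gamma(B)\in\Pi\setminus\Xi_0$ is a strict convex combination of two elements of $\Delta$, and that $\#\Xi_0$ is polynomially bounded.

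\emph{The strict convex combination.} Fix $B\in\R^{3\times2}$ with $M(B)$ a maximal lattice-free split; by Lov\'asz's classification $M(B)=\{x:\lfloor\pi\cdot f\rfloor\le\pi\cdot x\le\lceil\pi\cdot f\rceil\}$ for a primitive integer vector $\pi$ with $\pi\cdot f\notin\Z$. Denote the two facets by $F_1,F_2$ and the primitive lattice vector spanning the recession cone by $v$, and assume none of Cases a, b, c holds. Failure of Case b gives $\psi_B(r^j)>0$ for all $j$, so every ray intersection $p^j$ is defined and lies on $F_1$ or $F_2$; failure of Case a gives some $j_0$ with $p^{j_0}\notin\Z^2$, and after relabelling the facets we may assume $p^{j_0}\in F_1$. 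Failure of Case c gives that $\conv(P\cap F_1)$ contains at most one lattice point; let $y^1$ be that point, or, if there is none, a lattice point of $F_1$ flanking the segment $\conv(P\cap F_1)$. Let $b^1$ be the row of $B$ normal to $F_1$, normalized so $b^1\cdot(y^1-f)=1$, and pick $\bar a\neq0$ with $\bar a\cdot(y^1-f)=0$; for small $\epsilon>0$ the rows $b^1\pm\epsilon\bar a$ define lines $F_1^\pm$ through $y^1$ tilted from $F_1$ in opposite directions. The heart of the argument is to produce, for all small $\epsilon>0$, matrices $B^\pm\in\R^{3\times2}$ with $M(B^\pm)$ lattice-free whose three facets are $F_1^\pm$, $F_2$, and a single transverse capping facet, arranged so that every ray leaving $M(B)$ through $F_1$ leaves $M(B^\pm)$ through $F_1^\pm$ and every ray leaving through $F_2$ still leaves through $F_2$. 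Granting this, $\psi_{B^\pm}(r^j)=(b^1\pm\epsilon\bar a)\cdot r^j=\psi_B(r^j)\pm\epsilon\,\bar a\cdot r^j$ on the rays leaving through $F_1$ and $\psi_{B^\pm}(r^j)=\psi_B(r^j)$ on the rays leaving through $F_2$, so $\gamma(B)=\tfrac12\gamma(B^+)+\tfrac12\gamma(B^-)$ with $\gamma(B^\pm)\in\Delta$; and since $r^{j_0}$ does not point to a lattice point, the line through $f$ in direction $r^{j_0}$ is not parallel to $F_1$ and meets it only at $p^{j_0}\neq y^1$, so $r^{j_0}$ is not parallel to $y^1-f$ and the system $\bar a\cdot r^{j_0}=\bar a\cdot(y^1-f)=0$ would force $\bar a=0$. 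Hence $\bar a\cdot r^{j_0}\neq0$ and $\gamma(B^+)\neq\gamma(B^-)$, making the combination strict. (If instead $p^{j_0}\in F_2$, tilt $F_2$, by the symmetric argument.)

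The role of failure of Case c is exactly to make the capping facet available: the lattice points of $F_1$ other than $y^1$ sit at $y^1+tv$, $t\in\Z\setminus\{0\}$, and a small tilt of $F_1$ about $y^1$ turns all of those on one side of $y^1$ into interior points of the wedge bounded by $F_1^\pm$ and $F_2$; a transverse cap excises them, and it can be slid between $\conv(P\cap F_1)\cup\conv(P\cap F_2)$ and $y^1\pm v$ precisely when $\conv(P\cap F_2)$ does not straddle two lattice points on that side, which is what failure of Case c (for $F_2$, after relabelling) guarantees. The main obstacle is that a split is unbounded, so the General Tilting Lemma~\ref{obs:dimension} and Lemma~\ref{lemma: S(B) full-dimensional}, which both presuppose boundedness, cannot be invoked; the sets $B^\pm$ must be built by hand, and the verification that they are lattice-free while realizable by $3\times2$ matrices requires careful bookkeeping of the lattice points on $F_1$ and $F_2$, including the fact that they need not be aligned along the recession direction. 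This is the technical core.

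\emph{The cardinality bound.} We count the $\gamma(B)\in\Xi_0$ produced by each case. By Observation~\ref{obs:int_intersections} there is a single vector $\gamma$ realized by any lattice-free set all of whose ray intersections are integral, so Case a contributes at most one element. If $r^j$ lies in the recession cone of the split then $\pi$ is, up to sign, the unique primitive integer vector orthogonal to $r^j$, and the maximal split with that recession direction and $f$ in its interior is thereby determined; so Case b contributes at most $k$ elements. Finally, if $\conv(P\cap F_1)$ contains two lattice points, choose rays $r^{j_1},r^{j_2}$ whose intersections are the endpoints of that segment; then $[p^{j_1},p^{j_2}]\cap\Z^2$ has at least two points, so by Lemma~\ref{rem:cone_vertex}(ii) the line $\aff(F_1)$ contains a facet of $(C(r^{j_1},r^{j_2}))_\IH$. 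By Lemma~\ref{rem:ray_cone} there are polynomially many such facets for each of the $O(k^2)$ choices of $(r^{j_1},r^{j_2})$, and $\aff(F_1)$ determines $\pi$ and hence the maximal split; so Case c contributes polynomially many elements. Summing these bounds shows $\#\Xi_0$ is polynomial in the binary encoding sizes of $f,r^1,\dots,r^k$, which completes the proof.
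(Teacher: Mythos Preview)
Your overall strategy---tilt $F_1$ about a lattice point $y^1$ and cap to obtain lattice-free triangles---is the same as the paper's, and your cardinality argument matches it essentially verbatim. The gap is in the construction you flag as ``the technical core.''

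You describe the cap as a line that ``can be slid between $\conv(P\cap F_1)\cup\conv(P\cap F_2)$ and $y^1\pm v$,'' and you justify the existence of such a separating line by appealing to failure of Case~c for $F_2$. Both the geometric picture and the justification are wrong. The ray intersections on $F_2$ may lie arbitrarily far along $F_2$ in the recession direction~$v$, well past $y^1\pm v$, so no line can separate all of~$P$ from $y^1\pm v$ in general; and failure of Case~c for $F_2$ is neither what is assumed (nor needed). If you try to place the cap beyond $P\cap F_2$ and roughly transverse to~$v$, the lattice points $y^1 - v, y^1 - 2v, \dots$ on $F_1$ land in the interior of your triangle, destroying lattice-freeness.

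The paper handles this by a detour that sidesteps the unboundedness obstacle cleanly. It first replaces the split by a bounded lattice-free \emph{quadrilateral} $M(\hat B)$ with $\gamma(\hat B)=\gamma(B)$: take $F_1(\hat B)=[x^1,x^2]\subset F_1$ with $(P\cap F_1)\cup\{y^1\}\subsetneq(x^1,x^2)\subsetneq(y^1-v,y^1+v)$, and $F_2(\hat B)=[x^3,x^4]\subset F_2$ with $x^3,x^4\in\Z^2$ chosen far enough out that $P\cap F_2\subset(x^3,x^4)$ and $f$ is interior. The two caps $[x^1,x^3]$ and $[x^2,x^4]$ are oblique lines joining a non-integer point on $F_1$ (just inside $y^1\pm v$) to an integer point on $F_2$. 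Because $M(\hat B)$ is bounded, Lemma~\ref{lemma:simple_tilts} applies directly to tilt $F_1(\hat B)$ and certify that the two perturbed quadrilaterals are lattice-free. Only then does one drop the appropriate cap to obtain a triangle $M(A)\in\R^{3\times2}$, whose lattice-freeness follows from the containment $\intr(M(A))\subseteq\intr(M(\hat B+\epsilon\bar A))\cup\intr(M(B))$: the thin sliver above $F_1$ created by the tilt is covered by the quadrilateral, and everything else lies in the original split. The oblique cap excludes $y^1 - v, y^1-2v,\dots$ at the level of $F_1$ while still reaching past all of $P\cap F_2$ at the level of $F_2$; this is exactly what your ``separating'' cap cannot do.
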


\begin{proof}
\noindent \Step{1}\ 
Consider $\gamma(B) \in \Pi \setminus \Xi_0$, and so $M(B)$ is a maximal lattice-free split such that none of Case a, Case b, or Case c hold.
So we suppose that, possibly by exchanging the rows of~$B$, no
ray in $R$ lies in the recession cone of the split, $ P(B, R)\cap F_1
\setminus \Z^2 \neq \emptyset$, and $\#(\conv(P(B, R)\cap F_1) \cap \Z^2) \leq
1$.

We will first construct a lattice-free quadrilateral $M(\hat B)$ such that $\gamma(B) = \gamma(\hat B)$.
We will consider the sub-lattice of~$\Z^2$ contained in the linear space
parallel to $F_1$. We use the notation $v(F_1)$ to denote a primitive lattice
vector which generates this one-dimensional lattice. 
Choose $y^1 \in \Z^2$ such that $P(B,R) \cap F_1 \subset (y^1 - v(F_1), y^1 + v(F_1))$. 
%
Pick any $x^1, x^2 \in F_1$ such that  
$$(P(B, R) \cap F_1) \cup \{y^1\} \subsetneq (x^1, x^2) \subsetneq (y^1 - v(F_1), y^1 + v(F_1)). $$

We can assume that $F_2$ corresponds to the facet opposite $F_1$, with another exchange of the rows of $B$ if necessary. Next, choose distinct integer points $x^3, x^4 \in F_2 \cap
\Z^2$ such that $P(B, R) \cap F_2 \subset (x^3, x^4)$ and $f \in \intr(\conv(\{x^1, x^2, x^3, x^4\}))$.  
Now, let $\hat B = (\hat b^1;\hat b^2;\hat b^3;\hat b^4)\in \R^{ 4 \times 2}$ such that $M(\hat B) = \conv( \{x^1, x^2, x^3, x^4 \})$  and let $F_1(\hat B) = [x^1, x^2], F_2(\hat B) = [x^3, x^4], F_3(\hat B) = [x^1, x^3]$ and $F_4(\hat B) = [x^2, x^4]$ (see Figure \ref{figure:SplitTilts}~(a)).  
The set $M(\hat B)$ is a lattice-free quadrilateral with no corner rays.  By construction, the ray intersections are the same for $M(\hat B)$ and $M(B)$, i.e., $P(\hat B, R) = P(B, R)$, and therefore $\gamma(\hat B) = \gamma(B)$.  Furthermore, $F_1(\hat B) \cap P(B, R) \subset \relint(F_1(\hat B))$, that is, all ray intersections on $F_1(\hat B)$ are contained in its relative interior.     
Since $P(B, R) \cap F_1(\hat B) \not \subset \Z^2$ and is non-empty, by Lemma~\ref{lemma:simple_tilts}, $\gamma(\hat B) = \gamma(B)$ is a strict convex combination of $\gamma(\hat B + \epsilon \bar A)$ and $\gamma(\hat B - \epsilon \bar A)$, where $ \bar A = (\bar a^1; 0;0;0)$ and $\bar a^1 \neq 0$.   

Lastly, we need to show that there exist $A,A' \in \R^{3\times 2}$ such that $\gamma(A) = \gamma(\hat B + \epsilon \bar A)$ and $\gamma(A') = \gamma(\hat B - \epsilon \bar A)$, and $M(A), M(A')$ are lattice-free triangles. Since the cases are similar, we will just show that such a matrix $A$ exists. More concretely, we want to exhibit a matrix $A$ such that $P(A, R) = P(\hat B + \epsilon \bar A, R)$ and that $M(A)$ is lattice-free. As $\bar A$ comes from Lemma \ref{lemma:simple_tilts}, $\bar a^1 \cdot v(F_1) \neq 0$.  Suppose, without loss of generality, that $\bar a^1 \cdot v(F_1) > 0$.     Let $a^1 = \hat b^1 + \epsilon \bar a^1$,  $a^2 = \hat b^2$, and $a^3 = \hat b^3$ and $A = (a^1;a^2;a^3)$ (see Figure \ref{figure:SplitTilts} (b)).    Let $\alpha > 0$.  Then 
$$(\hat b^1 + \bar a^1)\cdot (y^1 + \alpha v(F_1) - f) 
=  (\hat b^1 + \bar a^1)\cdot (y^1 - f) + \alpha \hat b^1 \cdot v(F_1)+ \alpha \bar a^1\cdot v(F_1)   = 1 + 0 + \alpha \bar a^1 \cdot v(F_1) 
> 1,$$
 and therefore, $y^1 + \alpha v(F_1) \notin M(A)$ for all $\alpha > 0$.   
 Recalling that $M(B)$ is a split, it
 follows that $\intr(M(A))\setminus \intr(M(B)) \subseteq \intr(M(\hat B +
 \epsilon \bar A))$ and $\intr(M(A)) \cap \intr(M(B)) \subseteq \intr(M(B))$.
 Therefore, $\intr(M(A)) \subseteq \intr(M(\hat B +
 \epsilon \bar A)) \cup \intr(M(B))$.  Because of this inclusion, $M(A)$ is lattice-free.

\begin{figure}[p]
\ifpdf
\input{figureSplitTilts.pdftex_t}
\else
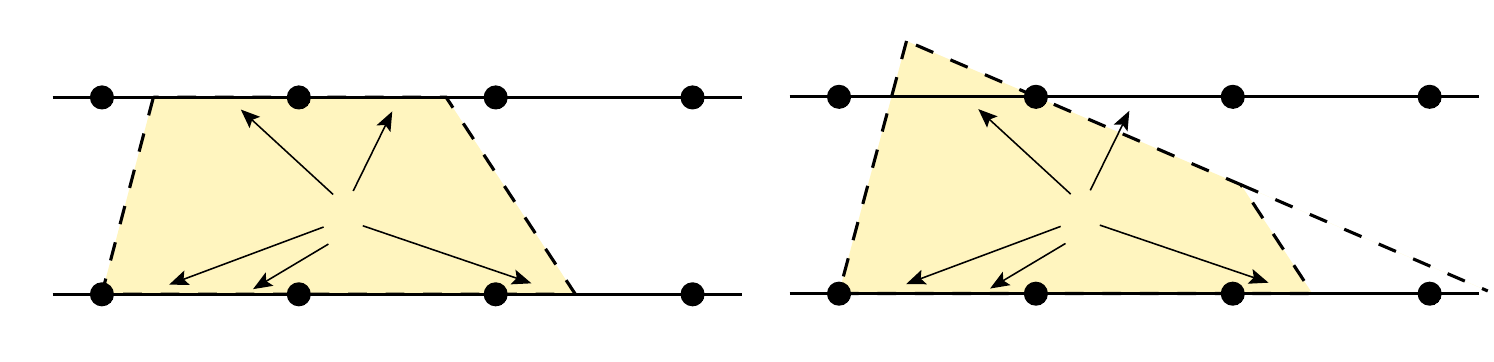
\fi
\caption{ Geometry of necessary conditions for splits in proof of Proposition \ref{prop:counting_splits}.  (a) depicts the lattice-free quadrilateral $M(\hat B)$ that is created such that $\gamma(B) = \gamma(\hat B)$.  (b) shows the lattice-free quadrilateral $M(\hat B + \epsilon \bar A)$ and the lattice-free triangle $M(A)$.  We see that $P(A, R) = P(\hat B + \epsilon \bar A, R)$ and hence $\gamma(A) = \gamma(\hat  B + \epsilon \bar A)$. }
\label{figure:SplitTilts}
\end{figure}
\medbreak

\noindent \Step{2}\ We now bound the cardinality of $\Xi_0$ by considering each case.  \\
\indent \textit{Case a.} $P(B, R) \subset \Z^2$.
Observation~\ref{obs:int_intersections} shows that 
there is a unique $\gamma\in\Xi_0$ corresponding to this case.\smallskip

The two remaining cases are illustrated in Figure~\ref{fig:count-splits}.\smallskip  
\begin{figure}[p]
\centering
\ifpdf
\input{figureSplitFacetParallel.pdftex_t}
\else
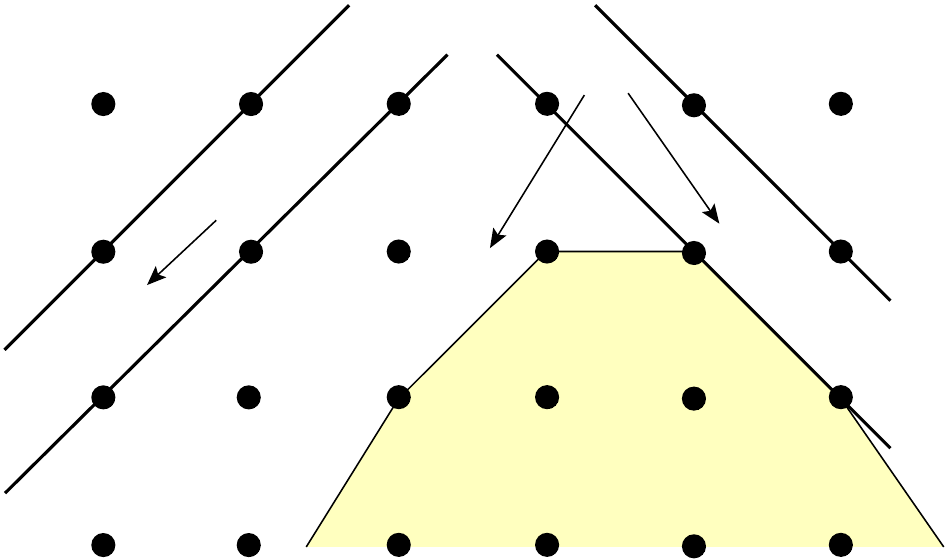
\fi
\caption{Step 3 of the proof of Proposition~\ref{prop:counting_splits}: Uniquely determining the split $M(B)$ for Cases (b) and (c).  Case (b) requires only the ray $r^j$ which is in the recession cone of the split, whereas Case (c) is determined by a facet of $(C(r^{j_1}, r^{j_2}))_\IH$.}\label{fig:count-splits}
\end{figure}

\indent \textit{Case b.}  A ray
direction~$r^j$ is parallel to the split.  There are at most $k$ such
ray directions, and thus at most~$k$ splits in this case.\smallbreak

\indent \textit{Case c.} There exist $p^{j_1}, p^{j_2}\in P(B, R)$  such that
$\# ([p^{j_1}, p^{j_2}] \cap \Z^2) \geq 2$, and therefore, the split must run
parallel to a facet of $(C(r^{j_1},r^{j_2}))_\IH$ by
Lemma~\ref{rem:cone_vertex}~(ii), of which there are only polynomially many. There
are only ${ k \choose 2}$ ways to choose two rays for this possibility.\smallbreak 

Since each case has a polynomial bound, we conclude that $\# \Xi_0$ is polynomially bounded as well.
\end{proof}

\begin{proposition}[Counting Type 1 triangles]\label{prop:counting_type1}
  There exists a finite subset $\Xi_1\subseteq\Delta_1$, such that for any $\gamma\in\Delta_1\setminus(\Xi_1\cup\Pi\cup\Delta_2)$, 
  there exist $\gamma^1, \gamma^2 \in \Delta$ such that $\gamma$ is a strict convex combination of $\gamma^1$ and $\gamma^2$
  or there exists $\gamma' \in \Delta$ such that $\gamma$ is dominated by $\gamma'$. Moreover, the cardinality of $\Xi_1$ is bounded polynomially in the binary encoding sizes of $f, r^1, \dots, r^k$.
Specifically, $\Xi_1$ is chosen as the set of all $\gamma(B)$ such that $M(B)$
is a Type~1 triangle, there exist distinct points $p^{j_1},
p^{j_2} \in \verts(B) \cap F_3 \cap P(B, R)$, i.e., $F_3$ has two corner rays, and one of the following holds:\\
\textbf{Case a.} $f \notin M(S_3)$.\\
\textbf{Case b.} $f \in M(S_3)$, and $P(B, R) \not\subset M(S_3)$.

\noindent Here $R=\{r^1, \ldots, r^k\}$ and $S_3 \in \R^{3\times 2}$ is a matrix such that $M(S_3)$ is a maximal lattice-free split with the property that one facet of $M(S_3)$ contains $F_3$ and $M(S_3) \cap \intr(M(B)) \neq \emptyset.$
\end{proposition}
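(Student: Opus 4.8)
The plan is to fix a Type~1 triangle $M(B)$ with $\gamma(B)=\gamma$ and recall that every Type~1 triangle is a unimodular image of $\conv\{(0,0),(2,0),(0,2)\}$: three integral vertices, the midpoint of each edge as its only relatively interior integer point, and the vertex opposite an edge sitting on the second lattice line parallel to that edge. Distinguish cases by the number $c$ of vertices of $M(B)$ that are ray intersections; since a hit vertex lies on exactly two facets, $M(B)$ has a facet carrying no corner ray precisely when $c\le1$. Suppose first $c\le1$. Choose a facet $F_i$ with no corner ray, let $y^i$ be its relatively interior integer point, set $Y_i=\{y^i\}$, and extend to a covering $\Y$ of $Y(B)$ by assigning each vertex of $M(B)$ to a facet through it that carries a corner ray. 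A short computation shows that $\mathcal{N}(B,\Y,R)$ contains a matrix $\bar A$ supported only in row $i$ with $\bar a^i\cdot(y^i-f)=0$ and $\bar a^i\neq0$. If $(F_i\cap P)\setminus\Z^2\neq\emptyset$, Lemma~\ref{obs:dimension}(iv) exhibits $\gamma$ as a strict convex combination of $\gamma(B\pm\epsilon\bar A)\in\Delta$. Otherwise every ray intersection on $F_i$ equals $y^i$ (or there is none), so rotating $\aff(F_i)$ slightly about $y^i$ leaves $\gamma$ unchanged, keeps the triangle maximal lattice-free, and makes the endpoints of $F_i$ fractional; by Theorem~\ref{mlfcb2} and the Dey--Wolsey classification the perturbed triangle is Type~2, so $\gamma\in\Delta_2$, a contradiction. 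Hence $c\le1$ forces $\gamma$ to be a strict convex combination.

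Now suppose $c\ge2$ and relabel so that $F_3$ carries two corner rays, say to vertices $v_1,v_2$. Since $F_3=[v_1,v_2]$ spans a lattice line, let $M(S_3)$ be the slab between $\aff(F_3)$ and the next lattice line on the side of $M(B)$; this is the unique maximal lattice-free split with a facet containing $F_3$ and meeting $\intr M(B)$. Because $\gamma\notin\Xi_1$, neither Case~a nor Case~b holds, which forces $f\in M(S_3)$ and $P(B,R)\subseteq M(S_3)$. For each ray $r^j$ the segment $[f,p^j]$ then lies in the convex set $M(S_3)$, whence $\psi_{S_3}(r^j)\le\psi_B(r^j)$, i.e. $\gamma(S_3)\le\gamma(B)$ coordinatewise; as $M(S_3)$ is a maximal lattice-free split, $\gamma(S_3)\in\Pi$, and since $\gamma\notin\Pi$ we have $\gamma\neq\gamma(S_3)$, so $\gamma$ is dominated by $\gamma(S_3)\in\Delta$.

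It remains to bound $\#\Xi_1$. Every element arises from a Type~1 triangle whose facet $F_3$ has two corner rays $r^{j_1},r^{j_2}$ — $O(k^2)$ pairs. For a fixed pair, the interior integer point $y^3$ of $F_3$ lies strictly between $p^{j_1}$ and $p^{j_2}$, so by Lemma~\ref{rem:cone_vertex}(i) it is one of the polynomially many (Lemma~\ref{rem:ray_cone}) vertices of $(C(r^{j_1},r^{j_2}))_\IH$; this determines $\aff(F_3)$ (its primitive direction is pinned by requiring that its meetings with the two rays be the lattice neighbours of $y^3$), hence $v_1,v_2$, the parallel lattice lines, and $S_3$. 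In Case~a the coordinate of $f$ normal to $\aff(F_3)$ exceeds one lattice step, and an elementary estimate shows that the requirement that $M(B)$ be a lattice-free Type~1 triangle through $f$ then pins down its third vertex uniquely, so Case~a contributes polynomially many $\gamma$. In Case~b some ray intersection, say on $F_1$, lies beyond the lattice line through $y^1$; pairing that ray with the corner ray at the shared vertex $v_2$ produces a sub-segment of $F_1$ containing the two integer points $v_2$ and $y^1$, so Lemma~\ref{rem:cone_vertex}(ii) forces $\aff(F_1)$ to contain a facet of some $(C(\cdot,\cdot))_\IH$ — polynomially many possibilities — after which the third vertex is the intersection of $\aff(F_1)$ with the second lattice line parallel to $\aff(F_3)$, $\aff(F_2)=\aff(v_1,v_3)$, and the triangle, hence $\gamma$, is determined; the subcase where $v_3$ is itself a ray intersection is alternatively handled by Proposition~\ref{prop:unique-triangle}. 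In all cases the count is polynomial in the encoding sizes of $f,r^1,\dots,r^k$.

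The main obstacle is this last step, and within it Case~b: unlike Type~3 triangles, a Type~1 triangle with only two corner rays has a genuinely under-determined facet structure, so one must argue that the single ray witnessing $P\not\subseteq M(S_3)$, together with the rigidity of the Type~1 shape (the opposite vertex at lattice distance exactly two from $F_3$), propagates enough information to fix the remaining two facets. Making this propagation precise, together with the Case~a estimate pinning the third vertex, is where the technical work lies.
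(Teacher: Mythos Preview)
Your Steps for $c\le1$ and $c\ge2$ track the paper's proof closely: the paper also splits on $\#(\verts(B)\cap P)$, invokes Lemma~\ref{lemma:simple_tilts} (which is exactly your single-row tilt) when a facet has no corner ray and carries a non-integer ray intersection, replaces the triangle by a Type~2 one when that facet's ray intersections are all integer, and in the two-corner-ray case uses the split $M(S_3)$ to dominate.

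The counting argument, however, contains an error and is more laborious than necessary. You invoke Lemma~\ref{rem:cone_vertex}(i) to say that $y^3$ is a vertex of $(C(r^{j_1},r^{j_2}))_\IH$, but the hypothesis of that lemma requires $[p^{j_1},p^{j_2}]\cap\Z^2=\{y\}$, a single point. In a Type~1 triangle the corner ray intersections $p^{j_1},p^{j_2}$ are themselves the integral vertices of $F_3$, so $[p^{j_1},p^{j_2}]\cap\Z^2=\{p^{j_1},y^3,p^{j_2}\}$ has three points; in fact $y^3$ is the midpoint of a facet of the integer hull and is \emph{not} a vertex of it. Your enumeration of $y^3$ therefore breaks down.

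The paper sidesteps all of this by exploiting the integrality of Type~1 vertices directly: since $M(B)$ is lattice-free, the first lattice point along $f+\R_+ r^{j_i}$ is precisely the vertex $p^{j_i}$, so the ordered pair $(r^{j_1},r^{j_2})$ already determines $v_1,v_2$ and hence $F_3$, $y^3$, $\aff(F_3)$, and $S_3$ with no further choices. For Case~a, the triangles sharing $F_3$ partition the strip between lattice distance~1 and~2 from $\aff(F_3)$ into interior-disjoint half-size copies, so $f\notin M(S_3)$ lies in exactly one of them, fixing $v_3$ (this is your ``elementary estimate''). For Case~b, the paper's device is simpler than your integer-hull-facet argument: any ray $r^{j_3}$ with $p^{j_3}\notin M(S_3)$ crosses the line at lattice distance~1 inside the segment $[y^1,y^2]$; since $y^1,y^2$ are adjacent lattice points on that line, they are read off as the two lattice points straddling the crossing, giving $O(k)$ choices. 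In both cases $F_1=\aff(v_1,y^1)$ and $F_2=\aff(v_2,y^2)$ finish the triangle.
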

 Figure~\ref{fig:type1} illustrates these two cases.
\begin{proof}
Consider any $\gamma\in\Delta_1\setminus(\Xi_1\cup\Pi\cup\Delta_2)$ and let $\gamma = \gamma(B)$ for some $B \in \R^{3 \times 2}$ such that $M(B)$ is a Type 1 triangle. For the sake of brevity, we use $P$ to denote $P(B, R)$ in the remainder of this proof. \old{We first establish the following claim.
\begin{claim}\label{claim:Type1} Suppose $\gamma\in \Delta_1\setminus (\Pi \cup \Delta_2)$, there {\em do not} exist $\gamma^1, \gamma^2 \in \Delta$ such that $\gamma$ is a strict convex combination of $\gamma^1$ and $\gamma^2$, and there {\em does not} exist $\gamma' \in \Delta$ such that $\gamma$ is dominated by $\gamma'$. Then $\gamma \in \Xi_1$.
\end{claim}}

\begin{figure}[p]%
\centering
\ifpdf
\input{figureType1Cases.pdftex_t}
\else
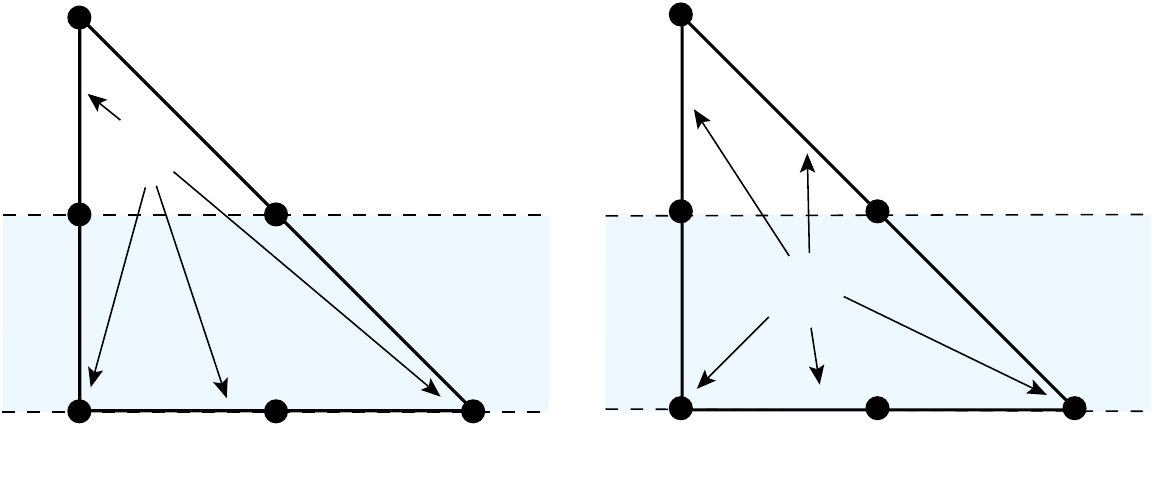
\fi
\caption{Proposition~\ref{prop:counting_type1}, Step 2: Uniquely determining
  Type 1 triangles that are in Cases a and b.  In both cases, the triangle is
  uniquely determined by $y^1, y^2$ and the ray intersections $p^{j_1},
  p^{j_2} \in \verts(B) \cap F_3$.}
\label{fig:type1}
\end{figure}%



\noindent \Step{1}\ Suppose $\#(\verts(B) \cap P) \leq 1$. 
This implies that some facet has no corner rays; without loss of generality, let this be $F_1$.  Thus $F_1\cap \verts(B) \cap P  = \emptyset$.
Let $y^1$ be the integer point in $\relint(F_1)$. 
If $P \cap F_1 \setminus \Z^2 = \emptyset$, then we can tilt $F_1$ slightly in either direction without making new ray intersections on $F_1$.  This creates a Type~2 triangle that realizes $\gamma$ (see
Figure~\ref{figure:simple2}), which contradicts the hypothesis that $\gamma \notin \Delta_2$.  Therefore, we can assume $P \cap \relint(F_1) \setminus\Z^2 \neq
\emptyset$.
Under this assumption, Lemma \ref{lemma:simple_tilts} shows that there exist $\gamma^1, \gamma^2 \in \Delta$ such that $\gamma$ is a strict convex combination of $\gamma^1$ and $\gamma^2$.
%

\smallbreak
\begin{figure}
\centering
\ifpdf
\input{figureType1toType2orTiltNew.pdftex_t}
\else
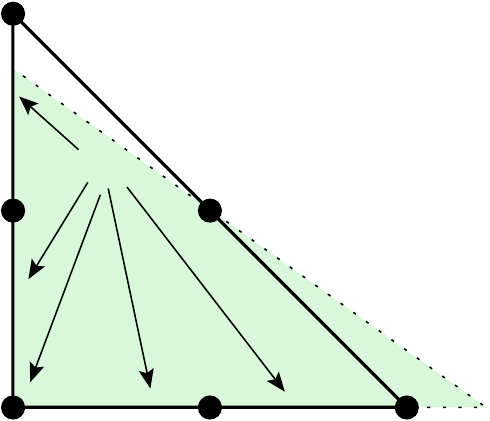
\fi
\caption{In the proof of Proposition~\ref{prop:counting_type1}, Step~1, a Type 1 triangle
  can be replaced by a Type 2 triangle (dotted) that gives the same inequality.%
}\label{figure:simple2}
\end{figure}
\noindent \Step{2}\ Suppose $\#(\verts(B) \cap P) \geq 2$. 
Thus there exist $p^{j_1}, p^{j_2} \in \verts(B) \cap P$, and we assume, by
possibly exchanging rows of~$B$, that $F_3$ is a facet 
containing $p^{j_1}, p^{j_2}$. Since $\gamma \not\in \Xi_1$, neither Case a
nor Case b holds. Thus $P \cup \{f\} \subset M(S_3)$. But then $\gamma$ is
dominated by or equal to~$\gamma(S_3)$. Since $\gamma \not\in \Pi$, $\gamma$ must be dominated by $\gamma(S_3)$. \smallskip

Thus we have shown that either $\gamma$ is a strict convex combination of $\gamma^1$ and $\gamma^2$ with $\gamma^1, \gamma^2 \in \Delta$ or there exists $\gamma' \in \Delta$ such that $\gamma$ is dominated by $\gamma'$.\smallbreak

\noindent \Step{3}\ 
We next bound the cardinality of $\Xi_3$, i.e., the number of Type 1 triangles
with two corner rays~$r^{j_1}$, $r^{j_2}$ on $F_3$ such that Case a or Case b
holds. There are $O(k^2)$ ways to choose the two corner rays on $F_3$, which
uniquely determine $F_3$.\smallbreak 

\noindent\textit{Case a.} Since $f$ does not lie in the split~$S_3$, the integer
points $y^1, y^2$ are uniquely determined.\smallbreak

\noindent\textit{Case b.} In this case $f$ lies in the split~$S_3$ and there exists a
ray intersection~$p^{j_3}$ outside the split.  After choosing the ray $r^{j_3}\in R$, 
the integer points $y^1, y^2$ are uniquely determined; there are at most~$k$
choices for~$r^{j_3}$.\smallbreak

Since the triangle is uniquely determined from the points $y^1, y^2$ and the
two corner rays $r^{j_1}$, $r^{j_2}$ on $F_3$, there are only polynomially
many Type 1 triangles which give vectors in~$\Xi_1$. 
\end{proof}

%


We next consider Type 2 triangles, which are the most complicated to handle.  For this, we first establish some notation and an intermediate lemma.

Consider a matrix $B=(b^1;b^2;b^3)\in \R^{3\times 2}$ such that $M(B)$ is a
Type 2 triangle.  For $i=1,2,3$, we denote $F_i = F_i(B)$. 
Without loss of generality, we assume that the facet containing multiple
integer points is $F_3$. 
We label the unique integer points in the relative interiors of $F_1$ and $F_2$ as $y^1$ and $y^2$, respectively. Within the case analysis of some of the proofs, we will refer to certain
points lying within splits.  For convenience, for $i=1,2,3$, we define $S_i\in \R^{3 \times 2}$ such that $M(S_i)$ is the maximal lattice-free split with the properties that one facet of $M(S_i)$ contains~$F_i$ and $M(S_i)\cap \intr(M(B)) \neq \emptyset$.

\begin{lemma}[Type 3 dominating Type 2 lemma]\label{lemma:Type3-domination}
Let $R=\{r^1, \ldots, r^k\}$. 
Consider any $B\in \R^{3\times 2}$ such that $M(B)$ is a Type 2 triangle. Denote the vertex $F_1\cap F_3$ by $v$ and let $y^3 \in F_3$ be the integer point in $\relint(F_3)$ closest to $v$. Suppose $P(B, R) \cap F_3$ is a subset of the line segment connecting $v$ and $y^3$. Then there exists a matrix $B'\in \R^{3\times 2}$ such that $M(B')$ is a Type 3 triangle and either $\gamma(B)$ is dominated by $\gamma(B')$, or $\gamma(B) = \gamma(B')$.
\end{lemma}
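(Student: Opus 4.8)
The plan is to obtain $B'$ from $B$ by rotating the facet $F_3$ (the one carrying at least two integral points) about the lattice point $y^3$, keeping the lines through $F_1$ and $F_2$ unchanged, and choosing the direction of rotation so that every integral point of $F_3$ other than $y^3$ — together with the vertex $w:=F_2\cap F_3$ — is pushed strictly outside. Let $v(F_3)$ be a primitive lattice vector spanning the line through $F_3$, oriented from $v$ towards $w$. Pick a nonzero $\bar a\in\R^2$ with $\bar a\cdot(y^3-f)=0$ (one linear condition, so $\bar a$ is unique up to sign and scaling) and normalize the sign so that $\bar a\cdot v(F_3)>0$. Set $\bar A=(0;0;\bar a)$ and $B'=B+\epsilon\bar A=(b^1;\,b^2;\,b^3+\epsilon\bar a)$ for a small $\epsilon>0$. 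Writing a point of the line through $F_3$ as $y^3+t\,v(F_3)$, one gets $(b^3+\epsilon\bar a)\cdot\bigl((y^3+t\,v(F_3))-f\bigr)=1+\epsilon t\,(\bar a\cdot v(F_3))$, which is $=1$ at $y^3$, $>1$ for $t>0$, and $<1$ for $t<0$. Since $y^3$ is the lattice point of $\relint F_3$ closest to $v$, every other lattice point of $\relint F_3$ has $t>0$ and is cut off, whereas the whole segment $[v,y^3]$ has $t\le 0$ and stays in the new halfspace.

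Assume first that $v$ is not a lattice point. Since $M(B)$ is a Type~2 triangle, it is a bounded maximal lattice-free set, so Lemma~\ref{lemma: S(B) full-dimensional} applied with $\bar A$ gives $\delta>0$ with $Y(B')\subseteq Y(B)$ for $0<\epsilon<\delta$; together with the previous paragraph (which keeps $y^1,y^2,y^3$ on the boundary and removes every other lattice point of $F_3$), $M(B')$ is, for all small $\epsilon$, a bounded lattice-free triangle with exactly one lattice point in the relative interior of each of $F_1(B'),F_2(B')$ (namely $y^1,y^2$, using that for small $\epsilon$ the moved endpoints have not crossed $y^1$ or $y^2$) and of $F_3(B')$ (namely $y^3$). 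Discarding the countably many $\epsilon$ for which the line through $F_3(B')$ carries a lattice point other than $y^3$, or for which a new vertex $F_1(B')\cap F_3(B')$ or $F_2(B')\cap F_3(B')$ is integral, $M(B')$ has exactly three boundary lattice points, one per edge, i.e. $M(B')$ is a Type~3 triangle. For domination I show $p(B,r^j)\in M(B')$ for every $j$, which gives $\psi_{B'}(r^j)\le\psi_B(r^j)$, i.e. $\gamma(B')_j\le\gamma(B)_j$ for all $j$, so $\gamma(B)$ is dominated by or equal to $\gamma(B')$. Since $p(B,r^j)\in M(B)$ and only the halfspace carrying $F_3$ changed, it suffices to check $(b^3+\epsilon\bar a)\cdot(p(B,r^j)-f)\le 1$. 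If $p(B,r^j)$ lies in $\relint F_1$, in $\relint F_2$, or at the apex $u:=F_1\cap F_2$, then $b^3\cdot(p(B,r^j)-f)<1$ strictly and the inequality holds for all small $\epsilon$; this covers all $j$ except those with $p(B,r^j)\in F_3$, for which the hypothesis forces $p(B,r^j)\in[v,y^3]$, where $(b^3+\epsilon\bar a)\cdot(p(B,r^j)-f)=1+\epsilon t\,(\bar a\cdot v(F_3))\le 1$ by the sign discussion. (A ray intersection at $w$ is impossible, as $w\in F_3\setminus[v,y^3]$.)

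The one case this does not settle — and where I expect the real difficulty — is when $v=F_1\cap F_3$ is itself a lattice point. Then $(b^3+\epsilon\bar a)\cdot(v-f)<1$, so $v$ remains in $M(B')$, and because the rotation moves the vertex $F_1(B')\cap F_3(B')$ to the far side of $v$ along the line through $F_1$, the point $v$ ends up in $\relint F_1(B')$; thus $M(B')$ carries two lattice points in the relative interior of one edge and is again of Type~2 (with long edge $F_1(B')$), not of Type~3. To repair this one would compose the rotation of $F_3$ with a rotation of $F_1$ about $y^1$ in the direction pushing $v$ out of the set — so that the two lattice points $v,y^1$ of $F_1$ are separated onto different edges — and argue (here is the delicate point, and where the hypothesis $P(B,R)\cap F_3\subseteq[v,y^3]$ must be exploited more fully) that this extra rotation does not displace any ray intersection lying on $F_1$, so that $\gamma(B')\le\gamma(B)$ survives. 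An alternative is to iterate the lemma on the Type~2 triangle $M(B')$, whose long edge $F_1(B')$ now has two \emph{fractional} endpoints so the degeneracy cannot recur, while tracking how the hypothesis transforms under the first rotation. Making either variant rigorous — in particular controlling the ray intersections near $v$ — is the main obstacle; everything else is a routine single-facet tilt in the spirit of Lemma~\ref{lemma:simple_tilts}.
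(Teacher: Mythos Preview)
Your approach is essentially the same as the paper's: tilt $F_3$ about $y^3$ via $\bar A=(0;0;\bar a^3)$ with $\bar a^3\cdot(y^3-f)=0$ and $\bar a^3\cdot(y^3-v)>0$ (equivalently your $\bar a\cdot v(F_3)>0$), invoke Lemma~\ref{lemma: S(B) full-dimensional} to control new lattice points, and check directly that every $y\in F_3\cap\Z^2$ stays on or outside the tilted facet while every ray intersection on $[v,y^3]$ stays on or inside. The domination argument and the conclusion that $M(B+\epsilon\bar A)$ is a Type~3 triangle are handled the same way.

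Your concern about $v\in\Z^2$ is legitimate, but the paper does not treat this case either: its sentence ``for any $y\in F_3\cap\Z^2$, $y=y^3+\beta(y^3-v)$ for some $\beta\ge 0$'' fails for $y=v$ when $v$ is integral, and in that situation $v$ indeed lands in $\relint(F_1(B'))$, making $M(B')$ Type~2 rather than Type~3. The point is that this case never arises where the lemma is applied. The lemma is used only in Steps~1b\oldstylenums{2} and~2c of Proposition~\ref{prop:type2-abstractly}, and in both places the surrounding hypotheses force $v\notin\Z^2$: in Step~1b\oldstylenums{2} one has $y^5:=y^3-v(F_3)\notin F_3$, while an integral $v$ would satisfy $v=y^3-v(F_3)=y^5\in F_3$; in Step~2c one has $y^3\notin F_1\cap F_3$, whereas an integral $v$ would itself be the closest lattice point of $F_3$ to $v$ and hence $y^3=v\in F_1\cap F_3$ by the convention set at the start of Step~2. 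So the ``main obstacle'' you flag is a phantom: you may simply add the hypothesis $v\notin\Z^2$ to the lemma (harmless for the paper), or observe that even when $v\in\Z^2$ the same tilt still produces a lattice-free triangle $M(B')$ with $\gamma(B')\le\gamma(B)$, just of Type~2 rather than Type~3.
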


\begin{figure}
\centering
\ifpdf
\input{fig11-Lemma3-10.pdftex_t}
\else
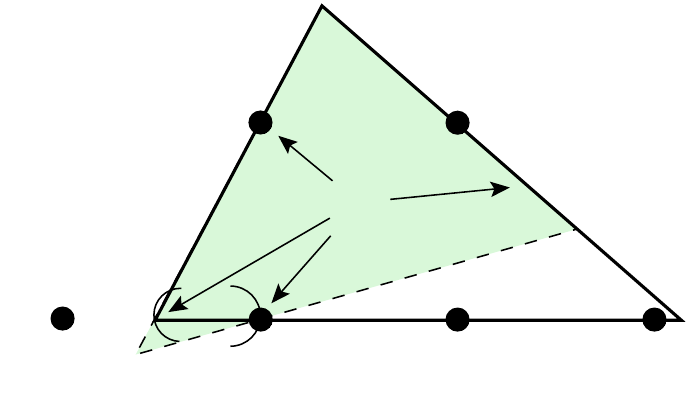
\fi
\caption{ We depict the geometry of Lemma \ref{lemma:Type3-domination} and show how we can change a facet of $M(B)$ to find a new matrix $B' \in \R^{3\times 2}$ such that $M(B')$ is a Type 3 triangle and $\gamma(B')$ either dominates $\gamma(B)$, or $\gamma(B) = \gamma(B')$.}
\label{fig-Lemma310}
\end{figure}

\begin{proof} 
  Choose $\bar a^3$ such that $\bar a^3 \cdot (y^3-f)
= 0$ and $\bar a^3 \cdot (y^3 - v) > 0$. Consider tilting $F_3$ by adding
$\epsilon \bar A  = \epsilon(0; 0; \bar a^3)$ to $B$ for some small enough
$\epsilon > 0$ so that the following two conditions are met. First,
$\epsilon$ is chosen small enough such that the set of integer points
contained in $M(B + \epsilon \bar A)$ is a subset of $Y(B)$; this can be done
by Lemma~\ref{lemma: S(B) full-dimensional}. Second, since $P(B,R) \cap F_3
\subset [y^3, v]$, we know that there is no corner ray pointing to $F_2 \cap
F_3$, and therefore we can choose $\epsilon$ small enough such that for all rays $r^j$ such that $2 \in I_B(r^j)$, $I_{B +\epsilon \bar A}(r^j) = I_B(r^j)$. This means that $p^j = p(B,r^j)\in F_2$ if and only if $p(B+\epsilon \bar A, r^j) \in F_2(B + \epsilon \bar A)$.
 
Now suppose $r^j$ is a ray pointing  from $f$ to $F_3$.  
Since $P(B,R) \cap F_3 \subset [y^3, v]$, we can describe~$r^j$ as the linear
 combination $r^j = 
 \alpha_1(y^3 - f) - \alpha_2(y^3 - v)$  for some $\alpha_1, \alpha_2 \geq
 0$.   Observe that $\psi_{B + \epsilon \bar A}(r^j) = \max\{b^1\cdot r^j,
 (b^3+\epsilon\bar a^3)\cdot r^j\}$
 and 
 \begin{equation}\label{eq:ineq}
   (b^3+\epsilon\bar a^3)\cdot r^j = b^3 \cdot r^j + \epsilon \bar a^3 \cdot
   \bigl(\alpha_1(y^3 - f) - \alpha_2(y^3 - v)\bigr) \leq b^3 \cdot r^j  = \psi_B(r^j).
 \end{equation}
 By definition of~$\psi_B(r^j)$, we also have $b^1\cdot r^j \leq
 \psi_B(r^j)$. 
 Therefore, $\psi_{B + \epsilon \bar A}(r^j) \leq \psi_B(r^j)$.
 
 Finally suppose  $r^j$ is such that $p^j \in P(B,R) \cap ((F_1 \cup F_2) \setminus F_3)$.
Then $\psi_{B + \epsilon \bar A}(r^j) = \psi_B(r^j)$ since by construction $I_B(r^j) = I_{B+\epsilon\bar A}(r^j)$ for all such rays. 
 Also, note that for any $y \in F_3\cap \Z^2$, $y = y^3 + \beta (y^3 - v)$ for some $\beta \geq 0$.  
 Therefore, 
 $$(b^3 + \epsilon \bar a^3) \cdot (y - f) \geq b^3 \cdot (y-f) = 1,$$
  meaning that none of these integer points are contained in the interior of
  $M(B + \epsilon \bar A)$. Since the set of integer points contained in $M(B
  + \epsilon \bar A)$ is a subset of $Y(B)$ and facets $F_1$ and $F_2$ were
  not tilted, $M(B + \epsilon \bar A)$ is lattice-free; in fact, it is a Type
  3 triangle. See Figure \ref{fig-Lemma310}. 

Thus, we can choose $B' = B +\epsilon\bar A$. The vector $\gamma(B)$ is dominated by $\gamma(B')$ when the inequality (\ref{eq:ineq}) is strict for some $r^j$; otherwise, $\gamma(B) = \gamma(B')$.
\end{proof}

\begin{proposition}[Counting Type 2 triangles]\label{prop:type2-abstractly}
There exists a finite subset $\Xi_2\subseteq\Delta_2$ such that if $\gamma\in\Delta_2\setminus(\Xi_2\cup\Delta_3\cup \Pi)$, then
  there exist $\gamma^1, \gamma^2 \in 
  \Delta$ such that 
  $\gamma$ is a strict convex combination of $\gamma^1$ and $\gamma^2$
  or there exists $\gamma' \in 
  \Delta$ such that $\gamma$ is dominated by $\gamma'$. Moreover, the cardinality of $\Xi_2$ is bounded
  polynomially in the binary encoding sizes of $f, r^1, \dots,
  r^k$. Specifically, $\Xi_2$ can be chosen as the set of all $\gamma(B)$
  such that $M(B)$ is a Type 2 triangle
  satisfying one of the following, where
  $P = P(B, R)$ and $F_i = F_i(B)$ such that $F_3$ is the facet of~$M(B)$
  containing multiple integer points:\\
\textbf{Case a.} $P \subset \Z^2$. \\
\textbf{Case b.} $P \not\subset \Z^2$ and there exist $ p^{j_1}\in P \cap F_1\cap
F_3$ (i.e., there is a corner ray pointing from~$f$ to~$F_1\cap
F_3$) and $ p^{j_2} \in P \cap F_3$ with $\#([p^{j_1}, p^{j_2}] \cap
\Z^2) \geq 2$. Moreover, if $P \cap \relint(F_2)\setminus\Z^2 \neq \emptyset$, then
there is a corner ray of $M(B)$ pointing to a vertex different from
$F_1\cap F_3$. Also, one
of the following holds:\\  
\indent \textbf{Case b\oldstylenums{1}.} $f \notin M(S_3)$.\\
\indent \textbf{Case b\oldstylenums{2}.} $f \in M(S_3)$ and $P \not\subset M(S_3)$.\\
\textbf{Case c.}  $P \not\subset \Z^2$ and there exist $p^{j_1}\in P\cap F_1\cap
F_3 \cap \Z^2$ (i.e., there is a corner ray pointing from~$f$ to~$F_1\cap
F_3\subset\Z^2$) and $p^{j_2} \in P\cap F_1$ with $\#([p^{j_1},p^{j_2}] \cap \Z^2) \geq
2$. Moreover, if $P\cap \relint(F_2) \setminus \Z^2 \neq \emptyset$, then
$p^{j_2}$ can be chosen such that $p^{j_2} \in F_1 \cap F_2$ (i.e., there is a corner ray pointing from~$f$ to~$F_1\cap F_2$).  
Also, one of the following holds:\\
\indent \textbf{Case c\oldstylenums{1}.} $f \notin M(S_1)$.\\
\indent\textbf{Case c\oldstylenums{2}.} $f \in M(S_1)$ and $P \not\subset M(S_1)$.  \\
\textbf{Case d.}  $P \not\subset \Z^2$, for all $i \in \{1,2,3\} $ and all $p^{j_1}, p^{j_2} \in P\cap F_i $  we have $\#([p^{j_1},p^{j_2}] \cap \Z^2 )\leq 1$, there
exists a corner ray pointing from~$f$ to $F_1\cap F_3$, and $F_1\cap F_3
\not\subset \Z^2$. Let $y^3, y^4 \in F_3$ such that $y^3$ is the closest integer point in $\relint(F_3)$ to $F_1 \cap F_3$, and $y^4$ is the next closest integer point.  Let $H_{2,4}$ be the half-space adjacent to $[y^2, y^4]$ and containing $y^1$. 

Then, we further have $P\cap (y^3, y^4) \neq \emptyset$. Moreover, one of the following holds:

\indent \textbf{Case d\oldstylenums{1}.} $f \notin H_{2,4}$,  there exists a
corner ray pointing from $f$ to $F_1 \cap F_2$.\\
\indent \textbf{Case d\oldstylenums{2}.} $f \notin H_{2,4}$,  there exists a ray pointing from $f$ through $(y^1, y^2)$ to $F_1$ and there are no rays pointing from $f$ to $\relint(F_2)\setminus \Z^2$.\\
\indent \textbf{Case d\oldstylenums{3}.} $f \in H_{2,4}$,  $P \not\subset H_{2,4}$, and there exists a corner ray pointing from $f$  to $F_1 \cap F_2$.

\end{proposition}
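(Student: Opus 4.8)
The plan is to mirror the structure of the proofs of Propositions~\ref{prop:counting_type3}, \ref{prop:counting_splits}, and \ref{prop:counting_type1}: first show that any $\gamma = \gamma(B) \in \Delta_2\setminus(\Xi_2\cup\Delta_3\cup\Pi)$ is either dominated by or a strict convex combination of points in $\Delta$, and then, separately, count the elements of $\Xi_2$ case by case. I would fix $B = (b^1;b^2;b^3)$ with $M(B)$ a Type~2 triangle, $F_3$ the facet with multiple integer points, and $y^1 \in \relint(F_1)$, $y^2\in\relint(F_2)$ the unique integer points there, and take the covering $\Y = (Y_1, Y_2, Y_3)$ with $Y_1 = \{y^1\}$, $Y_2 = \{y^2\}$, $Y_3 = Y(B)\cap F_3$. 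The first reduction: if $P\subset\Z^2$ we are in Case~a (handled in the counting step by Observation~\ref{obs:int_intersections}), so assume $P\not\subset\Z^2$. If some ray intersection lies in $\relint(F_1)\setminus\Z^2$ or $\relint(F_2)\setminus\Z^2$ while the corresponding facet $F_1$ (resp. $F_2$) has no corner ray and $Y_i = \{y^i\}$ is a single integer point with $(F_i\cap P)\setminus\Z^2\neq\emptyset$, then Lemma~\ref{lemma:simple_tilts} (single facet tilt) immediately produces the strict convex combination. So we may assume every such "free" non-integral ray intersection has been ruled out; this forces ray intersections on $F_1, F_2$ to either be integral, be corner rays (vertices), or be confined near $F_1\cap F_3$.

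The heart of the argument is then a finite case split on \emph{where} the non-integral ray intersections sit and \emph{how many corner rays} there are, which is exactly what Cases~b, c, d encode. In each case I would do one of two things: (1) exhibit a split or Type~3 triangle $M(S_3)$, $M(S_1)$, or (in Case~d) the half-space $H_{2,4}$ that contains $P\cup\{f\}$, so that $\gamma$ is dominated by (or equals, hence is in $\Pi$ or $\Delta_3$, contradicting the hypothesis) the corresponding $\gamma(S_i)$ — this is the analogue of Step~2 of Proposition~\ref{prop:counting_type1}; or (2) when $P\cap F_3\subset[v, y^3]$ with $v = F_1\cap F_3$, invoke Lemma~\ref{lemma:Type3-domination} directly to get domination by a Type~3 triangle. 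Case~d is the genuinely delicate one: here all segments $[p^{j_1},p^{j_2}]$ on each facet contain at most one integer point, there is a corner ray to $F_1\cap F_3$ with $F_1\cap F_3\not\subset\Z^2$, and one must first argue $P\cap(y^3,y^4)\neq\emptyset$ (otherwise $P\cap F_3\subset[v,y^3]$ and Lemma~\ref{lemma:Type3-domination} applies), and then split on whether $f\in H_{2,4}$ and on the presence of corner rays to $F_1\cap F_2$ or rays through $(y^1,y^2)$; the subcases d\oldstylenums{1}, d\oldstylenums{2}, d\oldstylenums{3} are designed so that in each remaining configuration one can build a tilting matrix $\bar A\in\Ny\setminus\{0\}$ (typically tilting $F_2$ and/or $F_3$ while pinning $y^1, y^2, y^3, y^4$ via \eqref{eq:tilting-1} and pinning the corner rays via \eqref{eq:tilting-2}) with some $\bar a^i\neq 0$ such that condition~(iv) of Lemma~\ref{obs:dimension} applies, yielding the strict convex combination. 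The dimension count — enough equations-versus-variables slack in $\Ny$ to guarantee a nonzero $\bar A$ with the needed nonvanishing row, while the equations~\eqref{eq:tilting-2} for the corner rays do not overdetermine the system — is essentially the same bookkeeping as in Lemma~\ref{lemma:corner_rays_almost}.

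For the counting step, I would go case by case exactly as in Propositions~\ref{prop:counting_splits} and~\ref{prop:counting_type1}. Case~a contributes one $\gamma$ by Observation~\ref{obs:int_intersections}. In Case~b, choosing the corner ray $r^{j_1}$ to $F_1\cap F_3$ and the ray $r^{j_2}$ on $F_3$ (with $\#([p^{j_1},p^{j_2}]\cap\Z^2)\geq 2$) forces $\aff(F_3)$ to contain a facet of $(C(r^{j_1},r^{j_2}))_\IH$ by Lemma~\ref{rem:cone_vertex}~(ii) — polynomially many choices by Lemma~\ref{rem:ray_cone} — and then the corner ray fixes one vertex of $M(B)$; the extra corner ray required when $P\cap\relint(F_2)\setminus\Z^2\neq\emptyset$ gives $O(k)$ more choices, and the integer points $y^1, y^2$ are then pinned by the split $S_3$ in subcases b\oldstylenums{1}/b\oldstylenums{2} exactly as in Proposition~\ref{prop:counting_type1}. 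Case~c is symmetric with $F_1$ in the role of $F_3$ and $S_1$ in the role of $S_3$. Case~d: $F_1\cap F_3$ is fixed by its corner ray; $y^3, y^4$ are the two closest integer points on $F_3$ to that vertex, hence determined; the ray $r^{j}$ with $p^j\in(y^3,y^4)$ then pins $\aff(F_3)$ via Lemma~\ref{rem:cone_vertex}~(i) applied with the corner ray and $r^j$, giving polynomially many possibilities; and in d\oldstylenums{1}/d\oldstylenums{2}/d\oldstylenums{3} the remaining data ($F_1\cap F_2$ corner ray, or the ray through $(y^1,y^2)$, together with $y^1$, $y^2$ being determined by $H_{2,4}$ or by that ray) pins the triangle, with only $O(k)$ further choices. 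Since a Type~2 triangle is determined by its three facet lines, and each of these is pinned by $O(\mathrm{poly})$ combinatorial choices (triples of rays from $R$, plus vertices of integer hulls of cones $C(r^{j_1},r^{j_2})$, which are polynomially bounded by Lemma~\ref{rem:ray_cone}), the total is polynomial in the encoding size of $f, r^1,\dots,r^k$. I expect the main obstacle to be Case~d, specifically verifying that in each of d\oldstylenums{1}--d\oldstylenums{3} the tilting matrix one writes down genuinely lies in $\Ny\setminus\{0\}$ with the right nonzero row (so that Lemma~\ref{obs:dimension}(iv) fires) — the geometry of which vertex the corner rays pin and which facets can be freely tilted without creating new ray intersections has to be tracked carefully, and the "moreover" clauses about $\relint(F_2)$ are precisely there to keep this dimension count working.
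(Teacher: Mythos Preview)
Your plan captures the right tools (single-facet tilts via Lemma~\ref{lemma:simple_tilts}, domination by splits $S_1,S_3$, Lemma~\ref{lemma:Type3-domination}) and the right overall shape, but it misses one maneuver that the paper uses repeatedly and that is essential: replacing $M(B)$ by a \emph{different} lattice-free triangle $M(B')$ with $\gamma(B')=\gamma(B)$ before any tilt or domination can be applied. The most important instance is this: Case~b requires a corner ray pointing to $F_1\cap F_3$, but the hypothesis ``there exist $p^{j_1},p^{j_2}\in P\cap F_3$ with $\#([p^{j_1},p^{j_2}]\cap\Z^2)\ge 2$'' does not by itself supply such a corner ray. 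The paper's Step~3a handles this by first showing (via a simultaneous tilt of $F_1$ and $F_2$, with a virtual corner ray inserted at $F_1\cap F_2$) that if any non-integral ray intersection lies on $F_1\cup F_2\setminus F_3$ then $\gamma$ is already a strict convex combination; otherwise all rays point to $F_3\cup\{y^1,y^2\}$, and one then \emph{tilts $F_1$ so that $p^{j_1}$ becomes a vertex}, producing a new Type~2 triangle $M(B')$ that does have a corner ray on $F_3$ and hence lands in Case~b. Your plan has no analogue of this construction and would get stuck here. Similar new-triangle constructions are needed in Step~1b\oldstylenums{3} (shrink $F_3$ to $F_3\cap[y^5,y^4]$ so that the new $F'_3$ has only one interior integer point and Lemma~\ref{lemma:simple_tilts} can be applied to it) and Step~2b (rebase the triangle on $[y^2,y^4]$ and reduce to Step~1a). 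Note also that your use of Lemma~\ref{lemma:simple_tilts} on $F_1$ or $F_2$ individually fails when there is a corner ray at $F_1\cap F_2$; the paper's Step~1a circumvents this by tilting $F_1$ and $F_2$ together subject to a corner-ray constraint there, which is not the single-facet lemma.

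Your counting for Case~d is also circular as written: the corner ray $r^{j_1}$ to $F_1\cap F_3$ fixes only a direction from $f$, not a point, so ``$y^3,y^4$ are determined'' presupposes $\aff(F_3)$, which you are trying to pin. The paper instead chooses $y^1$ and $y^3$ independently as vertices of the integer hulls $(C(r^{j_1},r^{j_2}))_\IH$ and $(C(r^{j_1},r^{j_3}))_\IH$ via Lemma~\ref{rem:cone_vertex}(i); the direction of $[y^1,y^3]$ then determines the direction of $H_{2,4}$, and in d\oldstylenums{1}/d\oldstylenums{2} the condition $f\notin H_{2,4}$ singles out a unique split through $y^1,y^3$ containing $f$, which fixes $y^2,y^4$. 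Only then is $\aff(F_3)$ determined (by $[y^3,y^4]$), and $p^{j_1}$ along with it.
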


\begin{proof}
 
Consider any $\gamma\in \Delta_2$. By definition of $\Delta_2$, there exists a matrix $B\in \R^{3\times 2}$ such that $\gamma(B) = \gamma$ and $M(B)$ is a Type 2 triangle. Recall the labeling of the facets of $M(B)$ as $F_1, F_2, F_3$ with corresponding labels for the rows of $B$. For the sake of brevity, let $P$ denote the set $P(B, R)$ of the ray intersections in $M(B)$ for the rest of this proof. If $P \subset \Z^2$, we are in Case a. Therefore, in the remainder of the proof, we always assume $P \not\subset \Z^2$.

\subsubsection*{Proof steps 1 and 2: Dominated, convex combination, or Case d.}
Suppose $P \not\subset \Z^2$ and for all $i\in\{1,2,3\}$ and all $p^{j_1}, p^{j_2} \in
P \cap F_i$, we have $\#([p^{j_1},p^{j_2}] \cap \Z^2 )\leq 1$.  We will show  that at least one of the following occurs:
\begin{enumerate}[(i)]
 \item $\gamma(B)$ is dominated by some $\gamma' \in \Delta$, or is     a strict convex combination of some $\gamma^1, \gamma^2 \in \Delta$,  or there exists  a maximal lattice-free split or Type 3 triangle  $M(B')$ such that $\gamma(B') =~\gamma(B)$. \item Either Case d\oldstylenums{1}, Case d\oldstylenums{2}, or Case d\oldstylenums{3} occurs.
\end{enumerate}
   
First note that if there exist distinct $p^{j_1}, p^{j_2} \in P \cap \verts(B) \cap F_3$, then $[p^{j_1}, p^{j_2}] = F_3$ and $\#([p^{j_1}, p^{j_2}] \cap \Z^2) \geq 2$ since $F_3$ contains multiple integer poitns, and thus violating the assumptions.  Therefore $\#(P \cap \verts(B) \cap F_3) \leq 1$.


Recall that $F_3$ is the facet of $M(B)$ that contains at least 2 integer points and consider the sub-lattice of~$\Z^2$ contained in the linear space parallel to $F_3$. We use the notation $v(F_3)$ to denote the primitive lattice vector which generates this one-dimensional lattice and lies in the same direction as the vector pointing from $F_1 \cap F_3$ to $F_2 \cap F_3$. Since $\#([p^{j_1},p^{j_2}] \cap \Z^2 )\leq 1$ for all $p^{j_1}, p^{j_2} \in P \cap F_3$, there exists $y^3 \in F_3\cap\Z^2$ such that $P \cap F_3
\subset (y^3 - v(F_3), y^3 + v(F_3))$. Let $y^4 = y^3 + v(F_3)$ and let $y^5 = y^3 - v(F_3)$ and so $P \cap F_3$ is a subset of the {\em open} segment $(y^5, y^4)$. Note that $y^4, y^5$ are not necessarily contained in $F_3$. In Step~1 we will analyze the case with no corner rays on $F_3$ and see that we always arrive in conclusion~(i), whereas in Step \ref{step:Step2} we will analyze the case with a corner ray on $F_3$ and see that we will also arrive in conclusion~(i), except for the last step, Step \ref{step:Step2d},  where we arrive in conclusion~(ii).   
\smallbreak

\begin{figure}
\centering
\ifpdf
\input{fig2-Steps1a1bRevision.pdftex_t}
\else
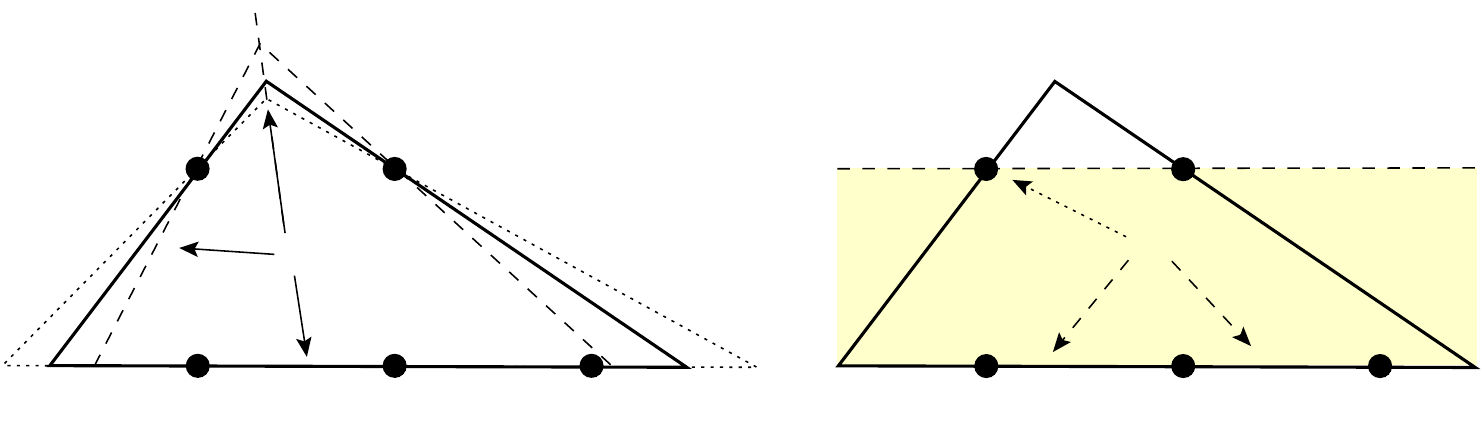
\fi
\caption{ Steps \ref{step:Step1a} and \ref{step:Step1b1}. The left figure
  depicts Step 1a where $P\cap (F_1 \cup F_2) \neq \emptyset$ and there are no
  corner rays on $F_3$, and shows that $\gamma(B)$ is a strict convex
  combination of other points in $\Delta$ by finding two lattice-free
  triangles through tilting the facets $F_1$ and $F_2$.     The right figure
  depicts Step~\ref{step:Step1b1}, where we find the split $M(S_3)$ such that
  $\gamma(S_3)$ dominates $\gamma(B)$.} 
\label{fig:1aIandII}
\end{figure}

\begin{figure}
\centering
\ifpdf
\input{fig3-Step1b2.pdftex_t}
\else
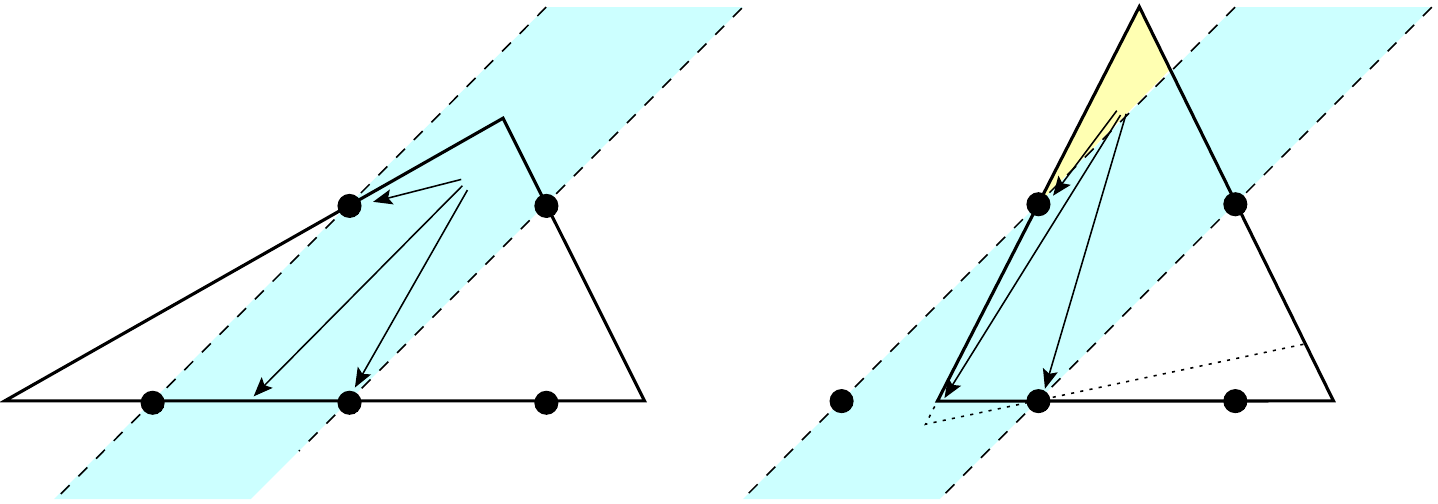
\fi
\caption{Step \ref{step:Step1b2}. In this step we consider $f \notin M(S_3)$.  On the left we see that $P\cap F_3 \subset [y^5,y^3]$ and both $y^5,y^3 \in F_3$, which allows $\gamma(B)$ to be dominated by $\gamma(S)$.  
This split satisfies $\gamma(S) \in \Delta$ because $f\notin M(S_3)$, meaning that $f$ is located somewhere on the top of the triangle, which is completely contained by $M(S)$.  
On the right, $y^5 \notin F_3$, which means that the split $S$ cuts off the top corner of the triangle, potentially leaving $f$ outside the split.  
This is problematic, so instead, we use Lemma \ref{lemma:Type3-domination} to create a new Type 3 triangle $M(B')$ such that $\gamma(B')$ dominates $\gamma(B)$.}
\label{fig:1atiltsIII}
\end{figure}

\noindent \Step{1}\label{step:Step1}\ Suppose  that $F_3$ has no corner rays, i.e.,
$\verts(B) \cap P \cap F_3 = \emptyset$.  \smallbreak


\indent  \Step{1a}\label{step:Step1a}  
Suppose  $P \cap (F_1 \cup F_2)\setminus \Z^2 \neq \emptyset$, i.e., there exists an index $j \in \{1,\dots , k\}$ such that $p^j \in (F_1 \cup F_2) \setminus (F_3 \cup \Z^2)$.  We will use the tilting space to show that $\gamma(B)$ is a strict convex combination of points in $\Delta$.  
Let $ Y_1 = \{y^1\}, Y_2 = \{y^2\}, Y_3 = F_3 \cap\Z^2$, $\Y = \{Y_1, Y_2,
Y_3\}$.  Let $r'$ be a corner ray that points from $f$ to the vertex $F_1 \cap
F_2$ and let $R' = \{r^1, \dots, r^k, r'\}$.  Since $R' \supseteq R$, we have $\mathcal N(B, \Y, R' )
\subseteq \mathcal N(B, \Y, R)$. 

We first count the equations that define $\mathcal N(B,\Y, R')$.  The equation
$a^3= b^3$ is implicit in $\mathcal T(B,\mathcal Y, R')$ since there are multiple integer points on $F_3$. There are two other equations for integer points on $F_1$ and $F_2$. The null space $\mathcal N(B, \Y, R')$ is given by the equations  

$$ 
a^1 \cdot (y^1-f) = 0, \qquad \ a^2 \cdot (y^2-f) = 0, \qquad\ a^1\cdot r' = a^2\cdot  r',\qquad a^3 = 0.
$$
Since $\mathcal{N}(B,\Y, R') \subset \R^{3\times2}$ and there are 5 equations
(note that $a_3 = 0$ is actually two equations), we see $\dim(\mathcal{N}(B,\Y,R))
\geq \dim(\mathcal{N}(B,\Y,R')) \geq 1$.
Let $\bar A = (\bar a^1; \bar a^2; \bar a^3) \in \mathcal N(B,\Y,R)\setminus
\{0\}$. Since $\Y$ is a covering of the lattice points in $M(B)$, by
Lemma~\ref{obs:dimension}, there exists $\epsilon > 0$ such that
$\gamma(B)= \frac{1}{2}\gamma({B + \epsilon \bar A}) + \frac{1}{2} \gamma({B -
  \epsilon \bar A})$ and $M(B \pm \epsilon \bar A)$ are both lattice-free. See
Figure \ref{fig:1aIandII} for these possible triangles.  

We next show that $\gamma(B - \epsilon\bar A) \neq \gamma(B +\epsilon\bar A)$. Observe that $\bar a^3 = 0$ since we are restricted by the equation $a^3 = b^3$. If $\bar a^1 = 0$, then $\bar a^2$ must satisfy $\bar a^2\cdot  r' = 0$ and $\bar a^2\cdot  (y^2-f) =0$, which implies that $\bar a^2 = 0$ since $r'$ and $y^2-f$ are linearly independent (since $y^2 \in \relint(F_2)$ and $r'$ points to a corner of $F_2$).  
Similarly, if $\bar a^2 = 0$, then $\bar a^1 = 0$. Since $\bar A \neq 0$, we must have both $\bar a^1, \bar a^2 \neq 0$.  
Then, since $\# Y_1 = \# Y_2 = 1$ and $1,2 \in I_B(r')$ and $p^j \in (F_1 \cup F_2) \setminus (F_3 \cup \Z^2)$, by Lemma~\ref{obs:dimension} $\gamma(B)$ is a strict convex combination of $\gamma(B + \epsilon \bar A)$ and $\gamma(B - \epsilon \bar A)$ and $M(B \pm \epsilon \bar A)$ are lattice-free triangles.

\smallbreak

\indent  \Step{1b}\label{step:Step1b}
 Suppose  $P \cap (F_1 \cup F_2)\setminus \Z^2 = \emptyset$, i.e., there only
 exist rays pointing  from $f$ to $F_3, y^1, y^2$. Therefore, $P \subset
 M(S_3)$.  We now analyze further subcases.

\indent  \Step{1b\oldstylenums{1}}\label{step:Step1b1}  
Suppose $f \in M(S_3)$.  Then $\gamma(B)$ is either dominated by or equal to $\gamma({S_3})$.  If $P \cap F_3 = \emptyset$, then $P\subset \{y^1, y^2\} \subset \Z^2$, which is a contradiction with the assumption of Step 1 that $P\not\subset\Z^2 $.
See Figure~\ref{fig:1aIandII}.

\indent  \Step{1b\oldstylenums{2}}\label{step:Step1b2} 
   Suppose that $f \notin M(S_3)$ and $P \cap F_3 \neq \emptyset$.
Suppose further that either $P\cap F_3 \subset [y^3,y^4]$ or $P\cap F_3
\subset [y^5, y^3]$.  These two cases are very similar, so we will just show
the case with $P \subset [y^5, y^3]$, which is illustrated in Figure~\ref{fig:1atiltsIII}.

If both $y^5, y^3 \in F_3$, then $\gamma(B)$ is dominated by $\gamma(S)$ where
$S$ is the maximal lattice-free split with its two facets along $[y^3, y^2]$
and $[y^5, y^1]$. This is because $P \not\subset \Z^2$ and so there exists a
ray intersection lying in the open segment $(y^5, y^3)$.  

Otherwise,  suppose $y^5 \notin F_3$. Note that $y^3 \notin \verts(B)$, because otherwise since $P\cap F_3 \neq \emptyset$, we find that  $P \cap F_3 \subset [y^5, y^3]\cap F_3 = \{y^3\}$, and therefore, $y^3 \in P$, contradicting the fact that there are no corner rays on $F_3$. Thus $y^3$ is the integer point in $\relint(F_3)$ closest to $F_1 \cap F_3$. This implies that $M(B)$ satisfies the hypotheses of Lemma~\ref{lemma:Type3-domination}. Hence there exists $B'$ such that $M(B')$ is a Type 3 triangle and either $\gamma(B)$ is dominated by $\gamma(B')$ or $\gamma(B) = \gamma(B')$.

\begin{figure}
\centering
\ifpdf
\input{fig4-Step1b3Revision.pdftex_t}
\else
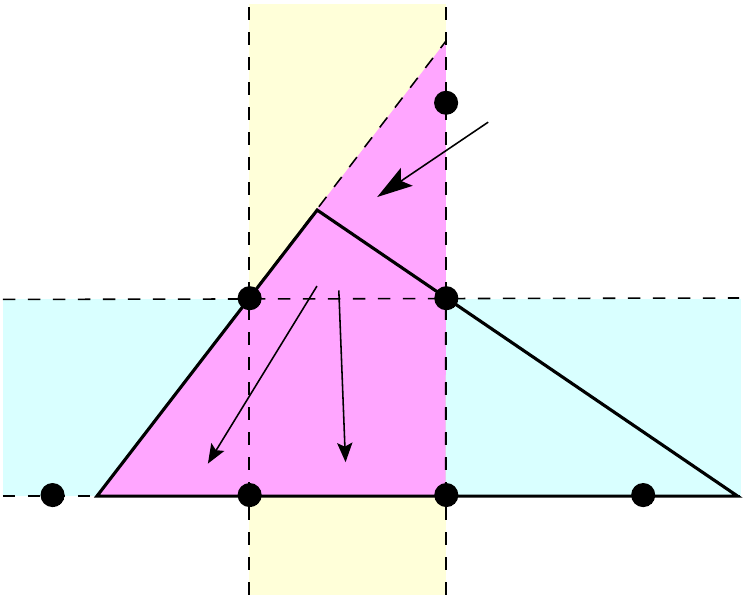
\fi
\caption{Step 1b\oldstylenums{3}. This figure demonstrates a new triangle $M(B')$ that yields the same inequality as $M(B)$.  
 }
\label{fig:figure4}
\end{figure}

\indent  \Step{1b\oldstylenums{3}}\label{step:Step1b3} 
  Suppose that $f \notin M(S_3)$, $P \cap F_3 \neq \emptyset$, and  $P \not\subset [y^3, y^4]$, $P \not\subset [y^5, y^3]$, i.e., $\conv(P \cap F_3)$ contains the integer point $y^3$ in its relative interior. 

We define a new triangle $M(B')$ by choosing its facets $F'_i = F_i(B')$, and thus, uniquely defining the matrix $B' \in \R^{3 \times 2}$.
 Let $F'_3 = F_3 \cap [y^5, y^4]$.  Next, let $F'_1$ and $F'_2$ be given by lines from the endpoints of $F'_3$ through $y^1$ and $y^2$, respectively. See Figure~\ref{fig:figure4}.
We will show $\gamma(B') = \gamma(B)$ and then that $\gamma(B')$ is a strict convex combination of other points in $\Delta$. 

\begin{claim}$M(B')$ is lattice-free.
\end{claim}
\begin{claimproof}  First note that $\#(F_3 \cap [y^5, y^4] \cap \Z^2) \geq 2 $ since $y^3$ is in the relative interior of $F_3$ and $F_3$ contains multiple integer points.  Without loss of generality, suppose $ y^4 \in F_3$.  Let $S\in \R^{3 \times 2}$ such that $M(S)$ is the maximal lattice-free split with facets on $[y^1, y^3]$ and $[y^2, y^4]$.  Then $M(B')$ is lattice-free since $M(B') \subset M(S) \cup M(B)$, which are both lattice-free sets.
\end{claimproof}
\begin{claim} $f\in M(B')$ and $\gamma(B') = \gamma(B)$.
\end{claim}
\begin{claimproof}  Since $f \in M(B) \cap M(S)$ and $M(B) \cap M(S) \subset
  M(B') \cap M(S)$, it follows that $f \in M(B')$. Recall that we are under
  the assumption that all rays point to $y^1, y^2$ or $F_3$. Moreover, all ray
  intersections on $F_3$ are contained in $(y^5, y^4)$ and hence the ray
  intersections are contained in $F'_3$. Therefore, the set $P(B', R)$ of ray intersections with respect to $M(B')$ is the same as $P$, and therefore $\gamma(B) = \gamma(B')$.
 \end{claimproof}
 Since $\conv(P \cap F_3)$ contains $y^3$ in its relative interior and $P\cap F_3$ is contained in the open segment $(y^5, y^4)$, we must have $P \cap \relint(F_3) \setminus \Z^2 \neq \emptyset$.  Furthermore, let $P' = P(B',R)$ be the set of ray intersections for $M(B')$, then $P' \cap F_3' = P \cap F_3$ by definition of $F'_3$.  Therefore, $P' \cap \relint(F_3') \setminus \Z^2 \neq \emptyset$.  Furthermore, $P' \cap \verts(B') \cap F_3' = \emptyset$ since $M(B)$ has no corner rays pointing to $F_3$ and there cannot exist rays pointing to $y^4$ or $y^5$ since $P\cap F_3$ is contained in the open segment $(y^5, y^4)$. Moreover, $\relint(F'_3) \cap \Z^2 = \{y^3\}$. Therefore, Lemma \ref{lemma:simple_tilts} can be applied to $M(B')$ with $F = F'_3$, which shows that $\gamma(B') = \gamma(B)$ is a strict convex combination of other points in $\Delta$.

 \medbreak

\noindent  \Step{2}\label{step:Step2} 
Suppose there is a corner ray on $F_3$ and, if necessary, relabel
the facets of $M(B)$ (and the rows of $B$) such that this corner ray points from~$f$ to the intersection $F_1 \cap
F_3$. Recall that we label the integer points $y^1\in  F_1$,  $y^2 \in
F_2$. Since $F_1\cap F_3\subseteq P$, observe that $y^3\in F_3$ (as defined in the paragraph before Step~1) is the closest integer point in $F_3$ to $F_1
\cap F_3$, and since $M(B)$ is a Type 2 triangle, we have $y^4\in F_3$.  Let $H_{2,4}$ be the half-space with
boundary containing the segment  $[y^2, y^4]$ and with interior
containing~$y^1$.  See Figure  \ref{fig:1ctiltsIII}. 
\smallbreak

\begin{figure}
\centering
\ifpdf
\input{fig5-Step2ab.pdftex_t}
\else
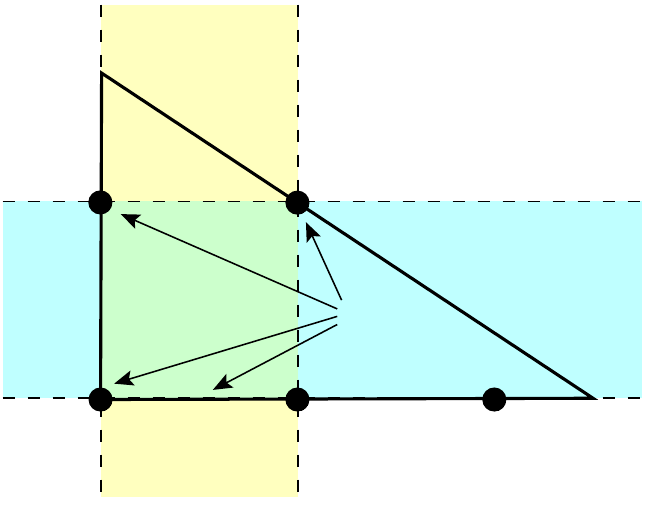
\fi
\caption{Step 2a. 
Depending on where $f$ is located in the triangle, at least  one of $\gamma(S_3)$ or $\gamma(S_1)$ either dominates or realizes $\gamma(B)$.  In this picture, $\gamma(S_3)$ realizes $\gamma(B)$, while $\gamma(S_1)\notin\Delta$ since $f \notin\intr(M(S_1))$.
  }
\label{fig:Type2Step1}
\end{figure}

\begin{figure}
\centering
\ifpdf
\input{fig6-Steps2cd.pdftex_t}
\else
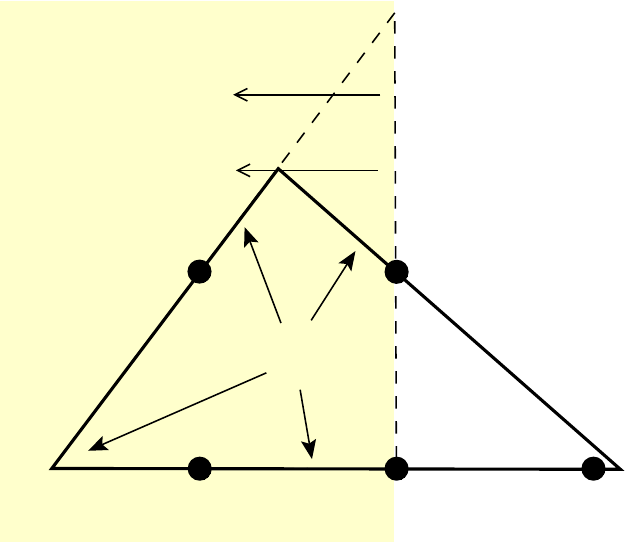
\fi
\caption{ Step 2b.  We show that if $P \cup \{f\} \subset H_{2,4}$, then we can create a different Type~2 triangle~$M(B')$ such that $\gamma(B')$  dominates~$\gamma(B)$.  If $\gamma(B) = \gamma(B')$, i.e., the ray pointing from $f$ to the facet~$F_2$ does not exist in the above picture, then the new triangle is a Type 2 triangle that was considered in Step 1a.   
}
\label{fig:1ctiltsIII}
\end{figure}

\smallbreak

\indent  \Step{2a}\label{step:Step2a} 
Suppose $y^3 \in F_1 \cap F_3$ and recall that there is a corner ray pointing
from $f$  to $F_1\cap F_3$. Note that this implies that $P \cap F_2 \cap
\verts(B) = \emptyset$, because $\#([p^{j_1}, p^{j_2}] \cap \Z^2) \leq 1$ for all
$p^{j_1}, p^{j_2} \in P \cap F_i$ for any $i \in \{1,2,3\}$ and including any corner
ray pointing from $f$ to $F_1 \cap F_2$ or $F_2 \cap F_3$ would contradict
this.  Therefore,  $P \cap F_2 \subset \relint(F_2)$. 

If $P \cap F_2 \setminus \Z^2  =P \cap \relint(F_2)\setminus \Z^2\neq \emptyset$, then $M(B)$ satisfies the hypotheses of Lemma~\ref{lemma:simple_tilts} with $F = F_2$ and $\gamma(B)$ is a strict convex combination of points in $\Delta$.

 If instead $P \cap F_2
\setminus \Z^2 = \emptyset$, then $P \cap F_2 \subset \{y^2\}$, and since $F_1 \cap F_3 \subseteq P$ and no two ray intersections within a facet can contain two integer points between them,  we must have $P \cap F_1 \subseteq [y^3, y^1]$ and $P \cap F_3 \subseteq [y^3, y^4]$.  Therefore, $P\subset \conv(\{y^1, y^2, y^3, y^4\})$. 
  Since $M(S_1) \cup M(S_3) \supset M(B)$, we must have 
$P\cup \{f\} \subset M(S_i)$ for $i=1$ or~$3$, and hence $\gamma(B)$ is either dominated by or equal to $\gamma(S_i)$.
See Figure \ref{fig:Type2Step1}.
\smallbreak

\indent  \Step{2b}\label{step:Step2b} 
Suppose $y^3 \notin F_1 \cap F_3$ and $P\cup \{f\} \subset H_{2,4}$.  Let $B' \in \R^{3 \times 2}$ such that $M(B')$ is the lattice-free Type 2 triangle with base $F'_3$ along $[y^2, y^4]$, the facet $F'_1$ given by the line defining $F_1$ for $M(B)$ and the facet $F'_2$ given by the line defining $F_3$ for $M(B)$. 
Let $P'$ be the set of ray intersections for $M(B')$.  See Figure \ref{fig:1ctiltsIII}.

If $P\cap \relint(F_2)\setminus \{y^2\} \neq \emptyset$, then $\gamma(B')$ dominates $\gamma(B)$ because $P\cup \{f\} \subset H_{2,4}$. 

Otherwise, $\gamma(B) = \gamma(B')$ and $P\cap \relint(F_2)\setminus \{y^2\} = \emptyset$. This implies that no ray points from $f$ to the corner $F'_1 \cap F'_3$ of $M(B')$. Recall that $P \cap F_3$ is a subset of the open segment $(y_5, y_4)$, therefore, $y^4 \not\in P$.
Hence, $M(B')$ has no corner rays on $F'_3$.  Also, since there exists a corner ray pointing  from $f$ to $F_1 \cap F_3 = F'_1 \cap F'_2$, we see that $P'\cap (F_1' \cup F_2') \setminus \Z^2 \neq \emptyset$.  Hence, $M(B')$ is a Type 2 triangle satisfying the conditions considered in Step~\ref{step:Step1a}, and using the same reasoning from that step, $\gamma(B') = \gamma(B)$ can be shown to be a strict convex combination of points in $\Delta$.
\smallbreak

\indent  \Step{2c}\label{step:Step2c} 
  Suppose $P \cap F_3 \subset [F_1 \cap F_3, y^3]$, $y^3 \notin F_1 \cap F_3$.
  Recall that $y^3$ is the closest integer point in $F_3$ to the corner $F_1\cap F_3$. Then $M(B)$ satisfies the hypotheses of Lemma~\ref{lemma:Type3-domination} and we can find a Type 3 triangle $M(B')$ such that $\gamma(B)$ is dominated by $\gamma(B')$ or $\gamma(B) = \gamma(B')$.

\begin{figure}[top]
\centering
\ifpdf
\input{fig7-Steps2e12.pdftex_t}
\else
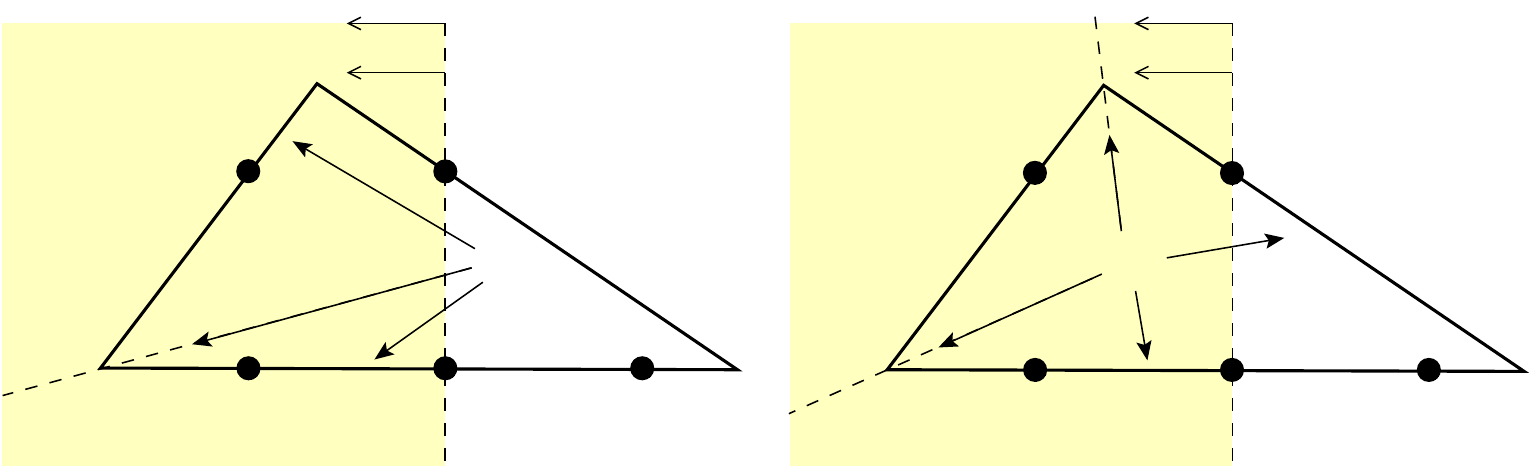
\fi
\caption{ Steps \ref{step:Step2d1}  and \ref{step:Step2d2}.  These steps arrive at  Cases d\oldstylenums{1}, d\oldstylenums{2},  and d\oldstylenums{3}, which are depicted here.  Other rays may also exist.  Cases d\oldstylenums{1} and d\oldstylenums{2}, because of their commonalities, are represented in one picture on the left, and Case d\oldstylenums{3} is on the right.}
\label{fig:Exceptions}
\end{figure}
\smallbreak

\indent  \Step{2d}\label{step:Step2d} 
         We can now assume that $P \not\subset \Z^2$ (the assumption for Steps 1 and 2), there is a corner ray pointing from $f$ to  $F_1 \cap F_3$ (assumption in beginning of Step 2), $y^3 \notin F_1 \cap F_3$ (negation of the assumption in Step 2a), $P \cup \{f\} \not\subset H_{2,4}$ (negation of the second assumption in Step 2b), and $(y^3, y^4) \cap P \neq \emptyset$ (negation of the assumption in Step 2c), which implies $y^3 \in \intr(\conv(P \cap F_3))$. Furthermore, we may be in one of the following subcases.
         
\indent  \Step{2d\oldstylenums{1}}\label{step:Step2d1} 
$f\notin H_{2,4}$.  This implies $f \in M(S_3)$ since $M(B)\setminus H_{2,4} \subset M(S_3)$. If $P$ is also contained in $M(S_3)$, then either $\gamma(B)$ is dominated by $\gamma({S_3})$, or $\gamma(B) = \gamma(S_3)$. Therefore, we assume $P \not\subset M(S_3)$, and so there must exist a ray $r$ pointing  from $f$ through $(y^1, y^2)$.  

Suppose there is a ray that points from $f$ to $\relint(F_2)$. If there is no
corner ray pointing from $f$ to $F_1 \cap F_2$, then $M(B)$ would satisfy the
hypotheses of Lemma~\ref{lemma:simple_tilts} with $F = F_2$ since no ray
points to $F_2\cap F_3$. Therefore, $\gamma(B)$ can be expressed as the strict
convex combination of points from $\Delta$. On the other hand, if there is a corner ray pointing to $F_1\cap F_2$, then we satisfy the statement of Case~d\oldstylenums{1}.  See Figure \ref{fig:Exceptions}.

Suppose now that no ray points from $f$ to $\relint(F_2)$. This implies that
the ray $r$ points from $f$ to $F_1$ through $(y^1, y^2)$ and $P \cap \relint(F_2)\setminus\Z^2 = \emptyset$. This is Case d\oldstylenums{2}.  

\indent  \Step{2d\oldstylenums{2}}\label{step:Step2d2} 
$f \in H_{2,4}$ and $P \not\subset H_{2,4}$. Because also $P\cap F_3 \subseteq
H_{2,4}$, this implies that $P \cap \relint(F_2) \setminus \Z^2 \neq \emptyset$. 

If there is no corner ray pointing from $f$ to $F_1\cap F_2$, then $M(B)$ satisfies the hypotheses of Lemma~\ref{lemma:simple_tilts} with $F = F_2$\ because there is no ray intersection in $F_2 \cap F_3$. Then $\gamma(B)$ can be expressed as the strict convex combination of points from $\Delta$. 

On the other hand, if there is a corner ray pointing from $f$ to $F_1\cap F_2$, then we satisfy the statement of Case d\oldstylenums{3}.

\bigskip

From the analysis of Steps 1 and 2, we conclude that
if $P \not\subset \Z^2$,  $\gamma$ is not dominated by any $\gamma' \in \Delta$, is not a strict convex combination of any $\gamma^1, \gamma^2 \in \Delta$, and there does not exist a maximal lattice-free split or Type 3 triangle $M(B')$ such that $\gamma(B') = \gamma$, then one of the following holds:
\begin{enumerate}
\item[(i)] There exist ray intersections $p^{j_1}, p^{j_2} \in P \cap F_i$ with $\#([p^{j_1},p^{j_2}] \cap \Z^2) \geq 2$ for some $i \in \{1,2,3\}$.
\item[(ii)] We are in Case d.
\end{enumerate}

\subsubsection*{Proof steps 3 and 4: Remaining cases.}
We now assume that $\gamma = \gamma(B)$ is not dominated by any $\gamma' \in \Delta$, is not a strict convex combination of any $\gamma^1, \gamma^2 \in \Delta$,  and  there does not exist a maximal lattice-free split or Type~3 triangle $M(B')$ such that $\gamma(B') = \gamma(B)$, and we are not in Case~d, and we are not in Case~a (so $P \not\subset \Z^2$). 
Therefore, from our previous analysis, there exist ray intersections $p^{j_1}, p^{j_2} \in P \cap F_i$ with $\#([p^{j_1},p^{j_2}] \cap \Z^2) \geq 2$ for some $i \in \{1,2,3\}$. We will show that either Case b\oldstylenums{1}, b\oldstylenums{2}, c\oldstylenums{1}, or c\oldstylenums{2} occurs. In Step 3 below, we analyze the case when $i=3$, and in Step~4, we analyze the case when $i=1$ or $i=2$. \medbreak

\noindent  \Step{3}\label{step:Step3} 
Suppose $P \not\subset \Z^2$ and there exist  $p^{j_1}, p^{j_2} \in P \cap
F_3$ with $\#([p^{j_1},p^{j_2}] \cap \Z^2) \geq 2$. Let $p^{j_1},p^{j_2}$ be such that $P \cap F_3 \subseteq [p^{j_1},p^{j_2}]$.\smallbreak

\begin{figure}
\centering
\ifpdf
\input{fig8-Step3aRevision.pdftex_t}
\else
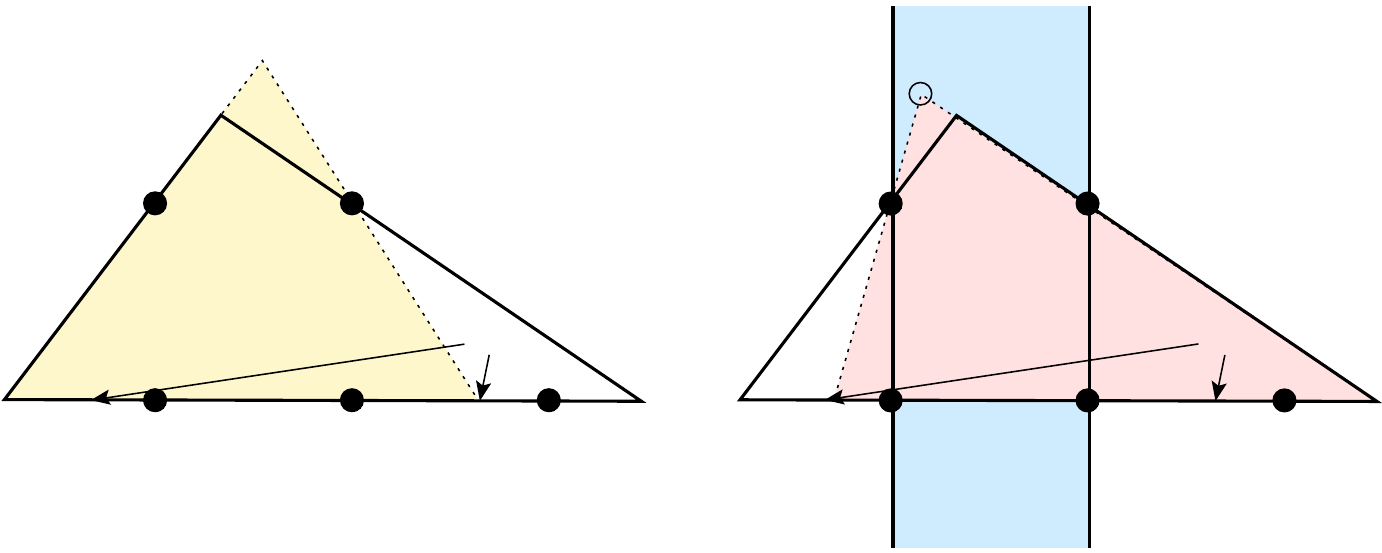
\fi
\caption{Step~3a.  Either
  $F_1$ or $F_2$ is tilted to give a new triangle~$M(B')$ (dotted).   
  (a)~Here $F_2$ cannot be used because tilting would remove $f$ from the
  interior.
  (b)~Instead, $F_1$ needs to be used.}
\label{fig:type2-step2a}
\end{figure}
\indent \Step{3a}\label{step:Step3a} 
We first show that there exists a matrix $B'$ such that $M(B')$ is a lattice-free Type 2 triangle that has a corner ray on $F_3$, and $\gamma(B) = \gamma(B')$.\smallbreak

If either $p^{j_1}$ or $p^{j_2}$ is a vertex of $M(B)$, then we let $B' = B$ and move to Step \ref{step:Step3b}. 

We now deal with the case that $p^{j_1}, p^{j_2} \notin \verts(B)$, i.e., there are no corner rays pointing from $f$ to $F_3$.

\begin{claim}
\label{claim:gamma}
If there exists a ray $r^j \in R$ such that $p^j \in F_1 \cup F_2 \setminus (F_3 \cup \Z^2)$, then $\gamma(B)$ is a strict convex combination of other points in $\Delta$.
\end{claim}
\begin{claimproof}
We define $\Y = (Y_1,
Y_2, Y_3)$ as $Y_1 = \{y^1\}$, $Y_2 = \{y^2\}$ and $Y_3 = F_3 \cap
\Z^2$. Hence, $\Y$ is a covering of $Y(B)$.  Define a new ray $r'$ to be a corner ray pointing from $f$ to the intersection $F_1 \cap F_2$ and let $R' = \{r^1, \dots, r^k, r'\}$.  Then $\mathcal{N}(B,\Y, R') \subseteq \Ny$.  

Since there are no corner rays pointing to $F_3$, there is only one independent equation
coming from a corner ray condition in the system
defining~$\mathcal{N}(B,\Y, R')$. The integer point sets $Y_1$ and $Y_2$ each contribute one
equation. Since $Y_3$ contains two integer points, it contributes a system of
equalities involving $a^3$ with rank $2$. Therefore, $\dim \mathcal{N}(B,\Y, R') =
6 - 5 = 1$.  

Note that the equations from $Y_3$ impose that $\bar a^3 = 0$.   Therefore, either $\bar a^1 \neq 0$ or $\bar a^2 \neq 0$.  Since $p^j \in \relint(F_i) \setminus \Z^2$ for either $i=1,2$, we have that $i \in I_B(r^j)$ for either $i=1,2$.  Then, since $\#Y_1 = \#Y_2 = 1$, by Lemma~\ref{obs:dimension}, there exists $\epsilon > 0$ such that $\gamma(B)$ is a strict convex combination of $\gamma({B + \epsilon \bar A})$ and $\gamma(B - \epsilon \bar A)$ and $M(B + \epsilon \bar A)$ and $M(B - \epsilon \bar A)$ are lattice-free.
\end{claimproof}



We now show that either $F_1$ or $F_2$ can be tilted to create a new Type 2
triangle $M(B')$ that has a corner ray pointing to the facet that contains
multiple integer points, $F_3(B')$, and $\gamma(B') = \gamma(B)$.  
Let $y^1, y^2$ be the integer points in $\relint(F_1)$, $\relint(F_2)$, respectively.
Since, by Claim~\ref{claim:gamma}, there are no rays pointing to $F_1 \cup F_2
\setminus \Z^2$, tilting $F_i$ with fulcrum~$y^i$ does not change $\gamma(B)$
unless $f$ is no longer on the interior of the perturbed set.      
For $i=1, 2$, consider changing $F_i$ to
now lie on the line through $p^{j_i}$ and $y^i$. At most one of these facet tilts puts $f$ outside the perturbed set, and therefore, at least one of these changes is possible.  This is illustrated in
Figure~\ref{fig:type2-step2a}. We can assume that the
tilt of facet~$F_1$ is possible  (by exchanging the rows of $B$, if necessary).  Let $B' \in \R^{3 \times 2}$ such that $M(B')$  is the new set after this tilting operation. The facets $F_i' = F_i(B')$ of $M(B')$ are chosen such that $F_1'$ corresponds to the new tilted $F_1$ and $F_2', F_3'$ correspond to $F_2, F_3$ respectively. 

We claim that $M(B')$ is lattice-free.  To
see this, we will show that $\intr(M(B'))$ is a subset of the union of
interiors of two lattice-free sets.  Let $y^3, y^4\in [p^{j_1}, p^{j_2}] \cap \Z^2$ be distinct integer points
adjacent to each other.  Then consider the maximal lattice-free split $M(S)$,
where $S \in \R^{2\times 2}$,  whose facets contain the segments $[y^3, y^1]$ and
$[y^4, y^2]$, respectively.  
Since $[y^3, y^4] \subset (p^{j_1}, F_2 \cap F_3)$ is a strict
subset, the former and new intersections $F_1 \cap F_2$ and $F_1' \cap F_2'$
are contained in the split $M(S)$.
Observe that $\intr(M(B')) \cap M(S_3) \subseteq \intr(M(B))$ and 
$\intr(M(B')) \setminus M(S_3) \subseteq \intr(M(S))$. 
Hence $\intr(M(B')) \subset \intr(M(B)) \cup \intr(M(S))$.  
Therefore $M(B')$ is lattice-free since $M(S)$ and $M(B)$ are both lattice-free. \smallbreak 

Therefore, we have shown that $\gamma(B) = \gamma(B')$ where $M(B')$ is a
Type~2 triangle that has a corner ray pointing to the facet $F'_3$.  Note that
since $\gamma(B) = \gamma(B')$, the sets of ray intersections coincide, that
is, $P(B', R) = P$.

\indent  \Step{3b}\label{step:Step3b} 
After Step \ref{step:Step3a}, we now focus on the new triangle $M(B')$ with facets $F_i' = F_i(B')$ for $i=1, 2,3$ that has a corner ray pointing from $f$ to the vertex at $F'_1 \cap F'_3$.   
We will show that the conditions of Case b are satisfied. 

If $P(B', R)\cap \relint(F'_2) \setminus \Z^2 \neq \emptyset$ and there are no corner rays on $F'_2$, then 
$M(B')$ satisfies the hypotheses of
Lemma~\ref{lemma:simple_tilts} and $\gamma(B) = \gamma(B')$ could be expressed
as a strict convex combination of points in $\Delta$.  Therefore, if $P(B', R)\cap \relint(F'_2) \setminus \Z^2 \neq \emptyset$, then there must be a corner ray pointing from $f$ to $F'_2$ (and thus pointing to a vertex different from $F'_1\cap F'_3$).


Hence, the conditions of Case b are
met for $B'$ (instead of~$B$). Furthermore, if $P\cup \{f\}\subset M(S_3)$, then $\gamma(B) =
\gamma(B')$ is dominated by or equal to $\gamma(S_3)$, a contradiction to our
assumption;  hence, either Case
b\oldstylenums{1} or Case b\oldstylenums{2} occurs. 
\medskip

Thus, from the analysis of Step 3, when there exist  $p^{j_1},
p^{j_2} \in P \cap F_3$ with $\#([p^{j_1},p^{j_2}] \cap \Z^2) \geq 2$, then $M(B')$ is a Type 2 triangle satisfying the statement of Case~b.  \medbreak

\noindent  \Step{4}\label{step:Step4} 
Suppose $P \not\subset \Z^2$ and there exist  $p^{j_1},
p^{j_2} \in P \cap F_i$ with $\#([p^{j_1},p^{j_2}] \cap \Z^2) \geq 2$, for $i=1$ or $i=2$.
After a relabeling of the facets of $M(B)$ and the rows of $B$, we can assume $i = 1$. Since $M(B)$ is a Type 2 triangle and $F_1$ is a facet with at most one integer point in its relative interior, we must have $\#(F_1 \cap \Z^2) \leq 2$.  In order for $\#([p^{j_1},p^{j_2}]\cap \Z^2) \geq 2$, it has to equal exactly two, and one of the points, say
$p^{j_1}$, must lie in $F_1 \cap F_3 \cap \Z^2$.  Thus, $p^{j_1}$ corresponds to a corner ray.

If $ P\cap\relint(F_2) \setminus \Z^2 \neq \emptyset$, then again, there must be a corner ray on~$F_2$; otherwise, Lemma \ref{lemma:simple_tilts} shows
that $\gamma(B)$ is a strict convex combination of points in $\Delta$. We can assume that this corner ray points from $f$ to $F_1 \cap F_2$, otherwise we are back to the assumptions in Step 3 and $M(B)$ will satisfy the conditions of Case b. Thus $p^{j_2}$ can be chosen such that $p^{j_2} \in F_1\cap F_2$.

As in Case b, if $P\cup \{f\} \subset M(S_1)$, then $\gamma(B)$ is dominated
by or equal to $\gamma(S_1)$. Hence, we are either in Case c\oldstylenums{1}
or Case c\oldstylenums{2}. \smallskip

\subsubsection*{Proof step 5: Bounding the cardinality of \boldmath $\Xi_2$ by a polynomial.}

Recall that we have a set of $k$ rays $\{r^1, \ldots, r^k\}$ and $P$ is the set of ray intersections. Given this set of rays, we count how many distinct vectors $\gamma(B)$ can arise when $M(B)$ satisfies the conditions in Cases a, b, c and d.  We will apply Lemma \ref{rem:ray_cone} to show that there are only polynomially many possibilities for $\gamma(B)$ in each case. 
\medskip

\noindent\textit{Case a.} $P(B, R) \subset \Z^2$.
Observation~\ref{obs:int_intersections} shows that 
there is a unique $\gamma\in \Xi_2$ corresponding to this case.\old{We need to count the vectors~$\gamma(B)$ with
corresponding $M(B)$ such that $P \subset \Z^2$. Consider the set~$Q = \{ q^1, \dots, q^k\}$ of
$k$ closest integer points that the rays $\{r^1, \ldots, r^k\}$ point to from $f$ such that $q^j = f + \lambda_j r^j$ and $\lambda_j = \min \{ \lambda\geq 0  \st f + \lambda r^j \in \Z^2\}$.  Recall that this definition is similar to the set $P$, that is, $p^j = f + \frac{1}{\psi_B(r^j)} r^j$ where $\mu_j = \frac{1}{\psi_B(r^j)}$ is exactly the scaling factor such that $p^j = f + \mu_j r^j \in \partial M(B)$.
If $\conv(Q)$ is a lattice-free set and it is contained in one or more Type~2
triangles, then we choose any such Type~2 triangle as $M(B)$ and we will have $P =
Q$ because $Q \subset \partial M(B)$. Moreover, all of these triangles yield the same vector $\gamma(B)$. If
$\conv(Q)$ is not lattice-free or is not contained in a Type 2 triangle, then
there does not exist a Type~2 triangle whose set of ray intersections
is~$P$. Therefore there is at most  one possibility for~$\gamma(B)$
which arises from Case~a.}

\begin{figure}
\centering
\ifpdf
\input{fig9-CasebCounting.pdftex_t}
\else
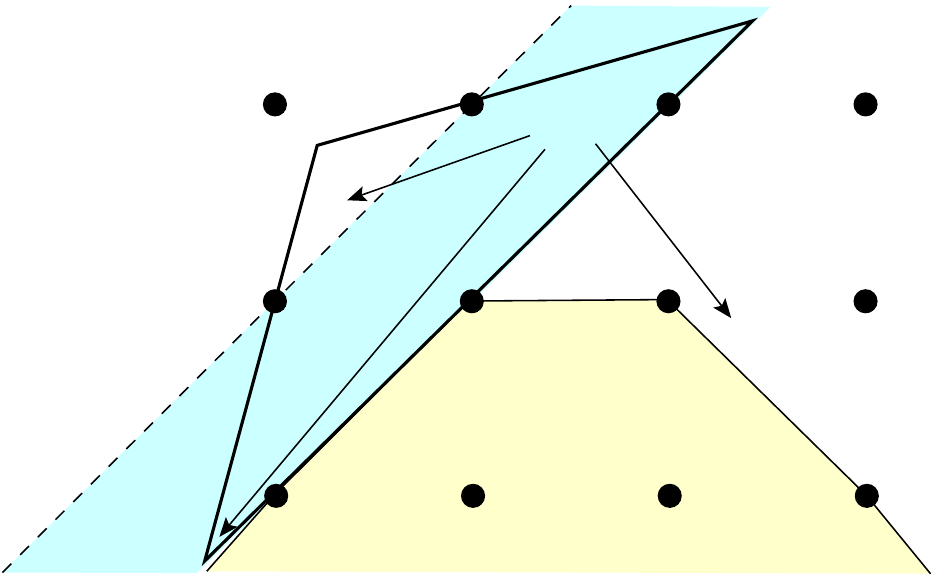
\fi
\caption{Proposition \ref{prop:type2-abstractly}, Step 5: Uniquely determining Type 2 triangles that are in Case~b using a facet of $(C(r^{j_1}, r^{j_2})_\IH)$ and the integer points $y^1$ and $y^2$.}
\label{fig:Type2ChoicesNew}
\end{figure}
\medbreak

\noindent\textit{Case b.} We now count the vectors $\gamma(B)$ such that
$M(B)$ satisfies the conditions of Case b with respect to our set of rays
$\{r^1, \ldots, r^k\}$. Consider any such $M(B)$. From the conditions stated
in Case b, we can assume that $M(B)$ has a corner ray on $F_3$. We label as
$r^{j_1}, r^{j_2}$ the two rays whose corresponding ray intersections are on $F_3$,
so that $r^{j_1}$ points to $F_1\cap F_3$ and the ray intersection of~$r^{j_2}$ is
closest to $F_2 \cap F_3$; and so  $r^{j_1}$ is a corner ray by the statement of
Case~b. There are $2\times {k \choose 2}$ ways to choose $r^{j_1}, r^{j_2}$ from the
set $\{r^1, \ldots, r^k\}$ with one of them as the corner ray.  See
Figure~\ref{fig:Type2ChoicesNew}. By Lemma~\ref{rem:cone_vertex}, $\aff(F_3)$ contains a facet of $(C(r^{j_1},r^{j_2}))_\IH$. By
Lemma~\ref{rem:ray_cone}, we have polynomially many choices for
$\aff(F_3)$. Once we choose $\aff(F_3)$, we consider the possible
choices for $y^1, y^2$, which are the integer points on $F_1, F_2$,
respectively.  

In Case b\oldstylenums{1}, where $f \not\in M(S_3)$, $y^1, y^2$ are given uniquely by where $f$ is. To see this, we observe a few things. Let $y^3$ and $y^4$ be the integer points on $F_3$ that are closest to $F_1\cap F_3$. The split with one side going through $y^1, y^3$ and the other side going through $y^2, y^4$ contains~$f$. Now consider the family of maximal lattice-free splits with one side going through $y^3$ and the other side going through $y^4$. Observe that since $f \not\in M(S_3)$, only one member of this family of splits contains~$f$. This uniquely determines $y^1$ and $y^2$.

In Case b\oldstylenums{2}, $P\not\subset M(S_3)$, which implies that there exists a ray $r^{j_3}$ such that $r^{j_3}$ points between $y^1$~and~$y^2$. Moreover, since $y^1, y^2$ have to lie on the lattice plane adjacent to $F_3$, we have a unique choice for $y^1, y^2$ once we choose $r^{j_3}$ from our set of $k$ rays. Now $r^{j_3}$ can be chosen in $O(k)$ ways and so there are $O(k)$ ways to pick $y^1, y^2$.

We already know there is a corner ray pointing to $F_1\cap F_3$. By the
statement of Case~b, either $P \cap \relint(F_2) \setminus \Z^2 \neq \emptyset$, in which case we have a corner ray of $M(B)$ pointing to
a different vertex, or $P \cap \relint(F_2) \setminus \Z^2 =\emptyset$.
If $M(B)$ has a corner ray pointing to a vertex different from $F_1\cap F_3$,
then we can choose it in $O(k)$ ways, and the triangle is uniquely determined
by these two corner rays, $\aff(F_3)$, $y^1$, and $y^2$. 

On the other hand, if $M(B)$ has corner rays pointing only to $F_1\cap F_3$
(one of which is~$r^{j_1}$), then the facet $F_2$ has no non-integer ray
intersections in its relative interior. Therefore, any possible choice of this
facet such that no ray points to $\relint(F_2)\setminus \Z^2$ will give a
triangle that yields the same vector~$\gamma(B)$.

Hence, there are only polynomially many possibilities for Case b.\smallbreak

\begin{figure}
\centering
\ifpdf
\input{fig10-CasecCounting.pdftex_t}
\else
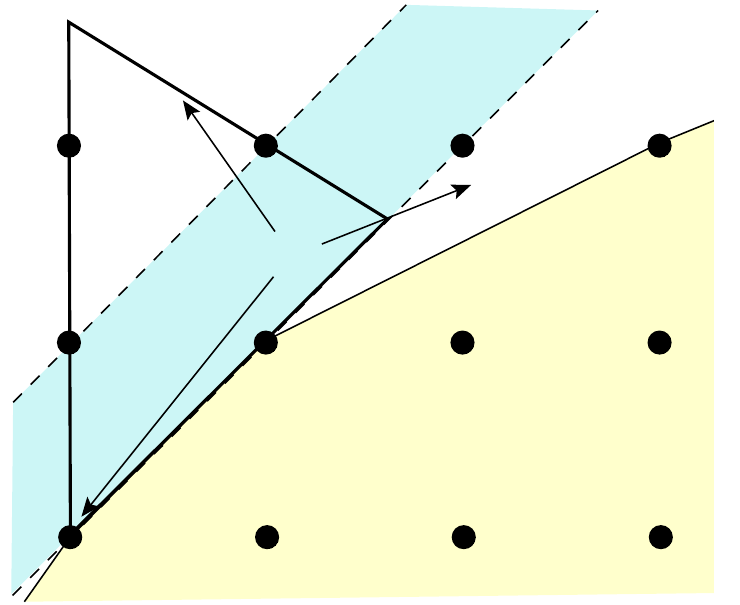
\fi
\caption{Proposition \ref{prop:type2-abstractly}, Step 5: Uniquely determining Type 2 triangles that are in Case~c using a facet of $(C(r^{j_1}, r^{j_2})_\IH)$ and the integer points $y^2$ and $y^4$.}
\label{fig:Type2ChoicesC}
\end{figure}

\noindent \textit{Case c.} We now count the vectors $\gamma(B)$ such that $M(B)$ satisfies the conditions of Case~c with respect to our set of rays $\{r^1, \ldots, r^k\}$. Consider any such $M(B)$. Then there exist $r^{j_1}, r^{j_2}$ such that $p^{j_1}, p^{j_2} \in F_1$ and $\#([p^{j_1},
p^{j_2}]\cap\Z^2) \geq 2$, where $p^{j_1}, p^{j_2}$ are the ray intersections for
$r^{j_1}, r^{j_2}$, respectively. Moreover,  $p^{j_1}$ is an integer point on the facet
$F_3$. There are $2 \times {k \choose 2}$ ways to choose $r^{j_1}, r^{j_2}$ from the
set $\{r^1, \ldots, r^k\}$ with $r^{j_1}$ pointing from $f$ to $F_1 \cap F_3$.  See
Figure~\ref{fig:Type2ChoicesC}.

We next choose  $\aff(F_1)$ as the affine hull of a facet of
$(C(r^{j_1},r^{j_2}))_\IH$, using Lemma~\ref{rem:cone_vertex}. Since $p^{j_1}$ is the integer point that $r^{j_1}$ points to, one of the facets of $(C(r^{j_1},r^{j_2}))_\IH$ that is incident to the vertex~$p^{j_1}$  is unbounded and lies on the infinite ray $f + \R_+r^{j_1}$, while the other facet is bounded.  Since $F_1$ must be bounded, there is a unique choice of $\aff(F_1)$ as the affine hull of the bounded facet of~$(C(r^{j_1},r^{j_2}))_\IH$ that is incident to the vertex~$p^{j_1}$.

%

Now we pick the integer points $y^2, y^4$ where $y^2$ is the integer point on the facet $F_2$ of $M(B)$ and $y^4$ is the integer point in the relative interior of $F_3$ that is closest to $p^{j_1}$. This analysis is the same as with Cases b\oldstylenums{1} and b\oldstylenums{2}. In Case c\oldstylenums{1}, these points are uniquely determined by $f$.  In Case c\oldstylenums{2}, these are uniquely determined by one of the rays pointing between them. There are $O(k)$ ways of choosing this ray.

The statement of Case c implies that either there is also a corner ray
pointing to $F_1\cap F_2$, 
or $P\cap \relint(F_2)\setminus\Z^2=\emptyset$.

If there is a corner ray pointing to $F_1\cap F_2$, then the triangle
is uniquely determined by the two corner rays, $\aff(F_1)$, $y^2$, and $y^4$. 

On the other hand, if $P\cap \relint(F_2)\setminus\Z^2=\emptyset$, then
$F_2$ can be chosen in any possible way such that no
ray points to $\relint(F_2)\setminus\Z^2$. Then the triangle is uniquely
determined by $r^1, \aff(F_1), \aff(F_2), y^2$, and~$y^4$. 

Therefore, there are only polynomially many possibilities for Case c.
\medbreak

\noindent \textit{Case d.} We consider Type 2 triangles with a corner ray $r^{j_1}$ pointing  from $f$ to $F_1\cap F_3$. We label the closest integer point in $\relint(F_3)$ to $F_1 \cap F_3$ as $y^3$, and the next closest integer point in $\relint(F_3)$ as $y^4$. Also, since $P \cap (y^3, y^4) \neq \emptyset$, there exists a ray $r^{j_3}$ that points from $f$  through  $(y^3,y^4)$ (we use the notation $r^{j_3}$ to remind ourselves that it points to $F_3$). Moreover, the condition that no two ray intersections on $F_3$ can contain two (or more) integer points between them implies that the ray intersections on $F_3$ are contained in the segment $[F_1\cap F_3, y^4]$. As before, $y^1$ and $y^2$ will denote the integer points on the facets $F_1$ and $F_2$.
\smallbreak
\indent \textit{Case d\oldstylenums{1} and Case d\oldstylenums{2}.}
For these two cases, there exists a ray $r^{j_2}$ that points from $f$ through
$(y^1, y^2)$ to $F_1$ (for example, in Case d\oldstylenums{1} this will be the
corner ray pointing from $f$ to $F_1\cap F_2$). 
Observe that $[p^{j_1},p^{j_2}] \cap \Z^2 = \{ y^1 \}$ and $[p^{j_1},p^{j_3}]
\cap \Z^2 = \{ y^3 \}$, where $y^1$ and $y^3$ lie in $(p^{j_1},p^{j_2})$ and $(p^{j_1},p^{j_3})$, respectively.
We now count the choices of these triangles. \smallbreak

First pick rays $r^{j_1}, r^{j_2}, r^{j_3}$, for which there are ${k \choose 3}$ ways to do this. 
Pick $y^1$ as a vertex of $(C(r^{j_1}, r^{j_2}))_\IH$ and pick $y^3$ as a vertex of  $(C(r^{j_1}, r^{j_3}))_\IH$, utilizing Lemma~\ref{rem:cone_vertex}~(i).  
By Lemma~\ref{rem:ray_cone}, there are only polynomially many ways to do this.

\begin{claim} The vector $\gamma(B)$ is uniquely determined by the choices of $r^{j_1}, r^{j_2}, r^{j_3}, y^1$, and $y^3$ in Case d\oldstylenums{1} and Case d\oldstylenums{2}.

\end{claim}
\begin{claimproof}
First note that $[y^2, y^4]$ is necessarily parallel to $[y^1, y^3]$.  Therefore, regardless of the choice of $y^2, y^4$, the half-space $H_{2,4}$ is already determined by $[y^1, y^3]$.  Recall that, by assumption, $f\notin H_{2,4}$. Define the family of splits $$
\mathcal{S} = \biggl\{\,  S \in\R^{3\times2} \mathrel{\bigg|}
\begin{array}{@{}l@{}}
  y^1,y^3 \in M(S), 
  \text{ and } M(S) \cap \intr(H_{2,4}) \neq \emptyset,\\
  M(S) \text{ is a maximal lattice-free split}
\end{array}
\, \biggr\}.
$$
For any distinct $M(S_1), M(S_2)$ with $S_1,S_2\in \mathcal{S}$, since they
both contain $[y^1, y^3]$, we  find that $M(S_1) \cap M(S_2) \setminus H_{2,4}
= \emptyset$.  Since $f \notin H_{2,4}$, there exists a unique $M(S)$ with $S
\in \mathcal{S}$ such that $f \in M(S)$.  Therefore, the unique choices for $y^2,
y^4$ are the two points given by $\partial M(S) \cap \partial H_{2,4}$.  
    
Now we show how to choose $M(B)$. The affine hull of facet $F_3$ is determined
by the segment $[y^3, y^4] \subset F_3$, and $\aff(F_1)$ is determined by
$[p^{j_1}, y^1] \subset F_1$, where  $p^{j_1}$ is the corner ray intersection of $r^{j_1}$ on $F_3$.  Lastly, $\aff(F_2)$  must be chosen.  For Case d\oldstylenums{1}, $r^{j_2}$ is chosen as a corner ray pointing from $f$ to $F_1 \cap F_2$, and therefore, $\aff(F_2)$ is determined by the ray intersection $p^{j_2}$ of $r^{j_2}$ on $F_1$, and by $y^2$, i.e., by the segment $[p^{j_2}, y^2]$.  For Case d\oldstylenums{2}, any choice of $\aff(F_2)$ such that there are no rays pointing from $f$ to  $\relint(F_2)\setminus \Z^2$ and such that $f \in M(B)$ will yield the same vector $\gamma(B)$, thus, we only need to consider one such triangle. 
\end{claimproof}
Since the vector $\gamma(B)$ is uniquely determined by these choices, there are only polynomially many possibilities for this case.
\medbreak
\indent \textit{Case d\oldstylenums{3}.}
For this case, there exists a corner ray $r^{j_2}$ that points from $f$ to $F_1 \cap F_2$.  Since $P \not \subset H_{2,4}$, there also exists a ray $r^{j_4}$ such that it points  from $f$ through $(y^2, y^4)$.  Since $r^{j_1}$ is a corner ray pointing  from $f$ to $F_1 \cap F_3$ and the ray intersections are contained in $[F_1\cap F_3, y^4]$, $r^{j_4}$ must be chosen to point from $f$  to  $F_2$.\smallbreak

We now count triangles of this description.  First pick rays $r^{j_1},
r^{j_2}, r^{j_3}, r^{j_4}$ from the set $\{r^1, \ldots, r^k\}$.  There are at
most ${k \choose 4}$ ways to do this. According to Lemma~\ref{rem:cone_vertex}~(i), pick $y^1$ as a vertex of $(C(r^{j_1}, r^{j_2}))_\IH$, $y^2$ as a vertex of $(C(r^{j_2}, r^{j_4}))_\IH$, and $y^3$ as a vertex of  $(C(r^{j_1}, r^{j_3}))_\IH$.  By Lemma \ref{rem:ray_cone}, there are only polynomially many ways to do this. 
Then $y^4$ is uniquely determined since $y^1, y^2, y^3, y^4$ form an area 1
parallelogram. The affine hull of~$F_3$ is uniquely determined, since it runs
along $[y^3, y^4]$. Since $r^{j_1}$ is a corner ray pointing to $F_1\cap F_3$,
the choice of $y^1$ fixes $\aff(F_1)$. Finally, since $r^{j_2}$ is a corner ray
pointing to $F_1 \cap F_2$, the choice of $y^2$ fixes $\aff(F_2)$. Therefore,
there are only polynomially many Type 2 triangles satisfying the conditions of this case.
\medskip

This concludes the proof of the fact that there are only a polynomial (in the binary encoding sizes of $f, r^1, \ldots, r^k$) number of vectors $\gamma(B)$ such that $M(B)$ is a Type 2 triangle satisfying Cases a, b, c and d.
\end{proof}


\subsection{Proof of Proposition~\ref{prop:almost-extreme}}\label{sec:proofs} 
\begin{proof}[Proof of Proposition~\ref{prop:almost-extreme}]
Let $\Xi = \Xi_0\cup\Xi_1\cup\Xi_2\cup\Xi_3$ using the sets $\Xi_i$ from
Propositions~\ref{prop:counting_type3}, \ref{prop:counting_splits}, \ref{prop:counting_type1} and \ref{prop:type2-abstractly}.
We show that for any $\gamma \in \Delta \setminus \Xi$, $\gamma$ is dominated by some $\gamma'\in \Delta$, or $\gamma$ is a strict convex combination of some $\gamma_1, \gamma_2\in \Delta$. If $\gamma \not\in \Pi\cup \Delta_1 \cup \Delta_2 \cup \Delta_3$, then $\gamma$ cannot be realized by a maximal lattice-free split or triangle and so $\gamma = \gamma(B)$ for some $B\in \R^{3\times 2}$ such that $M(B)$ is \emph{not} a maximal lattice-free convex set. This implies that there exists $B'\in \R^{3\times 2}$ such that $M(B')$ is a maximal lattice-free convex set containing $M(B)$ and $\gamma$ is dominated by $\gamma(B')$. 

So we consider $\gamma \in \Pi\cup \Delta_1 \cup \Delta_2 \cup \Delta_3$. Observe that $\Pi\cup \Delta_1 \cup \Delta_2 \cup \Delta_3 = \Pi
  \cup (\Delta_1 \setminus (\Pi\cup \Delta_2)) 
\cup \Delta_3
  \cup (\Delta_2\setminus(\Delta_3\cup \Pi))$ and so $\gamma$ is in one of the sets $\Pi$, $\Delta_1 \setminus (\Pi\cup \Delta_2)$, $\Delta_3$ or $\Delta_2\setminus(\Delta_3\cup \Pi)$. Since $\gamma \not\in \Xi_0\cup\Xi_1\cup\Xi_2\cup\Xi_3$, we have that $\gamma$
is in one of the four sets $(\Pi \setminus \Xi_0)$, $(\Delta_1 \setminus (\Xi_1\cup \Pi\cup \Delta_2))$, $(\Delta_3\setminus\Xi_3)$ or $(\Delta_2\setminus(\Xi_2\cup\Delta_3\cup \Pi))$.
Now it follows from Propositions~\ref{prop:counting_type3}, \ref{prop:counting_splits}, \ref{prop:counting_type1} and
\ref{prop:type2-abstractly} that $\gamma$ is dominated by some $\gamma' \in\Delta$, or $\gamma$ is a
strict convex combination of some $\gamma^1, \gamma^2\in \Delta$. Furthermore,
the cardinality $\# \Xi{}$, being bounded above by the sum of the
cardinalities of~$\Xi_i$, $i=0,\dots,3$, is polynomial in the binary encoding sizes of $f, r^1, \dots, r^k$. 
\end{proof}


\section{Proof of Theorem \ref{thm:main}}
\label{final}

In this section, we will complete the proof of Theorem~\ref{thm:main}. As stated in the introduction, the only result we will need from Section~\ref{sec:finiteness} is Theorem~\ref{thm:polynomial}. Apart from this, we will utilize the results proved in Section~\ref{preliminaries}.  We first state the following proposition.

\begin{prop}\label{prop:polarity}
  Let $\|\cdot\|$ be a norm on the space of matrices~$\R^{3\times 2}$. 
  Let $\mathcal{B}$ be a family of matrices in $\R^{3\times 2}$. If there exists $\epsilon > 0$ such that $\B(f, \epsilon) \subseteq M(B)$ for all $B \in \mathcal{B}$, then there exists a real number $M$ depending only on $\epsilon$ such that $\lVert B \rVert < M$ for all $B \in \mathcal{B}$.
\end{prop}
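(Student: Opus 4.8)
The plan is to use the explicit inequality description $M(B) = \{\,x \in \R^2 \st b^i\cdot(x-f) \leq 1,\ i=1,2,3\,\}$, where $b^1, b^2, b^3$ are the rows of $B$, to bound each row $b^i$ separately, and then to conclude using the equivalence of all norms on the finite-dimensional space $\R^{3\times 2}$.

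First I would fix an arbitrary $B = (b^1; b^2; b^3) \in \mathcal{B}$ and a row index $i \in \{1,2,3\}$. If $b^i = 0$ there is nothing to bound for that row, so assume $b^i \neq 0$ and consider the test point $x^i = f + \epsilon \frac{b^i}{\lVert b^i\rVert_2}$. Since $\lVert x^i - f\rVert_2 = \epsilon$, we have $x^i \in \B(f,\epsilon) \subseteq M(B)$, so the defining inequality of $M(B)$ indexed by $i$ yields $\epsilon\lVert b^i\rVert_2 = b^i\cdot(x^i - f) \leq 1$, that is, $\lVert b^i\rVert_2 \leq 1/\epsilon$. This bound holds for every row of every $B \in \mathcal{B}$, so $\lVert B\rVert_F = \bigl(\sum_{i=1}^3 \lVert b^i\rVert_2^2\bigr)^{1/2} \leq \sqrt{3}/\epsilon$ for all $B \in \mathcal{B}$, where $\lVert\cdot\rVert_F$ denotes the Frobenius norm.

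Finally, since $\R^{3\times 2}$ is finite-dimensional, there is a constant $c > 0$ depending only on the norm $\lVert\cdot\rVert$ (and not on $\mathcal{B}$) such that $\lVert A\rVert \leq c\lVert A\rVert_F$ for all $A \in \R^{3\times 2}$. Hence $\lVert B\rVert \leq c\sqrt{3}/\epsilon$ for all $B\in\mathcal{B}$, and we may take $M = c\sqrt{3}/\epsilon + 1$, which depends only on $\epsilon$ (once the ambient norm $\lVert\cdot\rVert$ is fixed). The argument is essentially immediate; the only points that require a moment's thought are the choice of the test point $x^i$ certifying the bound on $\lVert b^i\rVert_2$, and the observation that the norm-equivalence constant $c$ does not depend on the family $\mathcal{B}$, so that $M$ indeed depends only on $\epsilon$.
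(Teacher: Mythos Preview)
Your proof is correct and follows the same idea as the paper's: pick a test point in the ball $\B(f,\epsilon)$ in the direction of the row $b^i$ and plug it into the defining inequality $b^i\cdot(x-f)\leq 1$ of $M(B)$. The paper uses the test point $f+\epsilon b^i$ and obtains $\lVert b^i\rVert_2\leq 1/\sqrt{\epsilon}$, whereas you normalize and use $f+\epsilon\,b^i/\lVert b^i\rVert_2$, giving $\lVert b^i\rVert_2\leq 1/\epsilon$. Your version is actually the more careful one: the paper's test point $f+\epsilon b^i$ lies in $\B(f,\epsilon)$ only when $\lVert b^i\rVert_2\leq 1$, so strictly speaking the paper's argument is circular without your normalization step. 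Your explicit appeal to norm equivalence on $\R^{3\times 2}$ to pass from the Frobenius bound to the arbitrary norm $\lVert\cdot\rVert$ is also a detail the paper leaves implicit.
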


\begin{proof}
Since $\B(f, \epsilon) \subseteq M(B)$, the point $f + \epsilon b^i \in M(B)$, where $b^i$ is the $i$-th row of $B$. Therefore, $b^i\cdot(f + \epsilon b^i - f) \leq 1$. Therefore, $\lVert b^i \rVert_2 \leq \frac{1}{\sqrt\epsilon}$. Since this holds for every row $b^i$, there exists $M$ depending only on $\epsilon$ such that $\lVert B \rVert < M$.
\end{proof}
We will use the following set to derive a bound on a sequence of matrices to show there exists a convergent subsequence. For any vector $\gamma\in\R^k_+$, define

\[
\M_\gamma = \conv(\{f\}\cup \{\, f + \tfrac{1}{\gamma_j}r^j \st \gamma_j \neq 0\,\}) + \cone(\{\,r^j \st\gamma_j = 0\,\}).
\]

\begin{obs}\label{obs:inclusion}
For all $B\in \R^{3\times 2}$ we have the inclusion $\M_{\gamma(B)} \subseteq M(B)$. 
\end{obs}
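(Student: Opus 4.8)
The plan is to verify the inclusion generator-by-generator, using only the defining description $M(B)=\{x\in\R^2\st b^i\cdot(x-f)\le 1,\ i=1,2,3\}$ and the formula $\gamma(B)_j=\psi_B(r^j)=\max_i b^i\cdot r^j$. First I would record that $f\in M(B)$, since $b^i\cdot(f-f)=0\le 1$ for each $i$. Next, for every index $j$ with $\gamma(B)_j=\psi_B(r^j)\neq 0$ (necessarily $>0$, since for the matrices relevant here $\gamma(B)\in\R^k_+$, as was observed when $\Delta$ was introduced), I would check that the point $p^j=f+\tfrac{1}{\psi_B(r^j)}r^j$ lies in $M(B)$: for each row $b^i$ we have $b^i\cdot(p^j-f)=\tfrac{1}{\psi_B(r^j)}\,b^i\cdot r^j\le \tfrac{1}{\psi_B(r^j)}\,\psi_B(r^j)=1$, using $b^i\cdot r^j\le\psi_B(r^j)$ and dividing by the positive number $\psi_B(r^j)$.

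The third ingredient is to show that every $r^j$ with $\gamma(B)_j=\psi_B(r^j)=0$ is a recession direction of $M(B)$. Indeed, if $x\in M(B)$ and $t\ge 0$, then for each $i$, $b^i\cdot(x+tr^j-f)=b^i\cdot(x-f)+t\,(b^i\cdot r^j)\le 1+t\,\psi_B(r^j)=1$, since $b^i\cdot r^j\le\psi_B(r^j)=0$. Hence $\cone(\{\,r^j\st\gamma(B)_j=0\,\})$ is contained in the recession cone of $M(B)$.

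Finally I would assemble these facts: $M(B)$ is a (closed) convex set containing $f$ and all the points $p^j$ with $\gamma(B)_j\neq 0$, hence it contains $\conv(\{f\}\cup\{\,p^j\st\gamma(B)_j\neq 0\,\})$; adding to any point of this polytope a nonnegative combination of the recession directions $r^j$ with $\gamma(B)_j=0$ keeps us inside $M(B)$. Therefore $\M_{\gamma(B)}=\conv(\{f\}\cup\{\,p^j\st\gamma(B)_j\neq 0\,\})+\cone(\{\,r^j\st\gamma(B)_j=0\,\})\subseteq M(B)$. There is no real obstacle here—the statement is a direct verification from the definitions; the only point worth a word is that $\M_{\gamma(B)}$ is only defined when $\gamma(B)\in\R^k_+$, which is exactly the regime in which this observation will be applied.
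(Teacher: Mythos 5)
Your proof is correct and follows the same route as the paper's: verify $f\in M(B)$, check that each ray intersection $f+\tfrac{1}{\psi_B(r^j)}r^j$ with $\psi_B(r^j)>0$ lies in $M(B)$, show that each $r^j$ with $\psi_B(r^j)=0$ is a recession direction of $M(B)$, and then conclude by convexity. Your closing remark that the statement implicitly presupposes $\gamma(B)\in\R^k_+$ (the only regime in which $\M_{\gamma(B)}$ is even defined, and the one in force whenever the observation is invoked) is accurate and a useful clarification the paper leaves tacit.
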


\begin{proof}
Clearly $f \in M(B)$. Next observe that $f +  \frac{1}{\psi_B(r^j)}r^j \in M(B)$ if $\psi_B(r^j) > 0$. Finally, $\psi_B(r^j) = 0$ implies that $r^j$ is in the recession cone of $M(B)$. The claim follows. 
\end{proof}\begin{theorem}\label{thm:finite_extreme}
Assume that $f \in \Q^2$ and $r^j \in \Q^2$ for all $j \in \{1, \ldots, k\}$. If $\cone(\{r^1, \ldots, r^k\})  =\R^2$, then $\Delta'$ has a polynomial (in the binary encoding sizes of $f, r^1, \ldots, r^k$) number of extreme points.
\end{theorem}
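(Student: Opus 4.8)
The plan is to combine Theorem~\ref{thm:polynomial} with a compactness argument showing that $\Delta$ is, in effect, closed near its extreme points, so that every extreme point of $\Delta'$ actually lies in $\Delta$ and hence in the polynomial-size set $\Xi$.

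First I would reduce to points of $\Delta$. Let $x$ be an extreme point of $\Delta'$. By Observation~\ref{obs:minimal}, $x$ is minimal, so writing $x = a + r$ with $a \in \cl(\conv(\Delta))$ and $r \in \R^k_+$ forces $r = 0$ (otherwise $x$ is dominated by $a$); thus $x \in \cl(\conv(\Delta))$, and since $\cl(\conv(\Delta)) \subseteq \Delta'$, $x$ is also an extreme point of $\cl(\conv(\Delta))$. By Lemma~\ref{lem:seq_extreme} applied with $A = \Delta$, there is a sequence $\gamma^n = \gamma(B^n) \in \Delta$, with $M(B^n)$ lattice-free, such that $\gamma^n \to x$.

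Second, I would show this limit is attained inside $\Delta$, i.e. $x = \gamma(B)$ for some $B$ with $M(B)$ lattice-free. Since $\gamma^n \to x$, for all large $n$ we have $\gamma^n_j \le x_j + 1$, so $\tfrac{1}{\gamma^n_j} \ge \tfrac{1}{x_j+1}$ whenever $\gamma^n_j > 0$; by Observation~\ref{obs:inclusion}, $M(B^n) \supseteq \M_{\gamma^n} \supseteq \conv\bigl(\{f\} \cup \{\, f + \epsilon_0 r^j \st j = 1,\dots,k \,\}\bigr)$, where $\epsilon_0 = \min_j \tfrac{1}{x_j+1} > 0$ (rays with $\gamma^n_j = 0$ only help, since $\M_{\gamma^n}$ then contains the entire ray $f + \R_+ r^j$). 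This is exactly where $\cone\{r^1,\dots,r^k\} = \R^2$ enters: it makes $f$ an interior point of that fixed polytope, so there is a fixed $\epsilon > 0$ with $\B(f,\epsilon) \subseteq M(B^n)$ for all large $n$. By Proposition~\ref{prop:polarity}, the sequence $(B^n)$ is bounded, so some subsequence $B^{n_\ell} \to B$. Then $M(B)$ is lattice-free: any $z \in \intr(M(B)) \cap \Z^2$ satisfies $b^i\cdot(z-f) < 1$ for all $i$, hence $b^{n_\ell,i}\cdot(z-f) < 1$ for large $\ell$, putting $z \in \intr(M(B^{n_\ell}))$, a contradiction. Finally, $\gamma(B^{n_\ell})_j = \max_i b^{n_\ell,i}\cdot r^j \to \max_i b^i\cdot r^j = \gamma(B)_j$, so $x = \gamma(B) \in \Delta$. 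Combining with Theorem~\ref{thm:polynomial}, which says that no extreme point of $\Delta'$ lies in $\Delta \setminus \Xi$, we conclude $x \in \Xi$. Hence every extreme point of $\Delta'$ belongs to $\Xi$, whose cardinality is polynomial in the binary encoding sizes of $f, r^1,\dots,r^k$.

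The main obstacle is the second step: producing a convergent subsequence of the matrices $B^n$, i.e. ruling out that $\|B^n\| \to \infty$. This is precisely where the full-dimensionality hypothesis $\cone\{r^1,\dots,r^k\} = \R^2$ is essential, through the set $\M_\gamma$, Observation~\ref{obs:inclusion}, and Proposition~\ref{prop:polarity}; without it the $M(B^n)$ could be long, thin lattice-free sets whose defining matrices blow up, and the limit argument would fail. The general case (arbitrary cone of rays) is handled by a separate reduction in Section~\ref{final}.
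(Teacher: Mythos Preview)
Your proposal is correct and follows essentially the same route as the paper's proof: reduce to $x\in\cl(\conv(\Delta))$ via minimality, approximate by a sequence in~$\Delta$ using Lemma~\ref{lem:seq_extreme}, use the full-dimensional cone hypothesis together with Observation~\ref{obs:inclusion} and Proposition~\ref{prop:polarity} to bound the matrices, pass to a convergent subsequence to land the limit in~$\Delta$, and finish with Theorem~\ref{thm:polynomial}. If anything, you are slightly more explicit than the paper in justifying that $x$ is an extreme point of $\cl(\conv(\Delta))$ before invoking Lemma~\ref{lem:seq_extreme}, and in arguing directly that the limit set $M(B)$ is lattice-free.
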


\begin{proof}
Consider any extreme point $x$ of $\Delta'$. By Observation~\ref{obs:minimal}, $x \in \cl(\conv(\Delta))$. By Lemma~\ref{lem:seq_extreme}, there exists a sequence $(a^n)$ of points from $\Delta$ such that $(a^n)$ converges to $x$. 

\begin{claim} There exists a bounded sequence of matrices $(B_n) \in \R^{3 \times 2}$ such that $\gamma(B_n) = a^n$ and $M(B_n)$ is lattice-free.
\end{claim} 
\begin{claimproof}
Since $(a^n)$ converges to $x$, there exists $N \in \mathbb{N}$ such that $a^n_i \leq x_i + 1$ for every $n \geq N$ and $i \in \{1, \ldots, k\}$.
Since $(a^n)$ is a sequence in $\Delta$, there exists a sequence of matrices $(B_n)$ such that $(a^n)= (\gamma(B_n))$ and $M(B_n)$ is lattice-free for all $n \in \mathbb N$. Consider the sequence of polyhedra $\M_{\gamma(B_n)}$.  Let $\epsilon = \frac{1}{1 + \max_{i}{x_i}}$.

By the definition of $N$, for every $n \geq N$, we have that $\frac{1}{a^n_i} \geq \epsilon$. Since the conical hull of the rays $r^1, \ldots, r^k$ is $\R^2$, this implies that there exists $\bar \epsilon$ such that $\B(f, \bar \epsilon) \subseteq \M_{\gamma(B_n)}$ for all $n \geq N$. By Observation~\ref{obs:inclusion}, $\M_{\gamma(B_n)} \subseteq M(B_n)$. Therefore, for every $n \geq N$, $\B(f, \bar \epsilon) \subseteq M(B_n)$. Proposition~\ref{prop:polarity} implies that there exists a real number $M$ depending only on $\bar \epsilon$ such that $\lVert B_n \rVert \leq M$ for all $n \geq N$. This implies that $(B_n)$ is a bounded sequence.
\end{claimproof}

By the Bolzano--Weierstrass theorem, we can extract a convergent subsequence
$(\bar B_n)$ converging to a point~$\bar B$.  The map
$B\mapsto \gamma(B)$ is continuous because $\psi_B(r)$ is continuous in~$B$
for every fixed~$r$. Therefore, the sequence $(\gamma(\bar B_n))$ converges to $\gamma(\bar
B)$.  By assumption $(a^n) = (\gamma(B_n))$ converges to~$x$ and therefore
$\gamma(\bar B) = x$. Moreover, since $M(\bar B_n)$ is lattice-free for all
$n\in \N$, $M(\bar B)$ is also lattice-free and hence it is a lattice-free
triangle or a lattice-free split. Thus, $x = \gamma(\bar B) \in \Delta$ since $M(\bar B)$ is a lattice-free triangle or lattice-free split.  Therefore, we have shown that for every extreme point~$x$
of~$\Delta'$, we have that $x \in \Delta$. 

Let $\Xi{}$ be the set from Theorem~\ref{thm:polynomial}. Theorem~\ref{thm:polynomial} implies that the extreme point $x \not\in \Delta \setminus \Xi{}$. Since we show that $x \in \Delta$, this implies $x \in\Xi{}$.  Since $\#\Xi{}$ is polynomial in the encoding sizes of $f, r^1, \dots, r^k$, we have shown this property for the number of extreme points $\Delta'$ as well.
\end{proof}
   
\begin{corollary}\label{cor:polyhedron}
Assume that $f \in \Q^2$ and $r^j \in \Q^2$ for all $j \in \{1, \ldots, k\}$. If $\cone(\{r^1, \ldots, r^k\}) = \R^2$, then the triangle closure $T$ is a polyhedron with a polynomial (in the binary encoding sizes of $f, r^1, \ldots, r^k$) number of facets.
\end{corollary}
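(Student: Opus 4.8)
The plan is to obtain Corollary~\ref{cor:polyhedron} as an immediate consequence of Theorem~\ref{thm:finite_extreme} together with Lemma~\ref{lem:extreme_rep}. First I would invoke Theorem~\ref{thm:finite_extreme}: under the stated hypotheses (rational data, and $\cone(\{r^1,\dots,r^k\}) = \R^2$), the set $C$ of extreme points of $\Delta'$ is finite, and $\#C$ is bounded by a polynomial in the binary encoding sizes of $f, r^1, \dots, r^k$.

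Next I would appeal to Lemma~\ref{lem:extreme_rep}, which gives the representation
\[
T = \{\, s \in \R^k_+ \st a\cdot s \geq 1 \textrm{ for all } a \in C\,\}.
\]
Since $C$ is finite, the right-hand side is the intersection of the finitely many halfspaces $\{\,s \st a\cdot s \geq 1\,\}$, $a\in C$, with the orthant $\R^k_+ = \{\,s \st s_j \geq 0,\ j=1,\dots,k\,\}$. Thus $T$ is the solution set of $\#C + k$ linear inequalities, hence a polyhedron. For the facet count, any polyhedron described by $N$ linear inequalities has at most $N$ facets (each facet is supported by at least one of the given inequalities), so $T$ has at most $\#C + k$ facets, which is polynomial in the encoding size of the input by Theorem~\ref{thm:finite_extreme}.

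I do not expect any genuine obstacle in this corollary: all the substantive work is carried out in Theorem~\ref{thm:finite_extreme} (and, behind it, Theorem~\ref{thm:polynomial} and the counting arguments of Section~\ref{sec:finiteness}). The only point to be mildly careful about is that we are bounding the number of facets rather than exhibiting an irredundant description; but the inequality ``number of facets $\le$ number of defining inequalities'' needs no irredundancy, so the polynomial bound transfers directly from the number of extreme points of $\Delta'$ to the number of facets of $T$, and likewise emptiness or low-dimensionality of $T$ causes no difficulty since such sets are still polyhedra with finitely many facets.
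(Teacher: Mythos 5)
Your proposal is correct and is exactly the paper's argument: the paper's proof of this corollary is the one-liner ``Lemma~\ref{lem:extreme_rep} and Theorem~\ref{thm:finite_extreme} together imply the corollary,'' and you have simply spelled out that implication. The observation that the number of facets is bounded by the number of defining inequalities $\#C + k$ is the right way to make the facet-count claim precise.
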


\begin{proof}
Lemma~\ref{lem:extreme_rep} and Theorem~\ref{thm:finite_extreme} together imply the corollary.
\end{proof}

We can now finally prove Theorem~\ref{thm:main}.

\begin{proof}[Proof of Theorem~\ref{thm:main}]
If $\cone(\{r^1, \ldots, r^k\}) = \R^2$, then Corollary~\ref{cor:polyhedron}
gives the result. Otherwise we add ``ghost'' rays $r^{k+1}, \ldots, r^{k'}$
such that $\cone(\{r^1, \ldots, r^k, r^{k+1}, \ldots, r^{k'}\}) = \R^2$. Now
consider the system~\eqref{M_fk} with the rays $r^1, \ldots, r^{k'}$. We can
similarly define the triangle closure $T'$ for this extended system. Given a matrix $B \in \R^{3\times 2}$, let $\alpha(B) =  (\psi_B(r^i))_{i = 1}^{k'} \in \R^{k'}$ (we will continue to use $\gamma(B) = (\psi_B(r^i))_{i = 1}^{k}$). So $T'$ is defined as
\[
T' = \bigl\{\,s \in \R^{k'}_+ \bigst \alpha(B)\cdot s \geq 1 \textrm{ for all } B\textrm{ such that }M(B)\textrm{ is a lattice-free triangle}\,\bigr\}.
\]

\begin{claim}
$T' \cap \{s_{k+1} = 0, \ldots, s_{k'}=0\} = T \times \{0^{k' - k}\}$. 
\end{claim}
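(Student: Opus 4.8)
The plan is to prove the set equality by two inclusions, both of which reduce to one elementary observation: for every matrix $B\in\R^{3\times2}$, the vector $\alpha(B)=(\psi_B(r^i))_{i=1}^{k'}$ restricted to its first $k$ coordinates is exactly $\gamma(B)=(\psi_B(r^i))_{i=1}^{k}$, and the family of matrices $B$ for which $M(B)$ is a lattice-free triangle does not change when we adjoin the ghost rays, since $M(B)$ depends only on $B$ and $f$. The bridge between the two closures is then the identity $\alpha(B)\cdot(s,0^{k'-k})=\gamma(B)\cdot s$, valid for every $s\in\R^k$, which holds because the padding coordinates $s_{k+1}=\dots=s_{k'}=0$ annihilate the contributions of $\psi_B(r^{k+1}),\dots,\psi_B(r^{k'})$.

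For the inclusion $T\times\{0^{k'-k}\}\subseteq T'\cap\{s_{k+1}=0,\dots,s_{k'}=0\}$, I would take $s\in T$, set $\bar s=(s,0^{k'-k})\in\R^{k'}_+$, and note that for every $B$ such that $M(B)$ is a lattice-free triangle we have $\alpha(B)\cdot\bar s=\gamma(B)\cdot s\ge 1$ by the identity above together with $s\in T$. Hence $\bar s\in T'$, and $\bar s$ obviously lies in the coordinate subspace $\{s_{k+1}=0,\dots,s_{k'}=0\}$.

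For the reverse inclusion, I would take $\bar s\in T'$ with $\bar s_{k+1}=\dots=\bar s_{k'}=0$, write $\bar s=(s,0^{k'-k})$ with $s\in\R^k_+$ (nonnegativity coming from $\bar s\in\R^{k'}_+$), and observe that for every $B$ with $M(B)$ a lattice-free triangle, $\gamma(B)\cdot s=\alpha(B)\cdot\bar s\ge 1$; therefore $s\in T$ and $\bar s\in T\times\{0^{k'-k}\}$.

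There is essentially no obstacle here: the only point requiring a word of care is to make explicit that $T$ and $T'$ are defined through the \emph{same} family of defining matrices — those $B\in\R^{3\times2}$ for which $M(B)$ is a lattice-free triangle — because the geometry of $M(B)$ is intrinsic to $B$ and $f$ and is unaffected by adding ghost rays. Once this is noted, the claim is immediate from the zero-padding identity.
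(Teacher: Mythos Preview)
Your proof is correct and follows essentially the same approach as the paper's own argument: both directions rest on the identity $\alpha(B)\cdot(s,0^{k'-k})=\gamma(B)\cdot s$, which holds because $\alpha(B)$ agrees with $\gamma(B)$ on the first $k$ coordinates and the zero padding kills the rest. The only cosmetic difference is the order in which you treat the two inclusions.
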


\begin{claimproof}
Consider any point $s \in T' \cap \{s_{k+1} = 0, \ldots, s_{k'} = 0\}$ and let $s^k = (s_1, \ldots, s_k)$ be the truncation of $s$ to the first $k$ coordinates. Consider any $a \in \R^k$ such that $a = \gamma(B)$ for some matrix $B$ where $M(B)$ is a lattice-free triangle. Consider $a' = \alpha(B)$. Clearly, $a'_i = a_i$ for $i \in \{1, \ldots, k\}$. Since $a'\cdot s \geq 1$ and $a'\cdot s = a\cdot s^k$, we have that $a\cdot s^k \geq 1$. So, $s \in T \times \{0^{k' - k}\}$.

For the reverse inclusion, consider a point $s \in T \times \{0^{k' - k}\}$ and let $s^k = (s_1, \ldots, s_k)$ be the truncation of $s$ to the first $k$ coordinates. Consider any $a' \in \R^{k'}$ such that $a' = \alpha(B)$ for some matrix $B$ where $M(B)$ is a lattice-free triangle. Let $a = \gamma(B)$. As before, $a'_i = a_i$ for $i \in \{1, \ldots, k\}$. Since $a\cdot s^k \geq 1$ and $a'\cdot s = a\cdot s^k$, we have that $a'\cdot s \geq 1$. So, $s \in T' \cap \{s_{k+1} = 0, \ldots, s_{k'} = 0\}$.
\end{claimproof}

Since  $\cone(\{r^1, \ldots, r^k, r^{k+1}, \ldots, r^{k'}\}) = \R^2$, Corollary~\ref{cor:polyhedron} says that $T'$ is a polyhedron with a polynomial (in the binary encoding sizes of $f, r^1, \ldots, r^k$) number of facets. Since $T' \cap \{s_{k+1} = 0, \ldots, s_{k'}=0\} = T \times \{0^{k' - k}\}$, this shows that $T$ is a polyhedron with a polynomial number of facets.
\end{proof}

This concludes the part of the paper which deals with the result that the triangle closure is a polyhedron.

\section{Proof of Theorems~\ref{THM:POLYFACETS} and~\ref{thm:enumerate}}\label{sec:polynomiality}

We now complete our second result showing that the mixed integer hull $\conv(R_f)$ has only polynomially many facets.
We first make a counting argument for quadrilaterals that is similar to the counting arguments in Section~\ref{sec:counting}. For quadrilaterals, Cornu\'ejols and Margot~\cite{cm} defined the \emph{ratio condition}
as a necessary and sufficient condition to yield an extreme inequality when all corner rays
are present.  Suppose $p^{j_1}, p^{j_2}, p^{j_3}, p^{j_4}$ are the corner ray intersections
assigned in a counter-clockwise orientation, and $y^i$ is the integer point
contained in $[p^{j_i}, p^{j_{i+1}}]$, where we set $j_5 = j_1$.  The ratio condition holds if there does not
exist a scalar $t > 0$ such that 
\begin{equation}\label{eq:ratio-cond}
\frac{\Vert y^i - p^{j_i}\Vert_2}{\Vert y^i - p^{j_{i+1}}\Vert_2} = \begin{cases}
t & \text{for} \ i = 1,3\\
\frac{1}{t} & \text{for} \ i = 2,4.
\end{cases}
\end{equation}
This is illustrated in Figure~\ref{fig:ratio-cond}.

\begin{figure}
\centering
\ifpdf
\input{figureRatioCondition.pdftex_t}
\else
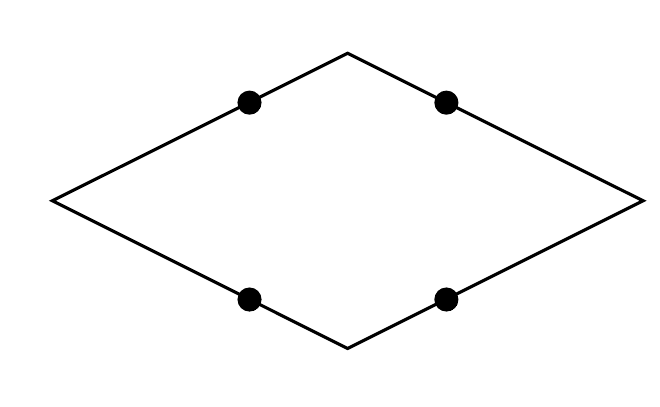
\fi
\caption{Example of a quadrilateral for which the ratio condition does \emph{not}
  hold, i.e., there exists $t>0$ satisfying \eqref{eq:ratio-cond}. Here
  $\dim \mathcal N(\Y) \neq 0$.}
\label{fig:ratio-cond}
\end{figure}

\begin{proposition}[Counting Quadrilaterals]\label{prop:counting_quads} There exists a finite set $\Xi_4 \subseteq \R^k_+$ such that if $\gamma(B)\not\in \Xi_4$ for some $B \in \R^{4 \times 2}$ where $M(B)$ is a maximal lattice-free quadrilateral, then $\gamma(B)$ is not extreme. Moreover, the cardinality of $\Xi_4$ is bounded by a polynomial in the binary encoding size of $f, r^1, \ldots, r^k$.
Specifically,  $\Xi_4$ can be chosen
as the set of all $\gamma(B)$ such that $M(B)$ is a maximal lattice-free quadrilateral
satisfying one of the following:\\ 
\textbf{Case a.} $P \subset \Z^2$. \\
\textbf{Case b.} $M(B)$ has four distinct corner rays and the ratio condition holds.
\end{proposition}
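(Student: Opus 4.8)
The plan is to mirror Propositions~\ref{prop:counting_type3} and~\ref{prop:counting_type1}: first show that a maximal lattice-free quadrilateral $M(B)$ with $\gamma(B)\notin\Xi_4$ yields a vector $\gamma(B)$ that is dominated by, or a strict convex combination of, $\gamma$-vectors of (maximal) lattice-free sets --- and hence is not extreme --- and then bound $\#\Xi_4$ by a polynomial. By Theorem~\ref{mlfcb2}(c), a maximal lattice-free quadrilateral has exactly one integer point $y^i$ in the relative interior of each edge $F_i$ and no integer vertices; I would set $Y_i=\{y^i\}$ so that $\Y=(Y_1,\dots,Y_4)$ covers $Y(B)$.

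The easy cases come first. If $P\subset\Z^2$, then $\gamma(B)\in\Xi_4$ by Case~a. Otherwise $P\not\subset\Z^2$; if $M(B)$ has fewer than four corner rays, Lemma~\ref{lemma:corner_rays_almost} (with $n=4$) immediately expresses $\gamma(B)$ as a strict convex combination of $\gamma(B^1),\gamma(B^2)$ with $M(B^1),M(B^2)$ lattice-free, so $\gamma(B)$ is not extreme. This leaves the case where $M(B)$ has all four (automatically distinct) corner rays $r^{j_1},\dots,r^{j_4}\in R$, intersecting the boundary at the vertices $p^{j_i}$, with $y^i\in(p^{j_i},p^{j_{i+1}})$ on the edge $F_i$ as in the statement; if the ratio condition~\eqref{eq:ratio-cond} holds, then $\gamma(B)\in\Xi_4$ by Case~b.

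So the crux is a quadrilateral with all four corner rays for which the ratio condition \emph{fails}. Here the key step is to show $\dim\mathcal N(B,\Y,R)\geq1$. The defining system of $\mathcal N(B,\Y,R)$ consists of the four facet equations $a^i\cdot(y^i-f)=0$ and the four corner-ray equations $a^i\cdot r^{j_{i+1}}=a^{i+1}\cdot r^{j_{i+1}}$ (indices taken modulo~$4$) --- eight equations in the eight entries of $A$. Parametrizing a tilt of $F_i$ about its pinned point $y^i$ by the signed distances its two endpoints slide along the adjacent corner rays, the corner-ray equations become a cyclic $4\times 4$ linear system in these four slide parameters whose determinant, by a Menelaus-type computation --- essentially the one underlying the ratio condition in~\cite{cm} --- vanishes precisely when~\eqref{eq:ratio-cond} fails. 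Choosing $\bar A=(\bar a^1;\dots;\bar a^4)\in\mathcal N(B,\Y,R)\setminus\{0\}$, I would walk around the $4$-cycle of facets: if $\bar a^i=0$ then the corner-ray equation at $p^{j_{i+1}}$ forces $\bar a^{i+1}\cdot r^{j_{i+1}}=0$, which together with $\bar a^{i+1}\cdot(y^{i+1}-f)=0$ and the linear independence of $r^{j_{i+1}}$ and $y^{i+1}-f$ (since $y^{i+1}\in\relint(F_{i+1})$, $r^{j_{i+1}}$ points to $p^{j_{i+1}}\in F_{i+1}$, and $f\notin\aff(F_{i+1})$) gives $\bar a^{i+1}=0$; iterating yields $\bar A=0$, a contradiction, so every $\bar a^i\neq0$. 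Since $P\not\subset\Z^2$, picking $r\in R$ with $p(B,r)\notin\Z^2$ and $i\in I_B(r)$ gives $\#Y_i=1$, $\bar a^i\neq0$ and $(F_i\cap P)\setminus\Z^2\neq\emptyset$, so Lemma~\ref{obs:dimension}(iv) supplies $\epsilon>0$ with $\gamma(B)$ a strict convex combination of $\gamma(B+\epsilon\bar A)$ and $\gamma(B-\epsilon\bar A)$; hence $\gamma(B)$ is not extreme.

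For the cardinality bound: Case~a contributes a single vector by Observation~\ref{obs:int_intersections}. For Case~b, I would choose the four corner rays among $r^1,\dots,r^k$ in $O(k^4)$ ways; since $[p^{j_i},p^{j_{i+1}}]\cap\Z^2=\{y^i\}$ with $y^i$ in the open segment, Lemma~\ref{rem:cone_vertex}(i) makes $y^i$ a vertex of $(C(r^{j_i},r^{j_{i+1}}))_\IH$, of which there are polynomially many by Lemma~\ref{rem:ray_cone}. When the ratio condition holds, the determinant computation gives $\mathcal N(B,\Y,R)=\{0\}$, so the affine system~\eqref{eq:tilting-1}--\eqref{eq:tilting-2} determined by the four corner rays and the four points $y^i$ has at most one solution; thus the quadrilateral, and so $\gamma(B)$, is uniquely determined by that data, and $\#\Xi_4$ is polynomial in the binary encoding sizes of $f,r^1,\dots,r^k$. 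The main obstacle will be the determinant/Menelaus computation identifying failure of the ratio condition with $\dim\mathcal N(B,\Y,R)\geq1$ --- i.e., producing an explicit simultaneous tilt of all four facets --- which is the quadrilateral analogue of the rigidity arguments used for triangles; everything else parallels the earlier counting propositions.
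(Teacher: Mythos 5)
Your proposal is correct and takes essentially the same approach as the paper; the determinant/Menelaus computation you defer is precisely the paper's separate appendix lemma (Lemma~\ref{lemma:ratio_condition}) characterizing when the ratio condition holds as the rigidity condition $\dim\mathcal N(B,\Y,R)=0$, with unique determination of the quadrilateral by the four corner rays and the four integer points. One small simplification over your cycle argument: rather than walking around the four-cycle to show every $\bar a^i\neq0$, it suffices to take any single $i$ with $\bar a^i\neq0$ --- the corner-ray intersections on $F_i$ are vertices of $M(B)$, which are non-integral because a maximal lattice-free quadrilateral has its integer points only in the relative interiors of its edges, so $(F_i\cap P)\setminus\Z^2\neq\emptyset$ holds automatically and Lemma~\ref{obs:dimension}(iv) applies directly.
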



\begin{proof}
  \Step{1}\
Suppose that $\gamma(B)$ is extreme and that we are not in Case a.
Lemma~\ref{lemma:corner_rays_almost} shows that all four corner rays must
exist.   Suppose, for the sake of contradiction, that the ratio condition does
not hold.  Lemma~\ref{lemma:ratio_condition} in the Appendix then shows that
$\dim\Ny \geq 1$.   Let $\bar A = (\bar a^1; \bar a^2; \bar a^3; \bar a^4) \in
\mathcal \Ny\setminus \{0\}$. Since $\bar A\neq 0$, we have $\bar a^i \neq 0$ for some $i = 1, \dots, 4$.  
Since there are 4 corner rays, $p^j\in F_i$ for some $j = 1, \dots, k$.  Lastly, since $\{y^i\} = Y_i$, Lemma~\ref{obs:dimension} shows that for some $\epsilon >0$, $\gamma(B)$ is a strict convex combination of $\gamma(B + \epsilon \bar A)$ and $\gamma(B - \epsilon \bar A)$ and $M(B + \epsilon \bar A)$, $M(B - \epsilon \bar A)$ are lattice-free quadrilaterals.  Therefore, $\gamma(B)$ is not extreme, which is a contradiction.  Hence, the ratio condition must hold.  \smallbreak

\noindent\Step{2}
We now bound the number of possible vectors in $\Xi_4$.\\
\indent \textit{Case a.} Observation~\ref{obs:int_intersections} shows that
there is a unique $\gamma\in\Xi_3$ corresponding to this case.\smallskip

\indent \textit{Case b.} $M(B)$ has all four corner rays and the ratio
condition holds. By Lemma~\ref{lemma:ratio_condition}, the quadrilateral we construct must be uniquely determined by the choice of four corner rays and choice of four integer points that lie on the facets of the quadrilateral.  
There are $O(k^4)$ ways to pick four rays $r^{j_1}, r^{j_2}, r^{j_3}, r^{j_4}$ to be corner rays.   By Lemma \ref{rem:cone_vertex}~(i),  the four integer points $y^1, y^2, y^3, y^4$ are such that $y^i$ is a vertex of $(C(r^{j_i}, r^{j_{i+1}}))_\IH$, with $i=1,2,3$ and $y^4$ a vertex of $(C(r^{j_4}, r^{j_1}))_\IH$.  Lemma~\ref{rem:ray_cone} shows that there are polynomially many possibilities for $y^1, \ldots, y^4$.
\end{proof}

\begin{proof}[Proof of Theorem~\ref{THM:POLYFACETS}]
We now introduce the set~$\Gamma$ of all vectors~$\gamma(B)$ that come from arbitrary
(not necessarily maximal) lattice-free polyhedra in~$\R^2$,
$$
\Gamma = \bigcup_{n \in \N}\{\,\gamma(B) \st B\in \R^{n\times 2}\textrm{ such
  that }M(B)\textrm{ is a lattice-free convex set} \,\}.
$$ 
Since we consider $B \in \R^{n\times 2}$ for all $n \in \N$, this includes all
$\gamma(B)$ such that $M(B)$ is a lattice-free split, triangle, or
quadrilateral and all other polyhedra that are lattice-free in $\R^2$. It is
easy to verify (see Lemma 1.6 and its proof in~\cite{cm}) that $\conv(R_f)$ is
a polyhedron of {\em blocking type} (see Section 9.2 in~\cite{sch} for a
discussion of blocking polyhedra). In fact, because of the
correspondence~\eqref{eq:mixed-int-hull-by-lattice-free} 
between valid inequalities for the mixed integer
hull and lattice-free sets, one can show that $\Gamma$ is actually the {\em blocking polyhedron} of $\conv(R_f)$, i.e., $\conv(R_f) = \{s \in \R^k_+ \st \gamma\cdot s \geq 1 \textrm{ for all } \gamma \in \Gamma\}$ and $\Gamma = \{\gamma\in \R^k_+ \st \gamma\cdot s \geq 1 \textrm{ for all } s \in \conv(R_f)\}$. Hence, by Theorem 9.2\,(iv) in~\cite{sch}, the facets of $\conv(R_f)$ are given by $\gamma\cdot s \geq 1$ where $\gamma$ is an extreme point of $\Gamma$. So we need to enumerate the extreme points of $\Gamma$. Moreover, if $\gamma$ is an extreme point of $\Gamma$, then there does not exist $\gamma' \in \Gamma$ such that $\gamma$ is dominated by $\gamma'$. By Propositions~\ref{prop:counting_type3}, \ref{prop:counting_splits}, \ref{prop:counting_type1}, \ref{prop:type2-abstractly} and \ref{prop:counting_quads}, the extreme points of $\Gamma$ can only be in $\Xi_0\cup\Xi_1\cup\Xi_2\cup\Xi_3\cup\Xi_4$, whose cardinalities are bounded above by a polynomial in the binary encoding sizes of $f, r^1, \ldots, r^k$.\end{proof}

\begin{proof}[Proof of Theorem~\ref{thm:enumerate}]
As established in the proof of Theorem~\ref{THM:POLYFACETS}, we only need to search in the set $\Xi_0\cup\Xi_1\cup\Xi_2\cup\Xi_3\cup\Xi_4$ to find the facet-defining inequalities. The conditions defining these five sets and the counting arguments presented in Section~\ref{sec:finiteness} and Proposition~\ref{prop:counting_quads} can be converted into an algorithm for enumerating all the points in $\Xi_0\cup\Xi_1\cup\Xi_2\cup\Xi_3\cup\Xi_4$. 
This relies on the algorithm by Hartmann~\cite{hartmann-1989-thesis} for
computing the facets and vertices of integer hulls, which runs in polynomial
time in fixed dimension (cf.~Lemma \ref{rem:ray_cone}).
Thus we  generate a set of valid inequalities (of polynomial cardinality)
that is a superset of all the facets. We can then use standard LP techniques
to select the facet-defining ones from these. 
\end{proof}

\section{Conclusion}
We conclude this paper with a discussion of some interesting open problems.

\paragraph{Generalized triangle closures.}
A drawback of the triangle closure result presented in this paper is that it only applies to a system with two integer variables. Here is one way to generalize to general mixed integer linear programs. Consider the polyhedron $C = \{(x,y) \in \R^p \times\R^q : Px + Qy \leq d\}$ for some matrices $P \in \R^{m \times p}, Q\in\R^{m \times q}$. We are interested in the mixed integer hull $C_\IH = \conv(C \cap (\Z^p \times \R^q))$. We define the generalized triangle closure in the following way. Consider any two-dimensional lattice subspace of $\Z^p$ and consider a lattice-free triangle in this subspace. Let $\mathcal{T}$ be the family of all such triangles from all possible two-dimensional lattice subspaces. For any $T\in \mathcal{T}$, let $b^1, \ldots, b^p$ be a lattice basis for $\Z^p$ such that $b^1$ and $b^2$ are a basis for the lattice subspace containing $T$. Let $L(T)$ be the linear subspace spanned by $b^3, \ldots, b^p$. Then $R(T) = (T \oplus L(T)) \times \R^q$ is a polyhedron which contains no point from $\Z^p \times \R^q$ in its interior. We define the {\em generalized triangle closure} as $$\bigcap_{T \in \mathcal{T}} \conv(C \setminus \intr(R(T))).$$ We would like to show that the generalized triangle closure is also a polyhedron.


\paragraph{Quadrilateral closures.}
Even for the case $m=2$, one can ask about the quadrilateral closure. If
  one considers this to be the intersection of inequalities derived from all
  possible lattice-free quadrilaterals (and not just the maximal ones), then
  this convex set can be seen to be the same as the mixed integer hull
  $\conv(R_f)$, since every maximal lattice-free triangle and split can be
  arbitrarily well approximated by a quadrilateral (but not necessarily a
  maximal one). Of course, $\conv(R_f)$ is known to be a polyhedron, and so
  the question for the quadrilateral closure becomes interesting only if we
  restrict ourselves to the maximal quadrilaterals. We conjecture that this
  is also a polyhedron, but it does not seem to be an immediate corollary of
  the results of this paper. 

On a related note, we mention that in this paper, we included non-maximal
triangles to define the triangle closure. However, one can see that if we
restrict ourselves to only maximal triangles, we would obtain the same convex
set. This is because an inequality derived from a non-maximal triangle 
is equal to or dominated by one derived from a maximal triangle
or a split, and a split can be obtained as the limit of maximal triangles. 
In this respect, the quadrilateral closure differs from the triangle
closure: an inequality derived from a non-maximal quadrilateral
may not be equal to or dominated by an inequality from a maximal
quadrilateral or a limit of such inequalities.

\paragraph{Higher dimensions.}
Many of the tools described in Subsection 3.1 readily extend to $m\geq3$.
This can be used to study the extremality of inequalities arising from maximal
lattice-free convex sets in higher dimension.  Unfortunately, it is unclear
what results can be obtained in higher dimensions due to the difficult task of
first classifying all maximal lattice-free convex sets in higher dimensions.
Such a classification for $m=3$ is only partially known~\cite{averkov2011} while even less is known
for $m>3$.  That said, the tools given in this paper may be found useful for
studying specific classses of lattice-free convex sets in higher dimensions
such as simplices or cross-polytopes.  Studying such classes of inequalities
may produce stronger valid inequalities simply because more rows of the
simplex tableau are utilized.

\appendix
{\small
\section{Appendix: Uniqueness of a triangle defined by 3 corner rays and a point on the
  relative interior of each facet}

\begin{proposition}\label{prop:unique-triangle}
Any triangle defined by 3 corner rays and 3 points (one on the relative interior of each facet) is uniquely defined. 
\end{proposition}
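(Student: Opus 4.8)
The plan is to reduce the statement to a small linear-algebra fact. I set up coordinates relative to~$f$: since the three given corner rays $r^{j_1},r^{j_2},r^{j_3}$ point from~$f$ to the three vertices of the triangle (one to each vertex), each vertex has the form $v^i = f + t_i\, r^{j_i}$ for a scalar $t_i>0$, and the triangle is completely determined once $t_1,t_2,t_3$ are known. First I would observe that no two of the rays are parallel: if $r^{j_1}\parallel r^{j_2}$ then $f,v^1,v^2$ would be collinear, so $f$ would lie on the supporting line $\aff([v^1,v^2])$ of the triangle, contradicting that $f$ lies in its interior. Hence each of the pairs $\{r^{j_1},r^{j_2}\}$, $\{r^{j_2},r^{j_3}\}$, $\{r^{j_1},r^{j_3}\}$ is a basis of $\R^2$.

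Next I would turn the ``point on the relative interior of a facet'' conditions into equations. Consider the facet $[v^1,v^2]$ together with its prescribed interior point~$q$. Writing $q=\mu v^1+(1-\mu)v^2$ with $\mu\in(0,1)$ and subtracting~$f$ gives $q-f=(\mu t_1)\,r^{j_1}+((1-\mu)t_2)\,r^{j_2}$. On the other hand, expanding $q-f$ in the basis $\{r^{j_1},r^{j_2}\}$ yields fixed coefficients $\beta_1,\beta_2$ that depend only on the input data $f,r^{j_1},r^{j_2},q$; comparing coefficients gives $\beta_1=\mu t_1>0$ and $\beta_2=(1-\mu)t_2>0$, hence $\beta_1/t_1+\beta_2/t_2=1$. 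The crucial observation is that this becomes \emph{linear} in the reciprocals $u_i:=1/t_i$. Doing the same for all three facets produces three linear equations
\[
\beta^{12}_1 u_1+\beta^{12}_2 u_2=1,\qquad \beta^{23}_2 u_2+\beta^{23}_3 u_3=1,\qquad \beta^{13}_1 u_1+\beta^{13}_3 u_3=1,
\]
whose coefficients are the basis-expansion coefficients of $q^{12}-f$, $q^{23}-f$, $q^{13}-f$ (here $q^{12},q^{23},q^{13}$ denote the three prescribed points) and are therefore determined by the data; moreover all six of them are strictly positive, precisely because each prescribed point lies in the \emph{relative} interior of its facet, so each corresponding $\mu$ lies strictly in $(0,1)$.

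Finally I would read off uniqueness from the coefficient matrix. It has the cyclic zero pattern
\[
A=\begin{pmatrix} \beta^{12}_1 & \beta^{12}_2 & 0 \\ 0 & \beta^{23}_2 & \beta^{23}_3 \\ \beta^{13}_1 & 0 & \beta^{13}_3 \end{pmatrix},
\]
and expanding along the first row gives $\det A=\beta^{12}_1\beta^{23}_2\beta^{13}_3+\beta^{12}_2\beta^{23}_3\beta^{13}_1$, a sum of two products of positive numbers, hence strictly positive. So $Au=\mathbf 1$ has a unique solution~$u$, which determines $t_i=1/u_i$, the vertices $v^i=f+t_i r^{j_i}$, and thus the triangle. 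I expect the only real subtlety --- the main obstacle --- to be recognizing the right parametrization: the naive system in the vertex parameters $t_i$ is bilinear and a priori could admit several positive solutions, and the point is both to linearize it by passing to $1/t_i$ and to extract the sign information that forces the $3\times 3$ determinant to be nonzero. Everything else (non-parallelism of the rays, recovering the triangle from its three vertices) is routine.
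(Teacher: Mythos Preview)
Your argument is correct and takes a genuinely different route from the paper's. The paper parametrizes the triangle by the three facet normals $a^1,a^2,a^3\in\R^2$ (the rows of~$B$), yielding a $6\times 6$ linear system combining the three point-on-facet equations $a^i\cdot(y^i-f)=1$ with the three corner-ray constraints $a^i\cdot \bar p^{i+1}=a^{i+1}\cdot\bar p^{i+1}$; it then performs row reduction using the convex-combination expressions of the $\bar y^i$ in terms of the $\bar p^i$ to bring the matrix into block-triangular form and verify the determinant is nonzero. You instead parametrize by the three vertex positions along the given rays, so the corner-ray constraints are built in, and the substitution $u_i=1/t_i$ linearizes the remaining three point-on-facet conditions into a $3\times 3$ system whose cyclic zero pattern makes the determinant a sum of two positive terms. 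Your approach is more elementary and self-contained; the paper's has the advantage of staying inside the tilting-space framework used throughout Section~\ref{sec:finiteness}, so the same $6\times6$ (or $8\times8$) block structure reappears in the quadrilateral analysis of Appendix~\ref{sec:ratio_cond}.
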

\begin{proof}
The space of  triangles with these three corner rays and 3 points is exactly the tilting space
of any such triangle satisfying this.  For convenience we define $\y^i:= y^i - f$
and $\shiftedp^i := p^i - f$, where $p^i
$ are the ray intersections.  Then $\shiftedp^i = 
\frac{1}{\psi_B(r^i)} r^i$. 

We want to show that the solution to the following systems of equations is unique.
\begin{equation*}
	\begin{array}{ccc}
		\begin{array}{rl}
		 	a^1\cdot \y^1  &= 1            \\			
			 a^1\cdot \shiftedp^2  &=a^2\cdot  \shiftedp^2   \\
			a^2\cdot  \y^2  &= 1            \\
			a^2\cdot  \shiftedp^3  &= a^3\cdot  \shiftedp^3  \\
			a^3\cdot  \y^3  &= 1            \\
			a^3\cdot  \shiftedp^1  &=a^1\cdot  \shiftedp^1 
		\end{array}
	& \Rightarrow &
	\begin{bmatrix}
		\y^1                         \\
		 \shiftedp^2 & -\shiftedp^2              \\
			 & \y^2               \\
			 & \shiftedp^3 & - \shiftedp^3    \\
			 &       & \y^3      \\
		-\shiftedp^1 &       & \shiftedp^1
	\end{bmatrix}
	\begin{bmatrix}
		 a^1 \\ a^2 \\ a^3
	\end{bmatrix}
	= 
	\begin{bmatrix}
		1\\0\\1\\0\\1\\0
	\end{bmatrix}
\end{array}
\end{equation*}
We then write this down as a matrix equation where every vector in the matrix is a row vector of size 2, therefore we have a $6 \times 6$ matrix.  We will analyze the determinant of the matrix.

Since the points $\y^1, \y^2, \y^3$ are on the interior of each facet, they can be written as convex combinations of $\shiftedp^1, \shiftedp^2, \shiftedp^3$.

\begin{equation*}
\begin{array}{ccc}
\y^1 = \frac{1}{\alpha'} \shiftedp^1 + \frac{\alpha}{\alpha'} \shiftedp^2 &  & \shiftedp^1 = \alpha' \y^1 - \alpha \shiftedp^2\\
\y^2  =  \frac{1}{\beta'} \shiftedp^2 + \frac{\beta}{\beta'}\shiftedp^3&  \Rightarrow & \shiftedp^2 = \beta' \y^2 - \beta \shiftedp^3\\
\y^3 =  \frac{1}{\gamma'} \shiftedp^3 +\frac{\gamma}{\gamma'} \shiftedp^1&  & \shiftedp^3 = \gamma' \y^3 - \gamma \shiftedp^1
\end{array}
\end{equation*}

Therefore, we can perform row reduction on the last row.  Just tracking the last row, we have
$$
\begin{bmatrix}
-\shiftedp^1 & 0 & \shiftedp^1
\end{bmatrix}\rightarrow
\begin{bmatrix}
0 & \alpha \shiftedp^2 & \shiftedp^1
\end{bmatrix}
\rightarrow
\begin{bmatrix}
0& 0& \shiftedp^1 - \alpha \beta \shiftedp^3
\end{bmatrix}.
$$

This matrix now has an upper block triangular form, and the determinant is easily computed as 
$$
\det (\y^1; \shiftedp^2) \det(\y^2; \shiftedp^3) \det(\y^3; \shiftedp^1 - \alpha \beta \shiftedp^3).
$$
The first two determinants are non-zero because those vectors are linearly independent.  The last determinant requires some work:
$$
\begin{bmatrix}
\y^3\\
 \shiftedp^1 - \alpha \beta \shiftedp^3
\end{bmatrix} =
 \begin{bmatrix}
\frac{1}{\gamma'} \shiftedp^3 +\frac{\gamma}{\gamma'} \shiftedp^1\\
 \shiftedp^1 - \alpha \beta \shiftedp^3
\end{bmatrix}
= 
\begin{bmatrix}
\frac{\gamma}{\gamma'} & \frac{1}{\gamma'} \\
1 & -\alpha \beta
\end{bmatrix}
\begin{bmatrix}
\shiftedp^1\\
\shiftedp^3
\end{bmatrix}.
$$
Since all the coefficients are positive, the determinant of the first matrix is strictly negative, and since $\shiftedp^1, \shiftedp^3$ are linearly independent, the determinant of the second matrix is non-zero.\smallbreak

Hence, the determinant of the original matrix is non-zero, and therefore the system of equations has a unique solution.
\end{proof}

\section{Appendix: Ratio condition for quadrilaterals}\label{sec:ratio_cond}

\begin{lemma}
\label{lemma:ratio_condition}
Suppose $\M(B)$ is a quadrilateral with four corner
rays~$r^{j_1},r^{j_2},r^{j_3},r^{j_4}$.  
The following are equivalent:
\begin{enumerate}[\rm(i)]
\item The ratio condition holds.
\item $\dim \mathcal N(\Y) \neq 0$.
\item The quadrilateral is uniquely defined by these corner rays and the integer points lying on the
  boundary.
\end{enumerate}

\end{lemma}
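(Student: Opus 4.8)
The plan is to prove the three equivalences by showing (ii) $\Leftrightarrow$ (iii) almost tautologically from the definition of the tilting space, and then (i) $\Leftrightarrow$ (ii) by an explicit determinant computation analogous to the one in Proposition~\ref{prop:unique-triangle}. First, for (ii) $\Leftrightarrow$ (iii): with the covering $\Y = (Y_1,Y_2,Y_3,Y_4)$ where $Y_i = \{y^i\}$ is the single boundary integer point on $F_i$, and with $R' \supseteq R$ chosen to contain all four corner rays, the space $\mathcal N(B,\Y,R')$ is precisely the linearization of the set of quadrilaterals sharing these four corner rays and these four boundary integer points. Since $B$ itself lies in the corresponding affine space, that affine space is a single point (the quadrilateral is uniquely determined) if and only if $\mathcal N(B,\Y,R') = \{0\}$. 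One has to be slightly careful that $\dim\mathcal N(\Y)$ in the statement refers to this null space with all four corner rays present; I would state this as the standing convention for the lemma, matching how it is invoked in Proposition~\ref{prop:counting_quads}.

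Next, for (i) $\Leftrightarrow$ (ii): I would write down the defining linear system for $\mathcal N(B,\Y,R')$ explicitly. Writing $\y^i = y^i - f$ and $\shiftedp^i = p^{j_i} - f = \tfrac{1}{\psi_B(r^{j_i})}r^{j_i}$, and assuming the corner rays are labeled counter-clockwise so that $r^{j_i}$ points to the vertex $F_i \cap F_{i+1}$ (indices mod $4$), the system on $A = (a^1;a^2;a^3;a^4)$ is
\begin{equation*}
a^i \cdot \y^i = 0 \quad (i=1,\dots,4), \qquad a^i \cdot \shiftedp^i = a^{i+1}\cdot \shiftedp^i \quad (i=1,\dots,4).
\end{equation*}
This is an $8 \times 8$ system (each equation is scalar, each $a^i \in \R^2$). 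As in the triangle case, I would use that $\y^i$ lies in the relative interior of $F_i = [\,\shiftedp^{i-1}+f,\ \shiftedp^i+f\,]$ to write $\y^i$ as a strictly positive combination of $\shiftedp^{i-1}$ and $\shiftedp^i$, and hence write $\shiftedp^i$ in terms of $\y^i$ and $\shiftedp^{i-1}$. Substituting these and performing the same kind of cyclic row reduction as in Proposition~\ref{prop:unique-triangle} collapses the matrix to block-triangular form; the determinant factors into a product of $2\times 2$ determinants $\det(\y^i;\shiftedp^i)$ (all nonzero by linear independence) times one final $2\times 2$ determinant whose entries are the ratio parameters. This last determinant vanishes exactly when the product of the four ratios $\|y^i - p^{j_i}\|_2 / \|y^i - p^{j_{i+1}}\|_2$ around the quadrilateral equals $1$ in the alternating sense of \eqref{eq:ratio-cond}, i.e., exactly when there is a $t>0$ satisfying the ratio condition. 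So $\dim \mathcal N(\Y) \neq 0 \iff$ the ratio condition holds.

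The main obstacle I expect is the bookkeeping in the cyclic substitution and row reduction: unlike the triangle case, there are four facets, so the "wrap-around" of the substitutions $\shiftedp^i = \beta_i' \y^i - \beta_i \shiftedp^{i-1}$ produces a coefficient that is a product of four ratio parameters rather than two, and one must track signs carefully to see that the vanishing condition is precisely the alternating form in \eqref{eq:ratio-cond} (with the $t$ and $1/t$ pattern) rather than, say, all four ratios equal. I would double-check the sign pattern against the picture in Figure~\ref{fig:ratio-cond}, where the degenerate case is drawn, to make sure the orientation conventions line up. The linear-independence facts needed ($\y^i \not\parallel \shiftedp^{i-1}$, $\y^i \not\parallel \shiftedp^i$, and consecutive $\shiftedp$'s independent) all follow because $y^i$ is in the relative interior of its facet and the corner rays point to distinct vertices, exactly as in the proof of Lemma~\ref{lemma:corner_rays_almost}.
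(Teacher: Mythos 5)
Your overall architecture matches the paper's proof: (ii)~$\Leftrightarrow$~(iii) is indeed near-tautological from the definition of the tilting space, and (i)~$\Leftrightarrow$~(ii) is handled by writing down the $8\times 8$ linear system defining $\mathcal N(\Y)$, substituting the expressions for $\y^i$ as convex combinations of $\shiftedp^i$ and $\shiftedp^{i+1}$, cyclically row-reducing the final block row, and reading off when the block-triangular matrix is singular. The bookkeeping concern you flag is real, and the paper handles it exactly as you anticipate. However, there is a genuine gap in your sketch of the determinant step that is not merely bookkeeping.

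The row reduction produces a singularity condition of the form $\alpha\beta\gamma\delta = 1$, where $\alpha,\beta,\gamma,\delta$ are the four edge ratios. But the failure of the ratio condition means something strictly stronger: that a single $t>0$ exists with the ratios equal to $t, 1/t, t, 1/t$, i.e., $\alpha = 1/\beta = \gamma = 1/\delta$. You write that the determinant "vanishes exactly when the product of the four ratios ... equals $1$ in the alternating sense ... i.e., exactly when there is a $t>0$ satisfying" \eqref{eq:ratio-cond}. This glosses over the non-obvious implication $\alpha\beta\gamma\delta = 1 \Rightarrow \alpha = 1/\beta = \gamma = 1/\delta$; for generic positive numbers with product $1$ this is simply false (e.g., $\alpha=2,\beta=\tfrac12,\gamma=3,\delta=\tfrac13$). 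The paper closes this gap by invoking the parallelogram structure of the four boundary integer points of a maximal lattice-free quadrilateral (Theorem~\ref{mlfcb2}(c)): since $\y^1 + \y^3 = \y^2 + \y^4$, writing $\bar Y = \bar X S$ for the matrix $S$ of ratio coefficients forces $\bar X S u = 0$ with $u=(1;-1;1;-1)$, and a rank argument borrowed from Cornu\'ejols--Margot shows $Su=0$, i.e., $\alpha = 1/\beta = \gamma = 1/\delta$. Without some such use of the parallelogram constraint your argument only establishes the weaker equivalence with $\alpha\beta\gamma\delta=1$, not with the ratio condition itself. (Separately, your final displayed equivalence "$\dim\mathcal N(\Y)\neq 0 \iff$ the ratio condition holds" contradicts the chain of reasoning immediately preceding it; the paper's own proof concludes with $\dim\mathcal N(\Y)=0$ iff the ratio condition holds, which is what is used in Proposition~\ref{prop:counting_quads}, so you should double-check the sign of part (ii) rather than carry it forward from the lemma statement.)
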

\begin{proof} 
We will first analyze the tilting space equations with four corner rays, and then apply the assumption that the ratio condition does not hold.  Label the integer points on the facets of $M(B)$ such that $y^i \in [p^{j_i}, p^{j_{i+1}}]$ for $i=1,2,3,4$ and $j_5 = j_1$, as in Figure~\ref{fig:ratio-cond}.
For convenience we define $\y^i:= y^i - f$ and $\shiftedp^{i} := p^{j_i} - f$. Then $\shiftedp^i = 
\frac{1}{\psi_B(r^{j_i})} r^{j_i}$. 

We want to determine when there is not a unique solution to the following system of equations that come from the tilting space:
\begin{equation*}
\begin{array}{ccc}
\begin{array}{r@{\;}l}
a^1 \cdot \y^1&= 1\\
a^1 \cdot \shiftedp^2&= a^2\cdot  \shiftedp^2 \\
a^2\cdot \y^2  &= 1\\
a^2\cdot \shiftedp^3  &= a^3\cdot \shiftedp^3 \\
a^3\cdot \y^3  &= 1\\
a^3\cdot \shiftedp^4  &=a^4\cdot  \shiftedp^4 \\
a^4 \cdot \y^4 &= 1\\
a^4\cdot \shiftedp^1  &=a^1\cdot  \shiftedp^1 
\end{array}
&
\quad\text{or}\quad
&
\begin{bmatrix}
\y^1 \\
\shiftedp^2 & -\shiftedp^2 \\
& \y^2\\
& \shiftedp^3 & - \shiftedp^3 \\
& & \y^3\\
 & & \shiftedp^4 & -\shiftedp^4\\
 & & & \y^4\\
 - \shiftedp^1 & & & \shiftedp^1
\end{bmatrix}
\begin{bmatrix}
a^1\\ a^2 \\ a^3\\a^4
\end{bmatrix}
= 
\begin{bmatrix}
1\\0\\1\\0\\1\\0\\1\\0
\end{bmatrix}
\end{array}
\end{equation*}
as an $8 \times 8$  matrix equation where every vector shown in the matrix is a row vector of
size 2.  We will analyze the determinant of the matrix.

Since the points $\y^1, \y^2, \y^3, \y^4$ are on the interior of each facet,
they can be written as certain convex combinations of $\shiftedp^1, \shiftedp^2, \shiftedp^3, \shiftedp^4$.  We write this in a complicated form at first to simplify resulting calculations.  Here, $\alpha' = 1 + \alpha$, and $\alpha >0$, and similarly for $\beta, \gamma$, and~$\delta$.
$$
\begin{array}{r@{\;}lcr@{\;}l}
\y^1 &= \frac{1}{\alpha'} \shiftedp^1 + \frac{\alpha}{\alpha'} \shiftedp^2 & &\shiftedp^1 &= \alpha' \y^1 - \alpha \shiftedp^2\\
\y^2 &= \frac{1}{\beta'} \shiftedp^2 + \frac{\beta}{\beta'} \shiftedp^3 &\Leftrightarrow &\shiftedp^2 &= \beta' \y^2 - \beta \shiftedp^3\\
\y^3 &= \frac{1}{\gamma'} \shiftedp^3 + \frac{\gamma}{\gamma'} \shiftedp^4 & &\shiftedp^3 &= \gamma' \y^3 - \gamma \shiftedp^4\\
\y^4 &=\frac{1}{\delta'} \shiftedp^4 + \frac{\delta}{\delta'} \shiftedp^1 & &\shiftedp^4 &= \delta' \y^4 - \delta \shiftedp^1
\end{array}
$$
Now just changing the last row using the above columns
$$
\begin{bmatrix}
 - \shiftedp^1 & 0& 0& \shiftedp^1
\end{bmatrix} \rightarrow
\begin{bmatrix}
  0&\alpha \shiftedp^2 &0 & \shiftedp^1
\end{bmatrix} \rightarrow
\begin{bmatrix}
 0 &0 &-\alpha \beta \shiftedp^3 & \shiftedp^1
\end{bmatrix} \rightarrow
\begin{bmatrix}
0  &0 &0 & \alpha \beta \gamma \shiftedp^4 + \shiftedp^1 
\end{bmatrix}
$$
The resulting matrix, after adding this last row and substituting in $\y^4$, is

$$
\left[
\begin{array}{cccc}
\cellcolor{lgray} \y^1  & & &\\
\cellcolor{lgray}\shiftedp^2 & -\shiftedp^2 & &\\
& \cellcolor{lgray}\y^2 & & \\
& \cellcolor{lgray}\shiftedp^3 & - \shiftedp^3 & \\
& & \cellcolor{lgray}\y^3 &\\
 & & \cellcolor{lgray}\shiftedp^4 & -\shiftedp^4\\
 & & & \cellcolor{lgray}\frac{1}{\delta'} \shiftedp^4 + \frac{\delta}{\delta'} \shiftedp^1\\
  & & & \cellcolor{lgray} \alpha \beta \gamma \shiftedp^4 + \shiftedp^1 
\end{array}
\right]
$$

This is now an upper block triangular matrix, which is emphasized by shading.  The first three blocks are all non-singular, and the last block is non-singular if and only if there does not exist a $t$ such that 
$$
  \frac{1}{\delta'} \shiftedp^4 + \frac{\delta}{\delta'} \shiftedp^1 =  t(\alpha \beta \gamma \shiftedp^4 + \shiftedp^1 )\ \Rightarrow \ \Big(\frac{\delta}{\delta'} - t\Big) \shiftedp^1 + \Big(\frac{1}{\delta'} - t \alpha \beta \gamma\Big)\shiftedp^4 = 0.
$$
If such a $t$ exists, then $t = \frac{\delta}{\delta'}$ since $\shiftedp^1$ and $\shiftedp^4$ are linearly independent.    It follows that $\alpha \beta \gamma \delta = 1$ if and only if $\dim \mathcal N(\Y) \neq 0$. 

It is easy to see that the ratio condition does not hold if and only if $\alpha= \frac{1}{\beta} = \gamma  = \frac{1}{\delta}$.
Therefore, it remains to show that $\alpha= \frac{1}{\beta} = \gamma  = \frac{1}{\delta}$ if and only if $\alpha \beta \gamma \delta = 1$.  The forward direction is obvious.  We will show that if it is not true that $
\alpha = \frac{1}{\beta} = \gamma = \frac{1}{\delta}
$, then $\alpha \beta \gamma \delta \neq 1$.
To do so, we use ideas from the proof by Cornu\'ejols and Margot of Theorem 3.10 in \cite{cm}.

Let $Y, X \in \R^{2 \times 4}$ be the matrices  with columns $\bar y^1, \bar
y^2, \bar y^3, \bar y^4$ and $\bar p^1, \bar p^2, \bar p^3, \bar p^4$,
respectively.  Next let $\bar Y, \bar X \in \R^{3 \times 4}$ be the matrices $Y$ and $X$, respectively, after adding a row of ones at the bottom.   Observe that $\rank(\bar Y) = \rank(\bar X) = 3$.

Define the matrix $S$ of coefficients as
$$
S =   
\begin{bmatrix}
\frac{1}{\alpha'}&  & & \frac{\delta}{\delta'}\\
 \frac{\alpha}{\alpha'}  &\frac{1}{\beta'}&& \\
& \frac{\beta}{\beta'} & \frac{1}{\gamma'}  &\\
   &  & \frac{\gamma}{\gamma'}  &\frac{1}{\delta'} 
\end{bmatrix}
$$
 and note that $\bar Y = \bar X S$.  Since $y^1, \dots, y^4$ form a
 parallelogram by Theorem~\ref{mlfcb2}, $\bar y^1 + \bar y^3 = \bar y^2 + \bar
 y^4$, or equivalently,
$
\bar Y u = 0
$
where $u = (1;-1;1;-1)$.
This implies that
$
\bar Y u = \bar X S u = 0.
$
Since we assume that it is not true that $
\alpha = \frac{1}{\beta} = \gamma = \frac{1}{\delta},
$ it follows that $Su \neq 0$.  Therefore, $Su \in \ker  (\bar X) \setminus
\{0\}$, where $\ker$ denotes the kernel.   Some simple linear algebra, made
more explicit in \cite[Lemma 3.5, proof of Theorem~3.10]{cm}, shows that $\ker(\bar X) \subseteq \im(S)$, where $\im(S)$ denotes the column space,   and that 
$$
\rank(\bar X S) = \rank(S) - \dim(\ker(\bar X) \cap \im(S))
$$
from which it follows that $\rank(S) = 4$.  Therefore, $\det(S) = d$ for some $d \neq 0$, where $
\det(S)  
= \frac{1 - \alpha \beta \gamma \delta}{\alpha' \beta ' \gamma' \delta'}
$.
 Therefore
$$
\alpha \beta \gamma \delta = 1 - d \alpha' \beta' \gamma' \delta'
$$
and since  $\alpha', \beta', \gamma', \delta' \neq 0$, we conclude that 
$\alpha \beta \gamma \delta \neq 1$.

We have shown that the ratio condition holds if and only if $\dim \mathcal N(\Y) = 0$.  Lastly, note that the set $\{\, M(B + \bar A) \st \bar A \in \mathcal N(\Y)\,\}$ is the set of all  quadrilaterals defined by the four corner rays $r^{j_1}, r^{j_2}, r^{j_3}, r^{j_4}$ and the four integer points $y^1, y^2, y^3, y^4$.  Hence, $\dim \mathcal N(\Y) = 0$ if and only if $M(B)$ is the unique quadrilateral defined by these corner rays and integer points.   
\end{proof} 

\scriptsize
\bibliography{../bib/MLFCB_bib}{}
\bibliographystyle{../amsabbrv}

\end{document}